\newtheorem{thm}{Theorem}[section]
\newtheorem{cor}[thm]{Corollary}
\newtheorem{lem}[thm]{Lemma}
\newtheorem{prop}[thm]{Proposition}
\theoremstyle{definition}
\theoremstyle{remark}
\newtheorem{rem}[thm]{Remark}
\numberwithin{equation}{section}
\newcommand{\be}{\begin{equation}}
\newcommand{\ee}{\end{equation}}
\newcommand{\comment}[1]{}
\begin{document}

\title[]{
Regularity properties for a class of \\  non-uniformly elliptic Isaacs operators
}

\author{Fausto Ferrari}
\address{Dipartimento di Matematica dell'Universit\`a di Bologna, \\
Piazza di Porta S. Donato, 5, 40126 Bologna, Italy.}
\email{\tt fausto.ferrari@unibo.it}
\author{Antonio Vitolo}
\address{\hskip-0.5cm Dipartimento di Ingegneria Civile, Universit\`a di Salerno, \\
Via Giovanni Paolo II, 132 
- 84084 Fisciano (SA), Italy,
 \\and\\
 Istituto Nazionale di Alta Matematica, INdAM - GNAMPA, Italy.}
\email{\tt vitolo@unisa.it} 

\keywords{Weighted partial trace operators. Bellman-Isaacs equations. Viscosity solutions. Global H\"older estimates} 

\thanks{F.F. is partially funded by INDAM-GNAMPA project 2018: {\it Costanti critiche e problemi asintotici per equazioni completamente non lineari} and INDAM-GNAMPA project 2019:  {\it Propriet\`a di regolarit\`a delle soluzioni viscose con applicazioni a problemi di frontiera libera.}}
{\begin{abstract} We consider the elliptic differential operator defined as the sum of the minimum and the maximum eigenvalue of the Hessian matrix, which can be viewed as a degenerate elliptic Isaacs operator, in dimension larger than two. Despite of  nonlinearity, degeneracy, non-concavity and non-convexity, such operator generally enjoys the qualitative properties of the Laplace operator, as for instance maximum and comparison principles, ABP and Harnack inequalities, Liouville theorems for subsolutions or supersolutions. Existence and uniqueness for the Dirichlet problem are also proved as well as local and global H\"older estimates for viscosity solutions. All results are discussed for a more general class of weighted partial trace operators.

\vskip0.2cm
\noindent \textbf{Keywords}. Weighted partial trace operators. Fully nonlinear elliptic equations. Viscosity solutions. H\"older estimates.

\vskip0.2cm
\noindent \textbf{2010 MSC}: 35J60, 35J70, 35J25, 35B50, 35B53, 35B65, 35D40.

\end{abstract}}

\maketitle

\section{Introduction and main results}\label{intro}

In this paper we investigate the properties of weighted partial trace operators 
\begin{equation}\label{Ma}
\mathcal M_{\bf a}:=\sum_{i=1}^{n}a_i\lambda_i(X)
\end{equation}
where $\lambda_i(X)$ are the eigenvalues  of $X\in \mathcal S^n$, the set  of $n \times n$ real symmetric matrices,  in increasing order, that is
\begin{equation}\label{eigenv}
\lambda_1(X) \le \dots \le \lambda_n(X),
\end{equation}
and ${\bf a}=(a_1,\dots,a_n)\in \mathbb R^n$ is a n-ple of non-negative coefficients $a_i$ such that $a_j>0$ for at least one $j \in\{1,\dots,n\}$.

The class $\overline{\mathcal A}$ of such operators includes the partial trace operators
\begin{equation}\label{ptrace}
\begin{split}
 \mathcal P_k^-(X)=\sum_{i=1}^k\lambda_i(X), \quad \mathcal P_k^+(X)= \sum_{i=n-k+1}^n\lambda_i(X),
\end{split}
\end{equation}
considered by Harvey-Lawson \cite{HL}, \cite{HL2} and Caffarelli-Li-Nirenberg \cite{CLN}, \cite{CLN2}.

Here we introduce the subclass $\mathcal A$, characterized by non-negative coefficients $a_i$ such that $a_1>0$ and $a_n>0$, which in some sense complements the set of operators $\mathcal P^\pm_k(X)$ with $k<n$. In fact, the  prototype of $\mathcal A$ is the min-max operator
$$
\mathcal M(X):=\lambda_1(X)+\lambda_n(X).
$$
As we will see later, $\mathcal M$ can be in fact  viewed as a degenerate elliptic Isaacs operator (for $n \ge 2$) whereas $\mathcal P^\pm_k(X)$ results in a degenerate elliptic Bellman operator (for $k<n$).

Of course, the case $n=2$ is by far well known, because $\mathcal M$ reduces to the classical Laplace operator. However, in higher dimension, namely for $n>2,$ the operator ceases to be uniformly elliptic, it becomes a fully nonlinear non-convex degenerate elliptic operator. Nonetheless, we will see,  rather surprisingly, that it retains many properties of the Laplace operator.

It also worth noticing that the operators $\mathcal M_{\bf a}$ of the smaller subclass  $\underline{\mathcal A}$,  characterized by weights $a_i>0$ for all $i=1,\dots,n$, are uniformly elliptic, as we will see in Section \ref{prelim}.

\vskip0.2cm

After introducing our main results we shall further come back to the original motivation for studying the operators of $\overline{\mathcal A}$, and in particular the subclass $\mathcal A$.

A good number of results will depend on the dimension $n$ and on the following two quantities, namely the minimum between the coefficients of the smallest and the greatest eigenvalue, and the arithmetic mean of the coefficients, namely
\begin{equation}\label{anumbers}
a^*:=\min(a_1,a_n), \quad \tilde a:= \frac{a_1+\dots+a_n}{n},
\end{equation}
in the sense that the involved constants are uniformly bounded when a positive upper bound of the first one and a finite upper bound of the latter one are avalaible. 

The constants which depend only on $n$, $a_1$, $a_n$ and $ \tilde a$ will be also called universal constants.

\vskip0.2cm

The following result is a revisitation of the bilateral Alexandroff-Bakelman-Pucci estimate (ABP) for the class $\mathcal{A}$, only depending on $n$ and $a^*$. 

\begin{thm}\label{ABP-two}
Let $\Omega\subset \mathbb{R}^n$ be a bounded open domain of diameter $d$. Let  $f\in C(\Omega)$ be bounded in $\Omega$.  If $u\in C(\bar{\Omega})$  is a viscosity solution to the equation $\mathcal M_{\bf a}(D^2u)=f$  in $\Omega$, with $\mathcal M_{\bf a} \in \mathcal A,$  then:
\begin{equation}\label{ABP-est-two}
\sup_{\Omega}|u|\leq \sup_{\partial\Omega}u^+ + \frac{C_n}{a^*}\,d\,\|f\|_{L^n(\Omega)},
\end{equation}
where $C_n>0$ is a positive constant depending only on $n$.
\end{thm}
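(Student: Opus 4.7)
The plan is to adapt the classical Alexandroff--Bakelman--Pucci argument (as in Caffarelli--Cabr\'e), with the observation that on the upper contact set the ordering of the eigenvalues combined with the sole positivity of $a_1$ and $a_n$ (through $a^*$) is already enough to control the determinant of the Hessian, even though $\mathcal M_{\bf a}$ need not be uniformly elliptic.

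First I would reduce to a one-sided estimate. Since $\lambda_i(-X)=-\lambda_{n-i+1}(X)$, we have $\mathcal M_{\bf a}(-X)=-\mathcal M_{\bar{\bf a}}(X)$ with $\bar{\bf a}=(a_n,\dots,a_1)\in \mathcal A$ and $\bar a^*=a^*$. Thus it suffices to prove
$$\sup_\Omega u \leq \sup_{\partial\Omega}u^+ + \frac{C_n}{a^*}\,d\,\|f\|_{L^n(\Omega)},$$
since applying the same bound to $-u$ (which solves $\mathcal M_{\bar{\bf a}}(D^2(-u))=-f$) will control $\inf_\Omega u$ and yield the two-sided statement.

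Next I would run the concave-envelope machinery. After subtracting $\sup_{\partial\Omega}u^+$, assume $u\leq 0$ on $\partial\Omega$. Let $\Gamma$ be the concave envelope of $u^+$ extended by zero to a ball $B_d\supset\Omega$ of radius $d$; by Aleksandrov's theorem $\Gamma$ is twice differentiable a.e.~on the upper contact set $E=\{u=\Gamma\}$, $D^2\Gamma\leq 0$, and the viscosity subsolution property $\mathcal M_{\bf a}(D^2u)\geq -|f|$ transfers to $\Gamma$ in the form $D^2 u \leq D^2\Gamma \leq 0$ at a.e.~contact point (with the usual viscosity interpretation of $D^2 u$). The decisive pointwise computation is then: at such a point one has $\lambda_1(D^2u)\leq\cdots\leq\lambda_n(D^2u)\leq 0$, and
$$\sum_{i=1}^n a_i\,|\lambda_i(D^2u)| = -\mathcal M_{\bf a}(D^2u)\leq |f|,$$
so in particular $a^*|\lambda_1(D^2u)|\leq a_1|\lambda_1(D^2u)|\leq |f|$. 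Because all eigenvalues are non-positive and ordered, $|\lambda_1|$ dominates the others, and the matrix inequality $D^2u\leq D^2\Gamma\leq 0$ gives $|\lambda_i(D^2\Gamma)|\leq |\lambda_i(D^2u)|\leq |\lambda_1(D^2u)|\leq |f|/a^*$, hence
$$\det(-D^2\Gamma(x))=\prod_{i=1}^n|\lambda_i(D^2\Gamma(x))|\leq \bigl(|f(x)|/a^*\bigr)^n\qquad\text{a.e.~on }E.$$

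The proof is then closed by the classical area-formula step: the image $D\Gamma(E)$ of the contact set under the (Lipschitz) gradient of $\Gamma$ contains the ball of radius $M/d$, where $M:=\sup_\Omega u-\sup_{\partial\Omega}u^+$, so
$$\omega_n\Bigl(\frac{M}{d}\Bigr)^n\leq |D\Gamma(E)|\leq \int_E\det(-D^2\Gamma)\,dx\leq \frac{1}{(a^*)^n}\int_\Omega |f|^n\,dx,$$
and taking $n$-th roots yields the stated estimate with $C_n=\omega_n^{-1/n}$. The only step that is not entirely routine is the pointwise determinant estimate: the main obstacle is replacing the usual uniform-ellipticity argument (which would bound every $|\lambda_i|$ directly) by the observation that the monotone ordering of the eigenvalues on the contact set transfers the lower bound $a^*$ on just the extreme coefficient $a_1$ into a bound on the largest (in absolute value) eigenvalue, which then controls all the others.
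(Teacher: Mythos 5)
Your strategy --- reduce to a one-sided bound by duality, run the concave-envelope (Alexandroff) argument, and close with the determinant estimate coming from the eigenvalue ordering on the upper contact set --- is in all essentials the paper's own. The duality step via $\bar{\bf a}=(a_n,\dots,a_1)$ with $\bar a^*=a^*$ is exactly what the paper calls the dual operator $\mathcal M_{{\bf a}'}$, and the pointwise computation at a contact point (all eigenvalues $\le 0$, ordered, so $a_1|\lambda_1|\le\sum_i a_i|\lambda_i|=-\mathcal M_{\bf a}\le|f|$, hence $|\det|\le|\lambda_1|^n\le(|f|/a^*)^n$) is precisely the content of Lemma~\ref{ABP-classic}.

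There is, however, a genuine gap in your passage from smooth functions to viscosity solutions. You write that ``by Aleksandrov's theorem $\Gamma$ is twice differentiable a.e.'' and then treat $D\Gamma$ as a Lipschitz map in the area-formula step, but neither Aleksandrov's theorem nor concavity alone makes $D\Gamma$ Lipschitz, and a.e.\ second differentiability is \emph{not} enough to conclude $|D\Gamma(E)|\le\int_E\det(-D^2\Gamma)\,dx$: for a merely concave $\Gamma$ the Monge--Amp\`ere measure $|\partial\Gamma(\cdot)|$ may carry a singular part on $E$, and this inequality can fail. What is needed is that the concave envelope is in fact $C^{1,1}$, and establishing this uses the equation: one must show a paraboloid of controlled opening touches $\Gamma$ from above at each contact point, which rests on a comparison against an explicit quadratic barrier. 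In the uniformly elliptic setting this is Lemma~3.5 of Caffarelli--Cabr\'e; here the barrier computation must be redone for the degenerate operator (the paper verifies it against $\hat a_n\lambda_1+a_n\lambda_n$, using only $a_n>0$), and this is exactly Lemma~\ref{C1,1}. That lemma --- not the determinant estimate you single out as ``the only step that is not entirely routine'' --- is where the real adaptation lies. Your proposal is otherwise sound but needs this ingredient (or an equivalent device, such as sup-convolution regularization) to close the viscosity case.
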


We emphasize the following difference between (\ref{ABP-est-two}) and the standard ABP estimates, see for instance \cite[Theorem 9.1]{GT}:  the denominator of the right-hand side is $a^*=\min(a_1,a_n)$  instead of the geometric mean $\mathcal D^*=(a_1\cdots a_n)^{1/n}$, the geometric mean of the coefficients, which would be useless  in the non-uniformly elliptic case, as soon as one of the $a_j$'s is zero,  while $a^*$ is positive for the class $\mathcal A$. 

The above result is obtained as a consequence of two unilateral ABP estimates for subsolutions (\ref{ABP-est-sub}) and supersolutions (\ref{ABP-est-super}).  

\vskip0.2cm

The ABP estimate stated before also underlies a corresponding Harnack inequality for the equation  $\mathcal M_{\bf a}[u]=f$, depending on $n$,  $a^*$ and $ \tilde a$, instead of the elliptic constants $\lambda$ and $\Lambda\ge \lambda$, which would be ineffective in the degenerate elliptic case in which $\lambda =0$. This Harnack inequality cannot be extended to arbitrary degenerate elliptic operators of the class $\overline{\mathcal A}$, and in particular it fails to hold for partial trace  equations $\mathcal P^\pm_k[u]=f$ when $k<n$. 

\begin{thm}\label{Harnack} {\rm(Harnack inequality)}  Let $\mathcal M_{\bf a}\in \mathcal A$. 
Let $u$ be a viscosity solution of the equation $\mathcal M_{\bf a}(D^2u)=f$ in the unit cube $Q_1$ such that $u \ge 0$ in $Q_1$, where $f$ is continuous and bounded. Then
\begin{equation}\label{harnack-ineq}
\sup_{Q_{1/2}} u \le C\left( \inf_{Q_{3/4}}u+ \|f\|_{L^n(Q_1)}\right),
\end{equation}
where $C$ is a positive constant depending only on $n$, $a^*$ and  $\tilde a$.
\end{thm}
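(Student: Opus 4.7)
The plan is to follow the Krylov--Safonov--Caffarelli--Cabr\'e scheme, based on three ingredients: (a) the ABP estimate in Theorem \ref{ABP-two}, or more precisely its one-sided versions for sub- and supersolutions; (b) an explicit radial barrier that is a classical (hence viscosity) subsolution of $\mathcal{M}_{\bf a}(D^2\phi)\ge 0$ outside a small ball; and (c) the Calder\'on--Zygmund cube-decomposition machinery, which converts a single pointwise-to-measure bound (a ``measure-shrinking lemma'') into polynomial decay of the distribution function of a nonnegative supersolution. This $L^{\varepsilon}$-type estimate, combined with a local maximum principle for subsolutions proved along the same lines, yields (\ref{harnack-ineq}) by a standard chain-of-balls argument passing from $Q_{3/4}$ to $Q_{1/2}$.

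The technical core is the barrier. For a radial function $\phi(x)=\Phi(|x|)$ the Hessian has a simple eigenvalue $\Phi''(r)$ along $x/|x|$ and the eigenvalue $\Phi'(r)/r$ with multiplicity $n-1$. Taking $\Phi(r)=r^{-\alpha}$ one gets $\Phi''(r)=\alpha(\alpha+1)r^{-\alpha-2}>0$ and $\Phi'(r)/r=-\alpha r^{-\alpha-2}<0$, so that $\lambda_n(D^2\phi)=\alpha(\alpha+1)r^{-\alpha-2}$ and $\lambda_1=\dots=\lambda_{n-1}=-\alpha r^{-\alpha-2}$, whence
\[
\mathcal{M}_{\bf a}(D^2\phi)=\alpha\, r^{-\alpha-2}\Bigl[a_n(\alpha+1)-\sum_{i=1}^{n-1}a_i\Bigr]=\alpha\, r^{-\alpha-2}\bigl[a_n\alpha+2a_n-n\tilde a\bigr],
\]
which is nonnegative as soon as $\alpha\ge n\tilde a/a^*-2$, a condition controlled by universal constants only. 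Following the classical recipe, by multiplying by a cutoff and adding a suitable affine function I would produce a $\psi\in C^2(\mathbb{R}^n\setminus\overline{B_{r_0}})$ which is $\le -2$ on $Q_3$, $\ge 0$ outside $B_{2\sqrt n}$, and satisfies $\mathcal{M}_{\bf a}(D^2\psi)\le g$ globally, for some bounded function $g$ supported in $B_{r_0}$.

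Applying the supersolution version of Theorem \ref{ABP-two} to $w:=u+\psi$, with $u\ge 0$ a viscosity supersolution of $\mathcal{M}_{\bf a}(D^2u)=f$ in $B_{2\sqrt n}$ satisfying $\inf_{Q_3}u\le 1$, the contact set $\{w=\Gamma_w\}$ is forced to carry universally controlled measure; since $D^2\psi$ is bounded below away from $B_{r_0}$, this translates into $|\{u\le M\}\cap Q_1|\ge\mu$ for universal $M,\mu>0$, provided $\|f\|_{L^n}$ is small (and otherwise after rescaling). This measure-shrinking lemma is then iterated on dyadic subcubes via the Calder\'on--Zygmund decomposition, producing the polynomial decay $|\{u>t\}\cap Q_1|\le C\,t^{-\varepsilon}$ for universal $C,\varepsilon>0$. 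The same ABP-plus-barrier scheme applied to subsolutions delivers a local maximum principle of the form $\sup_{Q_{1/2}}u\le C\bigl(\|u\|_{L^\varepsilon(Q_{3/4})}+\|f\|_{L^n(Q_1)}\bigr)$, and combining the two estimates gives (\ref{harnack-ineq}). The principal obstacle I anticipate is the bookkeeping required to certify that every constant in the measure-shrinking lemma, the Calder\'on--Zygmund iteration and the local maximum principle depends only on $n$, $a^*$ and $\tilde a$, and not on the intermediate coefficients $a_2,\dots,a_{n-1}$; the barrier computation makes it plausible that only $a^*$ (through ABP, controlling the extremal eigenvalues) and $\tilde a$ (bounding $\sum a_i$ from above) should enter, but checking that this dependence is preserved through the iteration is the delicate point.
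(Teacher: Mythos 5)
Your proposal reconstructs the Krylov--Safonov--Caffarelli--Cabr\'e machinery from scratch for the operator $\mathcal M_{\bf a}$, but there is a genuine gap at the step where you apply the ABP estimate to $w:=u+\psi$. For that application you need $w$ to be a viscosity supersolution of $\mathcal M_{\bf a}[w]=h$ for some controllable $h$. In the classical Pucci setting this works because $\mathcal M^-_{\lambda,\Lambda}(X+Y)\le\mathcal M^-_{\lambda,\Lambda}(X)+\mathcal M^+_{\lambda,\Lambda}(Y)$, a consequence of the $\inf/\sup$ representation of the extremal operators, so the barrier needs to satisfy $\mathcal M^+_{\lambda,\Lambda}(D^2\psi)\le g$. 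But $\mathcal M_{\bf a}$ is neither concave nor convex for $n\ge3$ (Remark in Subsection \ref{minmax3}), so there is no analogous subadditivity: from $\mathcal M_{\bf a}(D^2u)\le f$ and $\mathcal M_{\bf a}(D^2\psi)\le g$ you cannot infer $\mathcal M_{\bf a}(D^2(u+\psi))\le f+g$. The intermediate-eigenvalue terms $a_i\lambda_i(X+Y)$, $2\le i\le n-1$, do not split with a controlled sign, and the Weyl inequalities do not provide the direction you need. The barrier computation you give --- that $\mathcal M_{\bf a}(D^2|x|^{-\alpha})\ge 0$ for $\alpha\ge n\tilde a/a^*-2$ --- is correct, but it is an estimate of the wrong quantity for the argument to close.

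The paper sidesteps this entirely by observing, from the inequalities
$$\mathcal M^-_{a^*/n,\,|{\bf a}|}(X)\le \mathcal M_{\bf a}(X)\le \mathcal M^+_{a^*/n,\,|{\bf a}|}(X),$$
derived in (\ref{Ma-Pucci}), that any viscosity supersolution of $\mathcal M_{\bf a}[u]=f$ is automatically a viscosity supersolution of the uniformly elliptic Pucci equation $\mathcal M^-_{a^*/n,\,|{\bf a}|}[u]=f^+$, and likewise that subsolutions of $\mathcal M_{\bf a}[u]=f$ are subsolutions of $\mathcal M^+_{a^*/n,\,|{\bf a}|}[u]=-f^-$. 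The ellipticity constants $a^*/n$ and $|{\bf a}|=n\tilde a$ depend only on $n$, $a^*$, $\tilde a$, so the weak Harnack inequality and the local maximum principle of Caffarelli--Cabr\'e [Theorem 4.8] apply verbatim, and the Harnack inequality follows by the standard concatenation. This is the very reduction you would be forced to perform inside your measure-shrinking lemma anyway, but done once at the outset, turning a multi-page reconstruction into a two-line proof. If you insist on running the barrier argument for $\mathcal M_{\bf a}$ directly, the correct requirement on $\psi$ is $\mathcal M^+_{a^*/n,\,|{\bf a}|}(D^2\psi)\le g$, which forces the larger exponent $\alpha\ge n^2(n-1)\tilde a/a^*-1$, and you must use the Pucci ABP estimate (or note that the paper's ABP follows from it) for $w=u+\psi$, since that sum is only a supersolution in the Pucci sense.
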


We prove Theorem \ref{Harnack}
via two inequalities for subsolutions and non-negative supersolutions, known in literature respectively as the local maximum principle (Theorem \ref{lmp}) and the weak Harnack inequality (Theorem \ref{w-H}), suitably adapted to this framework, by comparison with Pucci extremal operators.

From the Harnack inequality, the interior $C^\alpha$ estimates of Theorem \ref{Holder} in Section \ref{harnack} follow with a universal exponent $\alpha\in(0,1)$, in the same way of the uniformly elliptic case \cite{CC}.

 Here, in Lemma \ref{Holder-bdary-lem},  we get  boundary H\"older   estimates assuming for $\Omega$ a uniform exterior sphere property, with radius $R>0$: 

(S) {\it for all $y\in \partial\Omega$ there is a ball $B_R$ of radius $R$ such that $y\in \partial B_R$ and $\overline\Omega\subset \overline B_R$}.

\vskip0.2cm

We obtain the following  estimates  for the H\"older  seminorm $[u]_{\gamma,\Omega}$. See the notation (\ref{betanorm}) in Section \ref{harnack}.

\begin{thm} \label{Holder-global} {\rm (global H\"older estimates)} Let $u\in C(\overline \Omega)$ be a viscosity solution of the equation $\mathcal M_{\bf a}(D^2u) =f$  in a bounded domain $\Omega$. 

We assume that with $\mathcal M_{\bf a}\in \mathcal A$ and  $f$ is continuous and bounded in $\Omega$. Let also $\alpha\in (0,1)$ be the exponent of the interior $C^\alpha$ estimates.

(i) Suppose that $\Omega$ satisfies a uniform exterior sphere condition (S) with radius $R>0$. If $u=g$ on $\partial \Omega$ with $g \in C^{\beta}(\partial \Omega)$ and $\beta \in (0,1]$, then $u\in C^{\gamma}(\overline \Omega)$ with $\gamma=\min(\alpha, \beta/2)$, and 
\begin{equation}\label{global-Holder1-intro}
[u]_{\gamma,\Omega} \le C\left(\|g\|_{C^{\beta}(\partial\Omega)}+\|f\|_{L^\infty(\Omega)}\right),
\end{equation}
where $C$ is a positive constants depending only on $n$, $a^*$,  $\tilde a_n$, $R$, $L$ and $\beta$.

(ii) Suppose in addition that $\Omega$ has a uniform Lipschitz boundary with Lipschitz constant $L$. If $g\in  C^{1,\beta}(\partial \Omega)$ with $\beta \in [0,1)$, then $u\in C^{(1+\beta)/2}(\overline \Omega)$, where $\gamma=\min(\alpha,(\beta+1)/2)$, and
\begin{equation}\label{global-Holder2-intro}
[u]_{(1+\beta)/2,\Omega} \le C\left(\|g\|_{C^{1,\beta}(\partial\Omega)}+\|f\|_{L^\infty(\Omega)}\right) .
\end{equation}
\end{thm}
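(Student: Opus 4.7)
The plan is to combine the interior $C^\alpha$ estimate of Theorem~\ref{Holder} with a pointwise boundary estimate (Lemma~\ref{Holder-bdary-lem} for part~(i); a refined version of it for part~(ii)) via the standard distance-to-boundary interpolation of Gilbarg--Trudinger type. Throughout, for $z\in\Omega$ I write $d_z=\mathrm{dist}(z,\partial\Omega)$ and let $\hat z\in\partial\Omega$ be a closest boundary point.

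For part~(i), set $M:=\|g\|_{C^\beta(\partial\Omega)}+\|f\|_{L^\infty(\Omega)}$ and $\gamma:=\min(\alpha,\beta/2)$. Lemma~\ref{Holder-bdary-lem} furnishes $|u(z)-g(\hat z)|\le C\,d_z^{\,\beta/2}M$ for every $z\in\Omega$. Fix $x,y\in\Omega$, $r:=|x-y|$, with $d_x\le d_y$. If $r\le d_x/2$, then $B_{d_x}(x)\subset\Omega$, and on that ball $d_z\le 2d_x$ and $|\hat z-\hat x|\le 4d_x$, so combining the boundary estimate with the $C^\beta$ regularity of $g$ gives $\|u-g(\hat x)\|_{L^\infty(B_{d_x}(x))}\le C\,d_x^{\,\beta/2}M$; the interior estimate of Theorem~\ref{Holder} applied to $u-g(\hat x)$ on $B_{d_x/2}(x)$ then yields
\begin{equation*}
|u(x)-u(y)|\le C\Bigl(\tfrac{r}{d_x}\Bigr)^{\!\alpha} d_x^{\,\beta/2}M \le C\,r^\gamma M,
\end{equation*}
the last bound following from $r\le d_x$ whether $\beta/2\le\alpha$ or $\beta/2\ge\alpha$. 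If instead $r>d_x/2$, then $d_y\le 3r$ and $|\hat x-\hat y|\le 6r$, and the triangle inequality $|u(x)-u(y)|\le|u(x)-g(\hat x)|+|g(\hat x)-g(\hat y)|+|g(\hat y)-u(y)|$ together with the boundary estimate and the $C^\beta$ regularity of $g$ produce $|u(x)-u(y)|\le C\,r^\gamma M$, after absorbing powers of $\mathrm{diam}(\Omega)$ into the constant. This gives \eqref{global-Holder1-intro}.

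For part~(ii) the same interpolation applies, provided one replaces the boundary estimate by the sharper bound
\begin{equation*}
|u(x)-\ell_{\hat x}(x)|\le C\,d_x^{(1+\beta)/2}\bigl(\|g\|_{C^{1,\beta}(\partial\Omega)}+\|f\|_{L^\infty(\Omega)}\bigr),
\end{equation*}
where $\ell_{\hat x}$ is the affine Taylor polynomial of $g$ at $\hat x$ extended along the tangent hyperplane; the interpolation is run on $u-\ell_{\hat x}$, and the contribution $|\ell_{\hat x}(x)-\ell_{\hat x}(y)|\le\|g\|_{C^1}r$ is $\le C\,r^{(1+\beta)/2}$ after absorbing a power of $\mathrm{diam}(\Omega)$. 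I expect the main obstacle to be precisely this refined boundary estimate. The $C^{1,\beta}$ hypothesis controls $|u-\ell_{\hat x}|$ by $C\,r^{1+\beta}$ on $\partial\Omega\cap B_r(\hat x)$; the uniform Lipschitz condition supplies an exterior truncated cone at $\hat x$ of aperture depending only on $L$. Placing the vertex $y^*$ of a radial barrier on the cone axis at distance $\sim r$ from $\hat x$ and using $\varphi(y)=-|y-y^*|^p$ with $p\in(0,1)$, the eigenvalues of $D^2\varphi$ are one of sign $+$ (radial) and $n-1$ of sign $-$ (tangential), so $p$ can be chosen in the nonempty interval $\bigl(\max\{0,1-(a_1+\cdots+a_{n-1})/a_n\},1\bigr)$---which is nonempty thanks to $a_1>0$ and $a_n>0$---so that $\mathcal M_{\bf a}(D^2\varphi)\le-\eta<0$. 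Calibrating this barrier so as to dominate $|u-\ell_{\hat x}|$ on $\partial(\Omega\cap B_r(\hat x))$ and absorbing the source $f$ via Theorem~\ref{ABP-two} then produces the exponent $(1+\beta)/2$ by combining the $r^{1+\beta}$ boundary oscillation with the $\sim d_x^{\,1/2}$ normal-direction scaling of the barrier. Plugging this refined estimate into the two-case interpolation above gives \eqref{global-Holder2-intro}.
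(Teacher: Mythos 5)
Your interpolation for part (i)---interior estimate, Lemma~\ref{Holder-bdary-lem}(i), and the $|x-y|\lesssim d_x$ versus $|x-y|\gtrsim d_x$ dichotomy---is exactly the paper's argument and is fine; the use of the affine polynomial $\ell_{\hat x}$ in (ii) rather than a constant is an unnecessary refinement, since the tangential variation $|g(\hat z)-g(\hat x)|\le\|g\|_{C^1}|\hat z-\hat x|=O(d_x)$ is already dominated by $d_x^{(1+\beta)/2}$ after absorbing a power of $\mathrm{diam}(\Omega)$.

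The real gap is that you set out to prove, from scratch, a boundary estimate that the paper already supplies: Lemma~\ref{Holder-bdary-lem} has a part (ii), stating that under (S) and a uniform Lipschitz boundary, $\sup_{x\in\Omega,\,y\in\partial\Omega}(u(x)-u(y))/|x-y|^{(1+\beta)/2}\le C\bigl([g]_{1,\partial\Omega}+[Dg]_{\beta,\partial\Omega}+\|f^-\|_{L^\infty(\Omega)}\bigr)$. The paper's proof of Theorem~\ref{Holder-global} merely invokes both parts of that lemma with $\gamma_b=\beta/2$ or $\gamma_b=(1+\beta)/2$ and runs the interpolation you used for (i). Your proposed rederivation would not, in any case, reproduce that lemma. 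For $\varphi(y)=-|y-y^*|^p$ one gets $\mathcal M_{\bf a}(D^2\varphi)=p\,|y-y^*|^{p-2}\bigl[a_n(1-p)-\hat a_n\bigr]$, so the admissible exponent $p$ lies in the operator-determined interval $\bigl(\max\{0,\,1-\hat a_n/a_n\},\,1\bigr)$; it is not a free parameter you can tune to $(1+\beta)/2$, and the claimed ``combination of the $r^{1+\beta}$ boundary oscillation with the $\sim d_x^{1/2}$ normal scaling'' is asserted rather than demonstrated and would require a rescaling iteration you never set up. The paper's barrier bypasses the cone entirely: placing the contact point of the exterior sphere at $\hat x=0$ gives $|x|^2\le 2Rx_n$ on $\overline\Omega$, so the $C^{1,\beta}$ control $|g(x)-\ell_{\hat x}(x)|\le C|x|^{1+\beta}$ along $\partial\Omega$ turns into $\le C'x_n^{(1+\beta)/2}$, and $\psi(x)=\langle Dg(0),x\rangle + C_3 x_n^{(1+\beta)/2} - \tfrac{1}{2|{\bf a}|}\|f^-\|_{L^\infty(\Omega)}|x|^2$ is a genuine supersolution because $D^2(x_n^\mu)$ has a single nonzero eigenvalue, negative for $\mu<1$ and hence weighted by $a_1>0$. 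The uniform Lipschitz hypothesis enters only to justify the ambient inequality $|g(x)-g(y)-\langle Dg(y),x-y\rangle|\le C(L)[Dg]_{\beta,\partial\Omega}|x-y|^{1+\beta}$ on $\partial\Omega$, not to build a cone barrier.
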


A global estimate for the H\"older norm $\|u\|_{C^{0,\gamma}(\Omega)}= \|u\|_{L^{\infty}(\Omega)}+[u]_{\gamma,\Omega}$ can be obtained combining the above estimates with the uniform estimate of Corollary \ref{unif-est-cor}. 

\vskip0.2cm
 In some cases, we can obtain an explicit interior H\"older exponent. For instance, in the case of  asymmetric distributions of weights, concentrated on the smallest or the largest eigenvalue, as for the upper and lower partial trace operators $\mathcal P^\pm_k$, $k < n$,  see Lemma \ref{Holder-asy-int}.  The result depend in fact on the smallness of the quotients $\hat a_1/a_1$ and $\hat a_n/a_n$ (see Subsection \ref{Radial_solutions}), as it can be seen in the statement below.

\begin{thm}\label{Holder-asy-global}  Let $u\in C(\overline \Omega)$ be a viscosity solution of the equation $\mathcal M_{\bf a}(D^2u) =f$  in a bounded domain $\Omega$, where $f$ is continuous and bounded. Suppose $\mathcal M_{\bf a}\in \overline{\mathcal A}$ with $a_1 \ge \hat a _1$, resp. with $a_n\ge \hat a _n$.  Then the global H\"older estimates of Theorem \ref{Holder-global} hold, namely (\ref{global-Holder1-intro}) in the case (i) and (\ref{global-Holder2-intro}) in the case (ii), with
$$\alpha =\max(1-\hat a_1/a_1, 1-\hat a_n/a_n).$$

In particular, we deduce the following $C^\alpha$ estimates.
\vskip 0.2cm
(i) Suppose that $\Omega$ satisfies a uniform exterior sphere condition (S) with radius $R>0$. If $u=g$ on $\partial \Omega$ with $g \in C^{2\alpha}(\partial \Omega)$ and $\alpha \in (0,1/2]$, then $u\in C^{\alpha}(\overline \Omega)$, and 
\begin{equation}\label{global-Holder1-asy}
[u]_{\alpha,\Omega} \le C\left(\|g\|_{C^{2\alpha}(\partial\Omega)}+\|f\|_{L^\infty(\Omega)}\right),
\end{equation}
where $C$ is a positive constant depending only on $n$, $a_1$, $\hat a_1$, $a_n$, $\hat a_n$,  $\tilde a$, $R$ and $\alpha$. 
\vskip 0.2cm

(ii) Suppose in addition that $\Omega$ has a uniform Lipschitz boundary with Lipschitz constant $L$. If $g\in  C^{1,2\alpha-1}(\partial \Omega)$ with $\alpha \in [1/2,1]$, then 
\begin{equation}\label{global-Holder2-asy}
[u]_{\alpha,\Omega} \le C\left(\|g\|_{C^{1,2\alpha-1}(\partial\Omega)}+\|f\|_{L^\infty(\Omega)}\right),
\end{equation}
where $C$ is a positive constant also depending on $L$. 
\end{thm}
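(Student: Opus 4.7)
The plan is to retrace the argument used for Theorem \ref{Holder-global} verbatim, only substituting the abstract interior Hölder exponent by the explicit one supplied by Lemma \ref{Holder-asy-int}. Observe first that the hypothesis $a_1\ge\hat a_1$ (resp.\ $a_n\ge\hat a_n$) forces in particular $a_1>0$ (resp.\ $a_n>0$), which is exactly the positivity of an extremal weight that the boundary barriers rely on; this is what allows the statement to be extended from $\mathcal A$ to the larger class $\overline{\mathcal A}$.

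I would first apply Lemma \ref{Holder-asy-int} in interior balls to obtain, for every $x\in\Omega$ and $y$ in the concentric ball of half the distance $d_x=\mathrm{dist}(x,\partial\Omega)$, an estimate of the form
\[
|u(x)-u(y)|\le C\,d_x^{-\alpha}\bigl(\|u\|_{L^\infty(\Omega)}+\|f\|_{L^\infty(\Omega)}\bigr)\,|x-y|^\alpha,
\]
with $\alpha=\max(1-\hat a_1/a_1,\,1-\hat a_n/a_n)$. Next, at each boundary point $y_0\in\partial\Omega$, I invoke the boundary Hölder estimate of Lemma \ref{Holder-bdary-lem}: under the uniform exterior sphere condition this is produced by radial sub/supersolutions of $\mathcal M_{\bf a}$ that are computed in Subsection \ref{Radial_solutions}, and yields
\[
|u(x)-g(y_0)|\le C\bigl(\|g\|_{C^\beta(\partial\Omega)}+\|f\|_{L^\infty(\Omega)}\bigr)\,|x-y_0|^{\beta/2};
\]
in the Lipschitz case, the analogous barrier combined with $g\in C^{1,\beta}$ gives the corresponding $|x-y_0|^{(1+\beta)/2}$ control after subtracting a linear piece associated with the tangent plane.

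Having these two ingredients, the global Hölder estimate follows from the standard interpolation used for Theorem \ref{Holder-global}: given $x,y\in\overline\Omega$, I compare $|x-y|$ with $\min(d_x,d_y)$ and switch between the interior estimate (when the two points are comparatively deep inside) and the boundary estimate applied at projections of $x$ and $y$ onto $\partial\Omega$ (when they are close to it). The combined rate is the minimum of the two exponents, giving \eqref{global-Holder1-intro} with $\gamma=\min(\alpha,\beta/2)$ and \eqref{global-Holder2-intro} with $\gamma=\min(\alpha,(1+\beta)/2)$. Specializing to $\beta=2\alpha$ in case (i) and $\beta=2\alpha-1$ in case (ii) makes the two exponents coincide, so the minimum is $\alpha$, delivering \eqref{global-Holder1-asy} and \eqref{global-Holder2-asy} respectively.

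The genuine obstacle, already absorbed into Lemma \ref{Holder-asy-int}, is producing the explicit exponent $\alpha$ in the interior: this rests on radial fractional-power sub- and supersolutions of $\mathcal M_{\bf a}$ and on checking that the condition $a_1\ge\hat a_1$ (or $a_n\ge\hat a_n$) supplies the lower bound on the extremal eigenvalue contribution needed to close the ODE computation. Once that interior lemma is in hand, the present theorem is essentially a bookkeeping exercise combining it with the boundary barrier of Lemma \ref{Holder-bdary-lem}, exactly as in Theorem \ref{Holder-global}, with the proviso that the universal constants now depend explicitly on $a_1,\hat a_1,a_n,\hat a_n$ rather than on $a^*$ alone.
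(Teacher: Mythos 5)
Your proposal matches the paper's own proof exactly: rerun the proof of Theorem~\ref{Holder-global}, replacing the abstract interior H\"older exponent of Theorem~\ref{Holder} by the explicit one from Lemma~\ref{Holder-asy-int}, and choose $\beta=2\alpha$ in case (i) and $\beta=2\alpha-1$ in case (ii) so that the boundary and interior exponents coincide, giving $\gamma=\alpha$. One small caution in writing it up: in the interior step the $d_x^{-\alpha}$ factor from scaling must be absorbed using the local oscillation bound $\|u-u(x_0)\|_{L^\infty(B_{d_x}(x))}\le CK\,d_x^{\gamma_b}$ supplied by the boundary barrier, not the raw global norm $\|u\|_{L^\infty(\Omega)}$ appearing in your first display---though your subsequent appeal to the interpolation of Theorem~\ref{Holder-global} already encompasses this.
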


The regularity issue is far from being completely explored in the case of degenerate, non-uniform ellipticity. See for instance \cite{IMB,BCDV} for other kind of singular or degenerate elliptic operators and \cite{F,FV} for non-commutative structures. 

Concerning higher regularity, one could borrow the techniques of \cite{TRU}, \cite{CAF}, \cite{CC2}, \cite{SWI}, \cite{CDV0}, \cite{CDV}, \cite{KOV1}, \cite{IS}, \cite{KOV2}, which however do not seem at the moment directly applicable  in the more general non-uniformly elliptic setting.

It is remarkable the particular case of the interior $C^{1,\alpha}$ regularity proved in \cite{OS} for the equation $\lambda_1[u]= f(x)$ with $C^{1,\alpha}$ boundary data. 

\vskip0.2cm

Further aspects of the qualitative theory, like the strong maximum principle and Liouville theorems, will be discussed in the last sections of the paper. New results for operators $\mathcal M_{\bf a}\in \overline{\mathcal A}$ will be shown there, depending on the relative magnitude of $ a_1$ and $a_n$, and their complements $\hat a_1$ and $\hat a_n$ with respect to $|{\bf  a}|=a_1+\dots+a_n$, see Subsection \ref{Radial_solutions}. 

\vskip0.2cm

Turning to the motivations about the importance of this research, we recall that the partial trace operators $\mathcal{P}^+_k(X)$ are degenerate elliptic operators, which can be represented as Bellman operators:
\begin{equation}\label{ptraceb}
\begin{split}
\mathcal{P}^+_k(X)= \sup_{W\in\mathcal G_k}\hbox{\rm Tr}(X_W), \quad \mathcal{P}^-_k(X)= \inf_{W\in\mathcal G_k}\hbox{\rm Tr}(X_W),
\end{split}
\end{equation}
where $\mathcal G_k$ is the Grassmanian of the $k$-dimensional subspace $W$ of $\mathbb R^n$ and $X_W$ is tha matrix of the quadratic form associated to $X$ restricted to $W$, see \cite{HL}. 

Upper and lower partial trace operators arise in geometric problems of mean partial curvature considered by Wu \cite{WU} and Sha \cite{SHA,SHA2}. Following the interest generated by the previous works, a number of papers has been devoted to the properties of these operators, we recall for instance \cite{AGV,CDLV,GV,VIT2}.   

On the other hand, it is also worth noticing that Bellman equations arise in stochastic control problem, see Krylov \cite{KRY}, Fleming-Rishel \cite{FR}, Fleming-Soner \cite{FS} and the references therein.

\vskip0.2cm

As well as the partial trace operators $\mathcal P^\pm_k$ with $k<n$ constitute a model for degenerate elliptic Bellman operators, the min-max operator $\mathcal M$ provides for $n \ge 3$ a prototype of degenerate elliptic Isaacs operators by the representation:
\begin{equation}\label{infsup}
\mathcal M(X) = \sup_{|\xi| =1}\inf_{|\eta| =1} \hbox{\text Tr}(X_{\xi,\eta}),
\end{equation}
where Tr$(X_{\xi,\eta})$ is the trace the matrix $X_{\xi,\eta}$ of the quadratic form associated to $X$ restricted to $L(\xi,\eta)$, the subspace of $\mathbb R^n$ spanned by $\xi$ and $\eta$.
  
The alternative representation
\begin{equation}\label{M-quadratic}
\mathcal M(X) = \max_{|\xi|=1} {\langle X\xi,\xi\rangle} + \min_{|\xi|=1} {\langle X\xi,\xi\rangle},
\end{equation}
suggests the relationship between $\mathcal M(X)$ and stochastic zero-sum, two-players differential games and  Isaacs equations,
for which  we we refer for instance to \cite{NIS, FN, FS} and the references therein and to \cite{BL, BCQ} for more recent contributions. 

\vskip0.2cm

Following the main stream of the mean value properties of solution to linear equations, as well as in the case of the $\infty$-Laplacian, it is also worth to be remarked that  whenever $u$ is $C^2,$ the following expansion yields 
\begin{equation}\label{Taylor}
\begin{split}
&u^\varepsilon_1(x)\equiv \min_{|\xi|=1}\frac{u(x+\epsilon \xi)+u(x-\epsilon \xi)}{2} = u(x)+ \frac{\varepsilon^2}2\,\lambda_1(x)+o(\varepsilon^2), \\ 
&u^\varepsilon_2(x)\equiv \max_{|\eta|=1}\frac{u(x+\epsilon \eta)+u(x-\epsilon \eta)}{2} = u(x)+ \frac{\varepsilon^2}2\,\lambda_n(x) +o(\varepsilon^2).
\end{split}
\end{equation}
As a consequence, if we consider a continuous function $u,$ the operator given by the following limit, 
\begin{equation}\label{mean}
\begin{split}
\lim_{\epsilon\to 0}\tfrac{2}{\epsilon^2}\left( u_{\varepsilon,1}(x)+ u_{\varepsilon,2}(x)-2u(x)\right),
\end{split}
\end{equation}
whenever it exists, may be considered as the weak version of our operator $\mathcal{M}$. For an almost compete list of references from this point of view see: \cite{NV,MPR} for the p-Laplace equation, as well as \cite{FLM,FP}  for further applications to non-commutative fields where a lack of ellipticity occurs. 

\vskip0.2cm

The paper is organized as follows. In Section \ref{prelim} we introduce the main definitions about elliptic operators and viscosity solutions. In Section \ref{aux} we discuss in detail the properties of the weighted partial trace operators, in particular $\mathcal M$. We show a comparison principle, an existence and uniqueness theore, and compute the radial solutions. In Section \ref{abp} we prove Theorem \ref{ABP}. In Section \ref{harnack} we show the Harnack inequality, interior and boundary H\"older estimates. We also discuss, in Section \ref{strong}, the strong maximum principle via  both the Hopf boundary point lemma and the Harnack inequality, showing suitable counterexamples. Finally, in Section \ref{liouville}, we also prove  Liouville theorems and an unilateral Liouville property with the Hadamard's three circles theorem.

\section{General preliminaries}\label{prelim}

This section is organized in some subsections, mainly for introducing common notation about viscosity theory of elliptic nonlinear PDEs, see Subsection \ref{visco1} and \ref{dua5}.  In Subsections \ref{operator2} and \ref{minmax3}  we introduce our class of operators $\mathcal{A}$ and in particular discuss  the min-max operator $\mathcal{M}$ showing by counterexamples that it is nonlinear, non-convex and non-uniformly elliptic. In Subsection \ref{compar4} we discuss a comparison result with the partial trace operators operator $\mathcal P^{\pm}_k.$ 

\subsection{Ellipticity and viscosity solutions}\label{visco1}

We start recalling some ellipticity notions. Let $\mathcal{S}^{n}$ be the set of $n \times n$ symmetric matrices with real entries, partially ordered with the relationship $X \le Y$ if and only if $Y-X$ is semidefinite positive. 

\vskip0.2cm
A fully nonlinear operator, that is a mapping  $\mathcal F: \mathcal{S}^{n} \to \mathbb R$, is said degenerate elliptic if 
\begin{equation}\label{de} 
X \le Y \ \ \Rightarrow \ \ \mathcal F(X) \le \mathcal F(Y),
\end{equation}
and uniformly elliptic if
\begin{equation}\label{ue}
X \le Y \ \ \Rightarrow \ \  \lambda {\text Tr}(Y-X) \le \mathcal F(Y)-\mathcal F(X) \le  \Lambda {\text Tr}(Y-X),
\end{equation}
for positive constants $\lambda$ and $\Lambda$, called ellipticity constants.
Note indeed that, by the left-hand side inequality in (\ref{ue}), a uniformly elliptic operator $\mathcal F$ satisfies (\ref{de}), and so it is degenerate elliptic. 

The uniform ellipticity also implies the continuity of the mapping $\mathcal F:{\mathcal S}^n \to \mathbb R$. In what follows we also assume that $\mathcal F$ is a continuous mapping even in the degenerate elliptic case.

\vskip0.2cm

Suppose now $X \le Y$. It is plain that ${\text Tr}(Y-X)\ge0$. Suppose in addition $\mathcal F(Y)=\mathcal F(X)$.  If $\mathcal F$ is uniformly elliptic, in view of the left-hand side of (\ref{ue}), we also have ${\text Tr}(Y-X)\le0$, so that ${\text Tr}(Y-X)=0$. Then $Y=X$. In other words, $\mathcal F$ is strictly increasing on ordered chains of $\mathcal S^n$.

\vskip0.2cm

The class of uniformly elliptic operators with given ellipticity constants $\lambda$ and $\Lambda$ is bounded by two estremal operators, the maximal and minimal Pucci operator, which are in turn uniformly elliptic with the same ellipticity constants, respectively:
\begin{equation}\label{Pucci}
\begin{split}
&\mathcal M_{\lambda,\Lambda}^+(X)=\Lambda \hbox{\text Tr}(X^+)-\lambda \hbox{\text Tr}(X^-),\\
&\mathcal M_{\lambda,\Lambda}^-(X)=\lambda \hbox{\text Tr}(X^+)-\Lambda \hbox{\text Tr}(X^-),
\end{split}
\end{equation}
where $X=X^+-X^-$ is the unique decomposition of $X \in \mathcal S^n$ as difference of semidefinite positive matrices $X^+$ and $X^-$ such that $X^+X^-=0$.

In view of this definition, the uniformly ellipticity (\ref{ue}) of $\mathcal F$ can be equivalently stated as
\begin{equation}\label{uee}
\forall\,X,Y\in\mathcal S^n \quad \quad   \mathcal M_{\lambda,\Lambda}^-(Y-X) \le \mathcal F(Y)-\mathcal F(X) \le   \mathcal M_{\lambda,\Lambda}^+(Y-X).
\end{equation}

From this it also follows that, if $\mathcal F$ is uniformly elliptic and $\mathcal F(0)=0$, then
\begin{equation}\label{ue0}
\forall\,X\in\mathcal S^n \quad \quad   \mathcal M_{\lambda,\Lambda}^-(X) \le \mathcal F(X) \le   \mathcal M_{\lambda,\Lambda}^+(X),
\end{equation}
which shows the extremality of Pucci operators.

\vskip0.2cm 

Throughout this paper we will assume in fact
\begin{equation}\label{F(0)=0}
\mathcal F(0)=0.
\end{equation} 

Of course, the results can be applied, in the case $\mathcal F(0)\neq 0$, to the operator $\mathcal G(X)=\mathcal F(X)- \mathcal F(0)$.

\vskip0.2cm

Let $\Omega$ be an open set of $\mathbb R^n$. A fully nonlinear operator $\mathcal F$ acts on $u \in C^2(\Omega)$ through the Hessian matrix $D^2u$ setting 
$$\mathcal F[u](x)=\mathcal F(D^2u(x)).$$

Let $f$ be a function defined in $\Omega$. A solution $u \in C^2(\Omega)$ of the equation $\mathcal F[u]=f$ is called a classical solution, as well as classical subsolution or supersolution of $F[u]=f$ if $\mathcal F(D^2u(x))\ge f(x)$ or $\mathcal F(D^2u(x))\le f(x)$ for every $x \in\Omega$, respectively.

 For instance, if $\mathcal F(X)=$Tr$(X)$ and $f(x)$ is a continuous function, then $\mathcal F[u]=\Delta u$ is the Laplacian and the equation  $\mathcal F[u]=f$ is the Poisson equation $\Delta u=f$. 

\vskip0.2cm

Let $\mathcal F$ be a degenerate elliptic operator. We can solve the equation $\mathcal F[u]=f(x)$ in a weaker sense, namely in the viscosity sense. We are essentially concerned in this paper with pure second order operators $\mathcal F[u]=\mathcal F(D^2u)$. We refer to \cite{CC} and \cite{USE} for general operators, also depending on $x \in \Omega$, $u$ and the gradient $Du$, and to \cite{HL} for a geometric interpretation of viscosity solutions.

We briefly recall what means to solve the equation $\mathcal F[u]=f$ introducing sub/super jets basic notions.

\vskip0.2cm

Let $\mathcal O$ be a locally compact subset of $\mathbb R^n$, and $u:\mathcal O \to \mathbb R$.  The second order superjet $J_{\mathcal O}^{2,+}u(x_0)$ and subjet $J_{\mathcal O}^{2,-}u(x_0)$ of $u$ at $x_0\in \mathcal O$ are respectively the sets 
\begin{align*}
J_{\mathcal O}^{2,+}u(x_0) &= \left\{(\xi,X) \in   \mathbb R^n \times \mathcal S^n  : u(x) \le u(x_0)+\langle\xi,x-x_0\rangle\right. \\
& \left.+\tfrac12\langle X(x-x_0),(x-x_0)\rangle+ o(|x-x_0|^2) \ \ \hbox{\text as} \ x \to x_0\right\}
\end{align*}  
and 
\begin{align*}
J_{\mathcal O}^{2,-}u(x_0) &= \left\{(\xi,X) \in \mathbb R^n \times \mathcal S^n   : u(x) \ge u(x_0)+\langle\xi,x-x_0\rangle\right. \\
& \left.+\tfrac12\langle X(x-x_0),(x-x_0)\rangle+ o(|x-x_0|^2) \ \ \hbox{\text as} \ x \to x_0\right\}.
\end{align*} 

 We denote by usc$(\mathcal O)$  and lsc$(\mathcal O)$ the set of upper and lower semicontinuous functions in $\mathcal O$, respectively. 

If $u$ is  usc$(\mathcal O)$, then $u$ is a viscosity subsolution of a fully nonlinear elliptic equation $\mathcal F[u]=f$ if 
$$\mathcal F(X)\ge f(x) \ \ \hbox{for all} \ x \in \mathcal O \ \ \hbox{and all}\:\: (\xi,X) \in J_{\mathcal O}^{2,+}u(x).$$

If $u\in$\,lsc$(\mathcal O)$, then $u$ is a viscosity supersolution  of the same equation if
$$\mathcal F(X)\le f(x) \ \ \hbox{for all} \ x \in \mathcal O \ \ \hbox{and all}\:\:(\xi,X) \in J_{\mathcal O}^{2,-}u(x).$$

A viscosity solution of the equation $\mathcal M[u]=f$ is both a subsolution and a supersolution $u \in C(\mathcal O)$.

It is worth noticing that classical solutions  are viscosity solutions. Viceversa, viscosity solutions  of class $C^2$ are in turn classical solutions. The same holds for subsolutions and supersolutions.

\subsection{The operator class $\mathcal{A}$}\label{operator2}

Let $\{{\bf e}_1, \dots,{\bf e}_n\}$ be the standard basis in $\mathbb R^n$, such that $({\bf e}_i)_j=\delta_{ij}$ for $i,j=1,\dots,n$, and $\lambda_i(X)$, $i=1,\dots,n$, be the eigenvalues of $X\in \mathcal S^n$ in non-decreasing order. 

Let ${\bf a}=(a_1,\dots,a_n)=a_1{\bf e}_1+\dots+a_n{\bf e}_n$. We consider the class of degenerate elliptic weighted trace operators 
\begin{equation}\label{basic-class}
\overline {\mathcal A}=\{\mathcal M_{\bf a} :  \underline a\equiv \min_i a_i \ge 0;  \ \overline a\equiv \max_i a_i>0\},
\end{equation} 
where 
\begin{equation*}
\mathcal M_{\bf a}(X) = a_1\lambda_1(X)+\dots+a_n\lambda_n(X), \ \ \hbox{\text see (\ref{Ma})}.
\end{equation*}

We observe that $\overline {\mathcal A}$ contains both uniformly and non-uniformly elliptic operators. In particular all previously considered operators belong to this class with a suitable representation:
\begin{equation}\label{examples}
\begin{split}
&{\text Tr}(X)=\mathcal M_{{\bf e}_1+\dots+{\bf e}_n}(X);\quad \quad  \mathcal M(X)=\mathcal M_{{\bf e}_1+{\bf e}_n}(X);\\
&\mathcal P^+(X)= \mathcal M_{{\bf e}_{n-k+1}+\dots+{\bf e}_n}(X); \quad \mathcal P^-(X)= \mathcal M_{{\bf e}_{1}+\dots+{\bf e}_k}(X).
\end{split}
\end{equation} 

Very recently, recalling the pioneeristic paper \cite{PSSW}, Blanc and Rossi \cite{BR} have shown  that it is possible to define a game satisfying a dynamic programming principle (DPP) which leads to the Dirichlet problem 
\begin{equation}\label{DPPlimit}\begin{split}
\left\{
\begin{array}{ll}
\mathcal{M}_{\bf a}[u]=0,&\hbox{\text in}\ \Omega\\
u=g(x),&\hbox{\text on}\ \partial\Omega\,.
\end{array}
\right.
\end{split}
\end{equation} 
Moreover, an associated evolution problem is considered in \cite{BER}.

\vskip 0.2cm

 We point out that $\mathcal M=\mathcal M_{{\bf e}_1+{\bf e}_n}$ is neither linear nor uniformly elliptic, neither concave nor convex, except when $n=2$, as it follows from the representation (\ref{infsup}) and it will be proved in the next section with suitable counterexamples.

\vskip0.2cm

Actually, $\mathcal M$ is a model of a larger class of degenerate, possibly non-uniformly elliptic operators:
\begin{equation}\label{A}
\mathcal A = \{\mathcal M_{\bf a} : \underline a \ge 0; \ a^*\equiv \min(a_1, a_n)>0\},
\end{equation}
which can be seen as $\mathcal A=\mathcal A_1\cap \mathcal A_n$, where 
\begin{equation}\label{Aj}
\mathcal A_j = \{\mathcal M_{\bf a} : \underline a \ge 0; \ a_j>0\}.
\end{equation}

Setting in addition
\begin{equation}\label{Aunif}
\underline{\mathcal A} = \{\mathcal M_{\bf a} :  \underline a  > 0\},
\end{equation}
we notice that   
$$ \underline{\mathcal A} \subset  \mathcal A = \mathcal A_1 \cap \mathcal A_n \subset \overline{\mathcal A}.$$ 

We remark for instance that, while the min-max operator $\mathcal M$ belongs to $\mathcal A$, the partial trace operators $\mathcal P^-_k \in \mathcal A_1$, $ \mathcal P^+_k \in \mathcal A_n$, do not belong to $\mathcal A$ for $k<n$.

\vskip0.2cm
On the other hand, every $\mathcal M_{\bf a} \in \underline{\mathcal A}$ is uniformly elliptic. 

In fact, if $X \le Y$, then 
\begin{equation}\label{Ma-elliptic}
\begin{split}
\mathcal M_{\bf a}(Y) - \mathcal M_{\bf a}(X) =&  \sum_{i=1}^na_i(\lambda_i(Y)-\lambda_i(X))\\
\ge&\, \underline a\, \hbox{\text Tr}(Y-X),
\end{split}
\end{equation}
so that every $\mathcal M_{\bf a} \in \overline A$ is degenerate elliptic. Since $X\le Y$ also implies
\begin{equation}\label{Ma-lip}
\begin{split}
\mathcal M_{\bf a}(Y) - \mathcal M_{\bf a}(X) =&  \sum_{i=1}^na_i(\lambda_i(Y)-\lambda_i(X))\\
\le&\, \overline a\,  \hbox{\text Tr}(Y-X),
\end{split}
\end{equation}
we conclude that  $\mathcal M_{\bf a} \in \underline A$ is uniformly elliptic with ellipticity constants $\lambda=\underline a \equiv \min_ia_i$ and $\Lambda=\overline a \equiv \max_ia_i$.

\vskip0.2cm

We also observe that the operators $\mathcal M_{\bf a} \in \overline {\mathcal A}$ are invariant by rotation, since $\mathcal M_{\bf a}(\mathcal R^TX \mathcal R)=\mathcal M(X)$ for all orthogonal matrices $\mathcal R$, and are positively homogeneous of degree one:
\begin{equation}\label{pos-hom}
\mathcal M_{\bf a}(\rho X)=\sum_{i=1}^na_i\lambda_i(\rho X)= \rho\sum_{i=1}^na_i\lambda_i(X)=\rho \mathcal M_{\bf a}(X), \ \ \rho\ge0.
\end{equation}
Next, we investigate more closely the peculiar properties of the min-max operator $\mathcal M(X)=\lambda_1(X)+\lambda_n(X)$.

\subsection{The min-max operator $\mathcal M$}\label{minmax3}

In the previous section, we claimed  that  $\mathcal M$ is neither linear nor uniformly elliptic, neither concave nor convex, except for $n=2$.  This is intuitive by the representation (\ref{infsup}):
$$\mathcal M(X) = \sup_{|\xi| =1}\inf_{|\eta|=1} \hbox{\text Tr}(X_{\xi,\eta}).$$

Nonetheless, we present a few counterexamples that support the above claim.
\vskip0.2cm

\begin{rem}
Let us consider the matrices $X_1={\bf e}_1\otimes{\bf e}_1-{\bf e}_3\otimes{\bf e}_3$, $X_2=- {\bf e}_1\otimes{\bf e}_1+{\bf e}_2\otimes{\bf e}_2$ and $X_3={\bf e}_1\otimes{\bf e}_1-{\bf e}_2\otimes{\bf e}_2-{\bf e}_3\otimes{\bf e}_3$.  Then $\lambda_1(X_i)=-1$ and $\lambda_3(X_i)=1$, so that $\mathcal M(X_i)=0$ for all $i=1,2,3$.

\begin{itemize}
 
\item[  ]

 \item[(i)]   The operator $\mathcal M$ is not linear in dimension $n\geq 3.$ In fact 
\begin{equation*}
\begin{split}
X_1-X_2&={\bf e}_1\otimes{\bf e}_1-{\bf e}_3\otimes{\bf e}_3 + {\bf e}_1\otimes{\bf e}_1-{\bf e}_2\otimes{\bf e}_2\\
&=2 {\bf e}_1\otimes{\bf e}_1 - {\bf e}_2\otimes{\bf e}_2 -{\bf e}_3\otimes{\bf e}_3
\end{split}
\end{equation*}
and therefore
$$\lambda_1(X_1-X_2)=-1,\quad \lambda_3(X_1-X_2)=2,$$
so that
 $$
 \mathcal M(X_1)-\mathcal M(X_2)=0 \neq 1= \mathcal M(X_1-X_2).
 $$

\item[(ii)]  The operator $\mathcal M$ is not uniformly elliptic in dimension $n\geq 3.$ 
In fact,  we note that $X_3 \le X_1$, and  
$$\mathcal M(X_3)=\mathcal M(X_1)=0,  \ \ \hbox{\text  but } X_3\neq X_1,$$
against the strictly increasing property on ordered chains observed in Subsection \ref{visco1} for the uniformly elliptic case.\\

 \item[(iii)] The operator $\mathcal M(X)$ is neither convex nor concave. In fact,  for every $t\in [0,1],$ it turns out that
\begin{equation*}
 \begin{split}
 tX_1+(1-t)X_2&=t({\bf e}_1\otimes{\bf e}_1-{\bf e}_3\otimes{\bf e}_3)\\
&+(1-t)(- {\bf e}_1\otimes{\bf e}_1+{\bf e}_2\otimes{\bf e}_2)\\
& =(2t-1){\bf e}_1\otimes{\bf e}_1+(1-t){\bf e}_2\otimes{\bf e}_2-t{\bf e}_3\otimes{\bf e}_3
 \end{split}
 \end{equation*}
 and therefore
 \begin{align*}
  \mathcal M(tX_1+(1-t)X_2)&=\lambda_1(tX_1+(1-t)X_2)+\lambda_3(tX_1+(1-t)X_2)\\
&=\min\{2t-1;-t\}+\max\{2t-1;1-t\},
 \end{align*}
so that $\mathcal M(tX_1+(1-t)X_2)=1-2t$ for $t \in(\frac13,\frac23).$ From this 
\begin{equation*}
\mathcal M(tX_1+(1-t)X_2)
\left\{
\begin{array}{ll}
 >0 & \hbox{\text for} \ t \in(\frac13,\frac12), \\ \\
 < 0 & \hbox{\text for} \ t \in(\frac12,\frac23),
\end{array}
\right.
\end{equation*}
while  it is plain that for every $t\in [0,1]$
 $$
 t\mathcal M(X_1)+(1-t)\mathcal M(X_2)=0.
 $$
 Thus $\mathcal M$ is neither convex nor concave.\qed
 \end{itemize}
 \end{rem}
\vskip0.2cm

Since $\mathcal M \in \mathcal A$ we already know that it is homogeneous of degree one (\ref{pos-hom}): for every $\rho\ge0$ and  for every $X\in \mathcal S^n$ 
\begin{equation}\label{hom}
\mathcal M(\rho X)=\rho\mathcal M(X).
\end{equation}

On the other hand, 
\begin{equation}\label{refl}
\mathcal M(- X)=\lambda_1(- X)+\lambda_n(-X)= -\lambda_n(X)-\lambda_1(X)=-\mathcal M(X),
\end{equation}
and therefore (\ref{hom}) continues to hold for $\rho<0$.

\vskip0.2cm

The next remark contains  a few comments on the representation (\ref{infsup}). 

\begin{rem}\label{infsup-orto-rem}
The operator $\mathcal M(X)$ can be put in the form
\begin{equation}\label{infsup-orto}
\mathcal M(X) = \sup_{|\xi| =1}\inf_{\substack{{|\eta|=1}\\{\eta\bot\xi}}} \hbox{\text Tr}(X_{\xi,\eta}).
\end{equation}
In order to prove this, we start observing that plainly
\begin{align*}
\mathcal M(X) &= \sup_{|\xi| =1}\inf_{|\eta|=1} \left(\langle X\xi,\xi\rangle +  \langle X\eta,\eta\rangle\right)\\
&\le \sup_{|\xi| =1}\inf_{\substack{{|\eta|=1}\\{\eta\bot\xi}}} \hbox{\text Tr}(X_{\xi,\eta}).
\end{align*}
To have also the reverse inequality, and so (\ref{infsup-orto}), we observe that the representative matrix $X_{\xi,\eta}$ of the quadratic form associated to $X$ restricted to $L(\xi,\eta)$, the subspace of $\mathbb R^n$ spanned by directions $\xi$ and $\eta$, has  trace 
$$\hbox{\rm Tr}(X_{\xi,\eta}) = \langle X\xi,\xi\rangle +  \langle X\eta,\eta\rangle,$$
and thus
$$\sup_{|\xi| =1}\inf_{\substack{{|\eta|=1}\\{\eta\bot\xi}}} \hbox{\text Tr}(X_{\xi,\eta})=\sup_{|\xi| =1} (\langle X\xi,\xi\rangle +\inf_{\substack{{|\eta|=1}\\{\eta\bot\xi}}}  \langle X\eta,\eta\rangle).$$
To  compute the inf in the latter equation, we may assume that $X$ is diagonal, by rotational invariance, with the eigenvalues $\lambda_1 \le \dots \le \lambda_n$ on the diagonal from the top to the bottom. Note also that in this case $\langle X\xi,\xi\rangle=\lambda_1\xi_1^2+\dots+\lambda_n\xi_n^2$ and $\langle X\eta,\eta\rangle=\lambda_1\eta_1^2+\dots+\lambda_n\eta_n^2$, so that by symmetry we may assume $\xi_i\ge0$ and $\eta_i\ge0$ for all $i=1,\dots,n$, that is 
$$\sup_{|\xi| =1}\inf_{\substack{{|\eta|=1}\\{\eta\bot\xi}}} \hbox{\text Tr}(X_{\xi,\eta})=\sup_{\substack{{|\xi|=1}\\{\xi \ge0}}} (\langle X\xi,\xi\rangle +\inf_{\substack{{|\eta|=1}\\{\eta \ge0}\\{\eta\bot\xi}}}  \langle X\eta,\eta\rangle).$$

\noindent Using the Lagrange multipliers $\lambda$ and $\mu$, the inf is obtained in correspondence of a critical point of the function
$$h(\eta,\lambda,\mu) :=  \langle X\eta,\eta\rangle - \lambda  (\langle \eta,\eta\rangle-1) - \mu  \langle \xi,\eta\rangle,$$
which solve the system
$$\left\{
\begin{array}{lll}
&X\eta=\lambda\eta+\frac\mu2\xi\\
&\langle \eta,\eta\rangle=1\\
& \langle \xi,\eta\rangle =0 
\end{array}\right.
$$
or equivalently
$$\left\{
\begin{array}{lll}
&\lambda_1\eta_1=\lambda\eta_1+\frac\mu2\xi_1\\
&\ \ \dots \ \ \  \dots \ \ \ \dots\\
&\lambda_n\eta_n=\lambda\eta_n+\frac\mu2\xi_n\\
&\eta_1^2+\dots+\eta_n^2=1\\
&  \xi_1\eta_1+\dots+\xi_n\eta_n =0 \,.
\end{array}\right.
$$
\vskip0.1cm
\noindent We can show that $\mu=0$. Otherwise, suppose by contradiction $\mu\neq0$. Let $I=\{ i \in\{1,\dots,n\} : \xi_i\neq 0\}$, which is non-empty because $|\xi|=1$. Then from above  $(\lambda_i-\lambda)\eta_i=\frac\mu2\xi_i\neq 0$, and so $\lambda\neq \lambda_i$ for all $i \in I$. Inserting $\eta_i =\frac\mu2\,\frac{\xi_i}{\lambda-\lambda_i}$ in the last row of the system we get 
$$\frac\mu2\sum_{ i\in I} \frac{\xi_i^2}{\lambda-\lambda_i}=0.$$
Since $\xi_i >0$ and $\eta_i> 0$ for $i \in I$, all the terms of the sum have the same sign (the sign of $\mu$), and this would imply $\mu=0$, against the assumption. Therefore critical points are not affected by the constraint $\eta\bot \xi$, and this proves the representation (\ref{infsup-orto}).
\end{rem}

\vskip0.2cm

If instead of ''sup inf''  as in  (\ref{infsup-orto}) we consider ''inf sup'', we re-obtain $\mathcal M$:
\begin{equation}\label{infsup-supinf}
\begin{split}
\mathcal M(X) &= \sup_{|\xi| =1}\inf_{\substack{{|\eta|=1}\\{\eta\bot\xi}}} \hbox{\text Tr}(X_{\xi,\eta})\\
&= \inf_{|\xi| =1}\sup_{\substack{{|\eta|=1}\\{\eta\bot\xi}}} \hbox{\text Tr}(X_{\xi,\eta})
\end{split}
\end{equation}
with or without the constraint $\eta\bot\xi$.

\vskip0.3cm

\subsection{Comparison with the partial trace operators operator $\mathcal P^{\pm}_k$}\label{compar4}

\vskip0.2cm

Let us give a comparative look to the partial trace operators (\ref{ptrace}): 
\begin{equation*}
\begin{split}
 \mathcal P_k^-(X)=\lambda_1(X)+\dots+\lambda_k(X), \ \ \mathcal P_k^+(X)=\lambda_{n-k+1}(X)+\dots+\lambda_n(X).
\end{split}
\end{equation*}

\begin{rem}\label{pk}

If  in  (\ref{infsup-supinf}) we consider ''sup sup'' or ''inf inf" instead od ''sup inf'' or ''inf sup '', it is not difficult to recognize, from (\ref{ptraceb}), that we obtain the above partial trace operators with $k=2$: 
\begin{equation}\label{infinf}
\begin{split}
&\mathcal P_2^-(X) = \inf_{|\xi| =1}\inf_{\substack{{|\eta| =1}\\{\eta\bot\xi}}} \hbox{\text Tr}(X_{\xi,\eta}), \\ 
& \mathcal P_2^+(X) = \sup_{|\xi| =1}\sup_{\substack{{|\eta| =1}\\{\eta\bot\xi}}} \hbox{\text Tr}(X_{\xi,\eta}).
\end{split}
\end{equation}

\vskip0.2cm

Next, we list some properties of operators $\mathcal P^\pm_k$. By definition, it is plain that  $\mathcal{P}^-_k\le \mathcal{P}^+_k$; in addition $\mathcal{P}^+_k$ and $\mathcal{P}^-_k$ are respectively subadditive and superadditive:
$$
 \mathcal{P}^-_k(X)+\mathcal{P}^-_k(Y) \le  \mathcal{P}^-_k(X+Y) \le \mathcal{P}^+_k(X+Y) \le \mathcal{P}^+_k(X)+\mathcal{P}^+_k(Y).
$$
Moreover, $\mathcal{P}^-_k(X)=-\mathcal{P}^+_k(-X)$, so that  from the left-hand inequality
$$
 \mathcal{P}^-_k(X+Y) \le   \mathcal{P}^-_k(X) - \mathcal{P}^-_k(-Y) = \mathcal{P}^-_k(X) + \mathcal{P}^+_k(Y)
$$
and from the right-hand
$$
 \mathcal{P}^+_k(X+Y) \ge   \mathcal{P}^+_k(X) - \mathcal{P}^+_k(-Y) = \mathcal{P}^+_k(X) + \mathcal{P}^-_k(Y)
$$

In particular, since $\lambda_1(X)= \mathcal{P}^-_1(X)$ and $\lambda_n(X)= \mathcal{P}^+_1(X)$,
\begin{equation}\label{l1}
\lambda_1(X)+\lambda_1(Y) \le \lambda_1(X+Y) \le  \lambda_1(X)+\lambda_n(Y)
\end{equation}
and
\begin{equation}\label{ln}
\lambda_1(X)+\lambda_n(Y) \le \lambda_n(X+Y) \le  \lambda_n(X)+\lambda_n(Y).
\end{equation}

We recall that the inequality stated above for the partial trace operators $\mathcal P^\pm_k$ continues to  hold for the Pucci extremal operators $\mathcal M^\pm_{\lambda,\Lambda}$, that can be in turn regarded as  Bellman operators. In fact, setting $\mathcal S^n_{\lambda,\Lambda}= \{A \in \mathcal S^n : \lambda I \le A \le \Lambda I\}$, where $I$ is the $n \times n$ identity matrix, we have
\begin{equation}\label{Pucci-B}
\mathcal M^+_{\lambda,\Lambda}(X) = \sup_{A \in \mathcal S^n_{\lambda,\Lambda}}{\text {Tr}}(AX), \ \ \mathcal M^-_{\lambda,\Lambda}(X) = \inf_{A \in \mathcal S^n_{\lambda,\Lambda}}{\text {Tr}}(AX).
\end{equation}
\end{rem}

\vskip0.3cm

\subsection{Duality}\label{dua5}

\vskip0.2cm

 Let $\mathcal F$ be a fully nonlinear degenerate elliptic operator. If $\mathcal F$ is linear and  $u$ is a subsolution of the equation $\mathcal F(D^2u)=f$, then $v=-u$  is a supersolution of the equation $\mathcal F(D^2v)=-f$. 

If we deal with an arbitrary fully nonlinear operator and  $u$  is a subsolution to $F(D^2u)= f$, then $v=-u$ is a supersolution of an equation $\tilde{\mathcal F}(D^2v)=-f$ for the dual operator $\tilde{\mathcal F}$,
\begin{equation}\label{dual}
\tilde{\mathcal F}(X)=-{\mathcal F}(-X),
\end{equation}
which is  is in general different from $\mathcal F$. Moreover, $\overline{\mathcal F}$ is degnerate (uniformly) elliptic if $\mathcal F$ is degenerate (uniformly) elliptic.

Computing the dual of the operators introduced above, we note that by homogeneity for the min-max operator $\mathcal M$ we have $\tilde{\mathcal M} = \mathcal M$ as in the case of linear operators, while the upper and lower partial trace operators  are each one the dual of the other one, $\tilde{\mathcal P}^\pm_k = \mathcal P^\pm_k$, as well as the maximal and the minimal the Pucci operators, $\tilde{\mathcal M}_{\lambda,\Lambda}^\pm =  \mathcal M_{\lambda,\Lambda}^\mp$. In the general case  $\mathcal M_{\bf a}\in \overline{\mathcal A}$,  we have $\tilde{\mathcal M}_{\bf a} = \mathcal M_{{\bf a}'}$, where ${\bf a}'=(a_n,a_{n-1},\dots,a_1)$,.

\section{Auxiliary results}\label{aux}

In this section we apply the Perron method, well known in the literature, see for instance \cite{USE} and \cite{HL1}, in order to show: weak maximum and  comparison principles, existence and uniqueness of solutions, see respectively Subsections \ref{weakcompar}, \ref{existuniq}. The proofs are based on the properties of our operators, suitably exploited, and an appropriate adaptation of arguments used for  the uniformly elliptic case. In Subsection \ref{Radial_solutions} we obtain the radial representation of the operators $\mathcal{M}_{\bf a}.$
\vskip0.2cm

\subsection{Weak maximum and comparison principles}\label{weakcompar}

\vskip0.2cm

The following comparison principle holds between viscosity subsolutions and supersolutions of the equation $\mathcal M_{\bf a}[u]=f$ in a bounded domain $\Omega$, as proved for uniformly elliptic operators in the basic paper of Crandall-Ishii-Lions \cite{USE}.

 \begin{thm}\label{comparison} {\rm(comparison principle)} Let $u \in usc(\overline \Omega)$ and $v \in  lsc(\overline \Omega)$, such that $\mathcal M_{\bf a}(D^2u)\ge f$ and $\mathcal M_{\bf a}(D^2v)\le f$ in $\Omega$ are satisfied  in the viscosity sense, respectively, where $\Omega$  is a bounded  open set of $\mathbb{R}^n$, $\mathcal M_{\bf a} \in \overline{\mathcal A}$ and $f$ is a bounded continuous function in $\Omega$.  If $u \le v$ on $\partial \Omega$, then $ u\le v$ in $\Omega$.
\end{thm}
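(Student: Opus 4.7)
The plan is to argue by contradiction via the Crandall--Ishii--Lions doubling-of-variables technique of \cite{USE}, after a preliminary perturbation of $v$ into a strict supersolution. The core technical obstacle is that $\mathcal M_{\bf a}$ is only degenerate (and not uniformly) elliptic: the theorem on sums will produce matrices $X\le Y$, but in the absence of strict monotonicity along the chain $X\le Y$ one obtains only $\mathcal M_{\bf a}(X)\le \mathcal M_{\bf a}(Y)$, with no quantitative gap. The perturbation step is introduced precisely to create such a gap, and it uses only that $|{\bf a}|:=a_1+\dots+a_n>0$, which is automatic for any $\mathcal M_{\bf a}\in \overline{\mathcal A}$.

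Fix $R>\max_{\overline\Omega}|x|$ and, for $\delta>0$, set $v_\delta(x):=v(x)+\delta(R^2-|x|^2)$. The shift identity
\[
\mathcal M_{\bf a}(Y+cI)=\mathcal M_{\bf a}(Y)+c\,|{\bf a}|, \qquad c\in\R,
\]
is immediate from the eigenvalue representation (\ref{Ma})--(\ref{eigenv}). A direct translation of jets gives $(p,Y)\in J^{2,-}v_\delta(y)$ if and only if $(p+2\delta y,\,Y+2\delta I)\in J^{2,-}v(y)$; combined with the supersolution inequality for $v$, this shows that $v_\delta$ is a viscosity supersolution of
\[
\mathcal M_{\bf a}(D^2 v_\delta)\le f-2\delta\,|{\bf a}| \quad \text{in } \Omega.
\]
Moreover $R^2-|x|^2\ge c_0>0$ on $\overline\Omega$, so $u\le v<v_\delta$ on $\partial\Omega$. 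It therefore suffices to show $u\le v_\delta$ on $\overline\Omega$ for each $\delta>0$ and then let $\delta\to 0^+$.

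Suppose by contradiction that $M_\delta:=\max_{\overline\Omega}(u-v_\delta)>0$. For $\epsilon>0$, let $(x_\epsilon,y_\epsilon)\in\overline\Omega\times\overline\Omega$ be a maximum point of
\[
\Phi_\epsilon(x,y):=u(x)-v_\delta(y)-\tfrac{1}{2\epsilon}|x-y|^2.
\]
The usual estimates \cite{USE} give $\epsilon^{-1}|x_\epsilon-y_\epsilon|^2\to 0$, $\Phi_\epsilon(x_\epsilon,y_\epsilon)\to M_\delta$, and convergence of $(x_\epsilon,y_\epsilon)$ along a subsequence to $(\hat x,\hat x)$ with $\hat x$ an interior maximizer (interior because $u-v_\delta<0$ on $\partial\Omega$); hence $(x_\epsilon,y_\epsilon)\in\Omega\times\Omega$ for $\epsilon$ small. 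The theorem on sums then produces $(p_\epsilon,X_\epsilon)$ and $(p_\epsilon,Y_\epsilon)$ in the closures of $J^{2,+}u(x_\epsilon)$ and $J^{2,-}v_\delta(y_\epsilon)$ respectively, with $p_\epsilon=(x_\epsilon-y_\epsilon)/\epsilon$, and such that $X_\epsilon\le Y_\epsilon$ (testing the $2\times 2$ matrix inequality on vectors $(\xi,\xi)$). Coupling the sub- and strict supersolution inequalities with degenerate ellipticity yields
\[
f(x_\epsilon)\le \mathcal M_{\bf a}(X_\epsilon)\le \mathcal M_{\bf a}(Y_\epsilon)\le f(y_\epsilon)-2\delta\,|{\bf a}|,
\]
and passing to the limit $\epsilon\to 0^+$, with $f$ continuous, gives $0\le -2\delta\,|{\bf a}|<0$, a contradiction. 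Thus $u\le v_\delta$ for every $\delta>0$, and $\delta\to 0^+$ concludes the proof.
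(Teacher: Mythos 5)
Your proof is correct and follows essentially the same route as the paper's analytic argument: a quadratic perturbation (you add $\delta(R^2-|x|^2)$ to $v$, the paper adds $\tfrac12\varepsilon|x|^2$ to $u$) to manufacture a strict supersolution/subsolution with gap proportional to $|{\bf a}|$, followed by the Crandall--Ishii--Lions theorem on sums, which produces ordered matrices $X_\epsilon\le Y_\epsilon$ and hence a contradiction via degenerate ellipticity alone. The only difference is which side is perturbed, which is immaterial.
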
 

Letting $v\equiv0$ and $f\equiv0$, we obtain  the following weak maximum principle.

\begin{cor}\label{wMP} {\rm (weak maximum principle)}
Let $u\in usc(\overline{\Omega}),$ where $\Omega$ is a bounded domain of $\mathbb R^n$. If $\mathcal M_{\bf a}(D^2u(x))\geq 0$ in $\Omega$ in the viscosity sense for some $\mathcal M_{\bf a} \in \overline{\mathcal A}$, then
$$
\max_{\overline{\Omega}} u=\max_{\partial \Omega} u. 
$$ 
On the other hand, if $u\in lsc(\overline{\Omega})$ is a viscosity solution of the differential inequality $\mathcal M_{\bf a}(D^2u(x))\leq 0$ in $\Omega$ for some $\mathcal M_{\bf a} \in \overline{\mathcal A}$, then
$$
\min_{\overline{\Omega}} u=\min_{\partial \Omega} u. 
$$ 
\end{cor}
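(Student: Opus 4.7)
The plan is to deduce this directly from the comparison principle of Theorem~\ref{comparison}, following the indication given in the paragraph immediately preceding the statement.

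For the first assertion, set $M:=\max_{\partial\Omega}u$, which is finite since $u\in\mathrm{usc}(\overline\Omega)$ and $\partial\Omega$ is compact, and take the constant function $v\equiv M$. Then $v\in C^2(\Omega)\cap C(\overline\Omega)$ and $D^2v\equiv 0$, so by the positive homogeneity of $\mathcal M_{\bf a}$ (see (\ref{pos-hom}) and the convention (\ref{F(0)=0})) we have $\mathcal M_{\bf a}(D^2v)=\mathcal M_{\bf a}(0)=0$ pointwise, hence $v$ is in particular a viscosity supersolution of $\mathcal M_{\bf a}[v]=0$. Since by construction $u\le v$ on $\partial\Omega$, applying Theorem~\ref{comparison} with $f\equiv 0$ yields $u\le v\equiv M$ in $\overline\Omega$, that is $\max_{\overline\Omega} u\le \max_{\partial\Omega}u$. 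The reverse inequality is trivial because $\partial\Omega\subset\overline\Omega$, and the first claim follows.

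The second assertion, for a lower semicontinuous supersolution, is obtained in the same way taking $v\equiv m:=\min_{\partial\Omega}u$ as a subsolution of $\mathcal M_{\bf a}[v]=0$ and applying Theorem~\ref{comparison} to the pair $(v,u)$; alternatively, one may invoke the duality discussion of Subsection~\ref{dua5} and apply the already established subsolution statement to $-u$, which is a subsolution of the dual equation $\tilde{\mathcal M}_{\bf a}[\,\cdot\,]=0$ (and $\tilde{\mathcal M}_{\bf a}=\mathcal M_{{\bf a}'}\in\overline{\mathcal A}$). There is no genuine obstacle: once Theorem~\ref{comparison} is granted, the only point requiring a brief comment is that $\mathcal M_{\bf a}(0)=0$, so that constants are simultaneously sub- and supersolutions of the homogeneous equation, which makes them admissible comparison functions in this degenerate, non-concave, non-convex setting.
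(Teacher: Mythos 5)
Your argument is correct and coincides with the route the paper itself indicates: the weak maximum principle is obtained as an immediate corollary of the comparison principle (Theorem~\ref{comparison}) by comparing $u$ with a constant function (the paper normalizes to $v\equiv 0$, you take $v\equiv M$, a harmless translation). Both parts are handled the same way, with the second following either directly or by duality as you note, so there is nothing missing.
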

\vskip0.2cm

{\it Proof of Theorem} \ref{comparison}. The case of $f(x)\equiv 0$ is covered in \cite[Theorem 6.5]{HL1}. 

In fact,  considering the Dirichlet set $F=\{X \in \mathcal S^n : \mathcal M_{\bf a}(X) \ge 0\}$ and its dual set $\tilde F=\{X \in \mathcal S^n : \mathcal M_{{\bf a}'}(X) \ge 0\}$ in the geometric setting of Harvey-Lawson \cite{HL1}, then by  our assumptions $u,-v \in$\,usc$(\overline\Omega)$ are of type $F$ and $\tilde F$ in $\Omega$, and our comparison principle is deduced the subaffinity of $u-v$ established there.

\vskip0.2cm 

For sake of completeness, we give an analytic proof based on the device contained in the proof of  \cite[Theorem 3.3]{USE} by Crandall-Ishii-Lions. See also \cite{BM}.
\vskip0.2cm

We have to show that, under the given assumptions, the maximum of $u-v$ must be realized on $\partial\Omega$. 

\vskip0.2cm

i) Firstly,  setting $u_\varepsilon(x) = u(x)+\frac12\varepsilon|x|^2$, we prove that $u_\varepsilon-v$ cannot have a positive maximum in $\Omega$, for all fixed $\varepsilon>0$. 

Actually,
\begin{equation}\label{u-epsilon}
\begin{split}
\mathcal M_{\bf a}(D^2u_\varepsilon):=&\,\sum_{i=1}^na_i\lambda_i(D^2u_\varepsilon)=\sum_{i=1}^na_i\lambda_i(D^2u+\varepsilon I) \\
\ge &\, f(x)+|{\bf a}|\,\varepsilon\,;\\
\mathcal M_{\bf a}(D^2v):=&\,\sum_{i=1}^na_i\lambda_i(D^2v) \le \,f(x),
\end{split}
\end{equation}
where $|{\bf a}|=a_1+\dots+a_n>0$.

Supposing, by contradiction, that $u_\varepsilon-v$ has a positive maximum in $\Omega$ and following the proof of  \cite[Theorem 3.3 ]{USE}, for all $\alpha>0$ there exist points $x_\alpha, y_\alpha \in \Omega$ and matrices $X_\alpha, Y_\alpha \in \mathcal S^n$, such that
\begin{equation}\label{Hessian}
-3\alpha\left(
\begin{array}{ll}
I & 0\\
0 & I
\end{array}\right) \le \left(
\begin{array}{ll}
\hskip-0.1cm X_\alpha & 0\\
0 & \hskip-0.2cm-Y_\alpha
\end{array}\right) \le
3\alpha\left(
\begin{array}{cc}
I & -I\\
\hskip-0.2cm-I &\hskip0.2cm I
\end{array}\right).
\end{equation}
and  
\begin{equation}\label{eq}
\sum_{i=1}^na_i\lambda_i(X_\alpha) \ge f(x_\alpha)+|{\bf a}|\,\varepsilon, \ \  \sum_{i=1}^na_i\lambda_i(Y_\alpha) \le f(y_\alpha).
\end{equation}
Moreover
\begin{equation}\label{limit}
\lim_{\alpha\to \infty}\alpha|x_\alpha-y_\alpha|^2=0.
\end{equation}

Noting that (\ref{Hessian}) implies $X_\alpha \le Y_\alpha$, from (\ref{eq}) we get 
 $$f(x_\alpha)+|{\bf a}|\,\varepsilon \le\sum_{i=1}^na_i\lambda_i(X_\alpha) \le \sum_{i=1}^na_i\lambda_i(Y_\alpha) \le f(y_\alpha).$$
Taking the limit as $\alpha \to \infty$ and using  (\ref{limit}), by the continuity of $f(x)$ we have a contradiction: $\varepsilon \le0$. Therefore $u_\varepsilon-v$ cannot have a positive maximum in $\Omega$.

\vskip0.2cm
ii) From i) it follows, for all $\varepsilon >0$, that $ \max_{\overline \Omega}(u_\varepsilon-v) \le  \max_{\partial \Omega}(u_\varepsilon-v)$. Taking into account that $u \le v$ on $\partial \Omega$, then we have
$$u(x)+\frac12\,\varepsilon\,|x|^2 - v(x) \le \frac12\,\varepsilon\,R^2, \ \ \text{for } x \in \Omega,$$
where $R>0$ is the radius of a ball $B_R$ centered at the origin such that $\Omega \subset B_R$. \\ 
Letting $\varepsilon \to 0^+$, we conclude that $u\le v$ in $\Omega$, as claimed. \qed

\vskip0.2cm

From Corollary \ref{wMP} we deduce the following uniform estimates for viscosity solutions of the equation $\mathcal M_{\bf a}[u]=f$ in  a bounded domain $\Omega$

\begin{prop}\label{unif-est-cor} {\rm (uniform estimate)}
Let $u\in usc(\overline{\Omega}),$ where $\Omega$ is a bounded domain of $\mathbb R^n$. If $\mathcal M_{\bf a}(D^2u(x))\geq f(x)$ in $\Omega$ in the viscosity sense for some $\mathcal M_{\bf a} \in \overline{\mathcal A}$ and $f$ is bounded below in $\Omega$, then
$$
u(x)\le\max_{\partial \Omega} u^ + Cd^2\|f^-\|_{_{L^\infty(\Omega)}} \quad \forall\,x \in \overline \Omega,
$$
where $C$ is a positive constant, which can be chosen equal to $1/|{\bf a}|$.

On the other hand, if $u\in lsc(\overline{\Omega})$ is a viscosity solution of the differential inequality $\mathcal M_{\bf a}(D^2u(x))\leq f(x)$ in $\Omega$ for some $\mathcal M_{\bf a} \in \overline{\mathcal A}$  and $f$ bounded above in $\Omega$, then
$$
u(x)\ge \min_{\partial \Omega} u - Cd^2\|f^+\|_{_{L^\infty(\Omega)}} \quad \forall\,x \in \overline \Omega. 
$$ 
\end{prop}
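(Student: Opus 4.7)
The plan is to reduce both estimates to the comparison principle (Theorem \ref{comparison}) applied against an explicit radial quadratic barrier. Since $\mathcal{M}_{\bf a}$ acts only through $D^2u$, it is translation-invariant, so I would first translate coordinates so that $\Omega$ is contained in the ball $B_d(0)$, where $d$ is the diameter of $\Omega$ (in particular $|x|\le d$ for every $x\in\overline\Omega$).

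For the first (subsolution) estimate, set $M := \sup_{\partial\Omega} u^+$ and consider the candidate
$$v(x) := M + \frac{\|f^-\|_{L^\infty(\Omega)}}{2|{\bf a}|}\,\bigl(d^2-|x|^2\bigr),\qquad |{\bf a}|:=a_1+\dots+a_n>0.$$
A direct computation gives $D^2v \equiv -|{\bf a}|^{-1}\|f^-\|_{L^\infty(\Omega)}\,I$, so every eigenvalue of $D^2v$ equals $-|{\bf a}|^{-1}\|f^-\|_{L^\infty(\Omega)}$ and hence $\mathcal{M}_{\bf a}(D^2v) = -\|f^-\|_{L^\infty(\Omega)}$ in $\Omega$. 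Since $f(x)\ge -\|f^-\|_{L^\infty(\Omega)}$ pointwise, the given $u$ is automatically a viscosity subsolution of the constant-right-hand-side equation $\mathcal{M}_{\bf a}(D^2\cdot)=-\|f^-\|_{L^\infty(\Omega)}$, while $v$ is a classical (hence viscosity) supersolution of the same equation. On $\partial\Omega$ one has $v\ge M\ge u^+\ge u$. Applying Theorem \ref{comparison} with the continuous (in fact constant) right-hand side $-\|f^-\|_{L^\infty(\Omega)}$ yields $u\le v$ in $\Omega$, whence
$$u(x)\le M + \frac{d^2}{2|{\bf a}|}\,\|f^-\|_{L^\infty(\Omega)}\qquad \forall\,x\in\overline\Omega,$$
which is stronger than the claimed bound with $C=1/|{\bf a}|$.

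The supersolution inequality follows by the mirror-image construction: set $m:=\inf_{\partial\Omega}u$ and use the barrier $w(x):=m-\frac{\|f^+\|_{L^\infty(\Omega)}}{2|{\bf a}|}(d^2-|x|^2)$, which is a classical viscosity subsolution of $\mathcal{M}_{\bf a}(D^2\cdot)=\|f^+\|_{L^\infty(\Omega)}$ and satisfies $w\le m\le u$ on $\partial\Omega$; Theorem \ref{comparison} then gives $w\le u$ in $\Omega$ and the desired lower bound. I do not foresee a genuine obstacle: the construction is dictated by the requirement that the barrier be radial and quadratic with prescribed Hessian eigenvalues, and the choice of the coefficient $1/(2|{\bf a}|)$ is forced by $\mathcal{M}_{\bf a}(cI)=c\,|{\bf a}|$. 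The only subtle point is that Theorem \ref{comparison} is formulated for continuous right-hand sides, but replacing $f$ by the constant $\mp\|f^\mp\|_{L^\infty(\Omega)}$ on the appropriate side brings us precisely into that setting, so no extra regularity on $f$ is required beyond boundedness below (resp. above).
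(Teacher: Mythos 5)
Your proof is correct and takes essentially the same approach as the paper: both rely on a quadratic barrier whose Hessian is a constant multiple of the identity, exploiting the identity $\mathcal{M}_{\bf a}(X+cI)=\mathcal{M}_{\bf a}(X)+c|{\bf a}|$. The only cosmetic difference is that you compare $u$ directly against the supersolution barrier via Theorem \ref{comparison}, whereas the paper adds the paraboloid $\tfrac{K^-}{2|{\bf a}|}|x|^2$ to $u$ to produce a subsolution of the homogeneous equation and then applies the weak maximum principle (Corollary \ref{wMP}), which is itself a corollary of the same comparison theorem.
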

\begin{proof} Let us prove the first one. Setting $K^-=\|f^-\|_{L^\infty(\Omega)}$, the function $v=u+\frac{K^- }{2|{\bf a}|}|x|^2$ is a subsolution of the equation $\mathcal M_{\bf a}[v]=0$. By Corollary \ref{wMP} we get $v(x) \le \max_{\overline \Omega}v$, so that
$$
u(x) \le v(x) \le \max_{\partial \Omega} u +\frac{K^- }{2|{\bf a}|}d^2,
$$
which yields the first inequality of the estatement.
\end{proof}

\vskip0.2cm

\subsection{Existence and uniqueness}\label{existuniq}
\vskip0.2cm

As a consequence of the above comparison principle, we can also prove an existence and uniqueness result for the Dirichlet problem
 in bounded domains $\Omega$ via the Perron method, assuming that $\Omega$ has a uniform exterior cone condition, see \cite{CCKS}: there exist $\theta_0 \in (0,\pi)$ and $r_0>0$ so that for every $y \in \partial\Omega$ there is a rotation $\mathcal R = \mathcal R(y)$ such that 
\begin{equation}\label{exterior}
\overline\Omega \cap B_{r_0}(y)\subset y + \mathcal R\Sigma_{\theta_0},
\end{equation}
where
\begin{equation}\label{externalcone}
\Sigma_{\theta_0} = \{x \in \mathbb R^n : x_n \ge |x|\cos{\theta_0}\}.
\end{equation}

\begin{thm}\label{exist-uniq} Let $\Omega$ be a bounded domain of $\mathbb R^n$ endowed with a uniform exterior cone condition. Let $g$ be a continuous function on the boundary $\partial \Omega$, and $f$ be a continuous and bounded function in $\Omega$. Then for  $\mathcal M_{\bf a} \in \mathcal A$ the Dirichlet problem
\begin{equation}\label{DP}
\left\{
\begin{array}{ll}
\mathcal M_{\bf a}(D^2u) = f & \text{\rm in} \ \Omega\\
u=g & \text{\rm on} \ \partial\Omega
\end{array}\right.
\end{equation}
 has a unique viscosity solution $u \in C(\overline\Omega)$.
\end{thm}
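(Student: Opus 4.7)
Uniqueness is immediate from Theorem~\ref{comparison}: applied to two continuous viscosity solutions $u_1,u_2$ of (\ref{DP}) it yields both $u_1\le u_2$ and $u_2\le u_1$ in $\overline\Omega$. For existence I would apply Perron's method. Proposition~\ref{unif-est-cor} provides a uniform a~priori bound $v\le \sup_{\partial\Omega}g + C\|f^-\|_{L^\infty}$ on every admissible subsolution $v\le g|_{\partial\Omega}$, so the Perron function
\[
 u(x):=\sup\set{v(x):\ v\in\mathrm{usc}(\overline\Omega),\ \mathcal M_{\bf a}(D^2v)\ge f\text{ in }\Omega,\ v\le g\text{ on }\partial\Omega}
\]
is well defined and bounded; the admissible class is non-empty since $v(x)=\epsilon|x|^2-M$ is a classical subsolution below $g$ on $\partial\Omega$ for $\epsilon,M$ large enough (depending on $\|f^+\|_\infty/|{\bf a}|$ and on $\|g\|_\infty,\mathrm{diam}(\Omega)$).

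The Perron machinery itself — closedness of the subsolution property under suprema and under upper semicontinuous envelopes, together with the standard bump-lemma argument that rules out any interior failure of $u_*$ to be a supersolution — is purely operator-theoretic and applies verbatim to $\mathcal M_{\bf a}$ once it is viewed as a continuous degenerate elliptic nonlinearity. Hence $u^*$ is a viscosity subsolution and $u_*$ a viscosity supersolution of $\mathcal M_{\bf a}[u]=f$ in $\Omega$. Once $u^*\le g\le u_*$ on $\partial\Omega$ has been established, Theorem~\ref{comparison} combined with the trivial $u_*\le u\le u^*$ forces $u^*=u_*=u\in C(\overline\Omega)$, which is then the desired solution.

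The crux — and the place where $\mathcal M_{\bf a}\in\mathcal A$ rather than merely $\overline{\mathcal A}$ genuinely enters — is the construction of local boundary barriers. Given $y\in\partial\Omega$, the uniform exterior cone condition (\ref{exterior})–(\ref{externalcone}) produces (by a standard geometric reduction) an exterior ball $B_{\rho'}(x_0)$ tangent to $\overline\Omega$ at $y$ with radius $\rho'>0$ depending only on $\theta_0$ and $r_0$, so that $r(x):=|x-x_0|\ge\rho'$ on $\overline\Omega\cap B_{r_0}(y)$ with equality only at $x=y$. For the radial test function $\psi_\alpha(x):=|x-x_0|^{-\alpha}$ a direct computation gives Hessian eigenvalues $\alpha(\alpha+1)r^{-\alpha-2}$ (radial, the largest) and $-\alpha r^{-\alpha-2}$ (tangential, multiplicity $n-1$, the smallest), whence
\[
\mathcal M_{\bf a}(D^2\psi_\alpha)=\alpha r^{-\alpha-2}\bigl[a_n(\alpha+1)-(a_1+\cdots+a_{n-1})\bigr],
\]
which is strictly positive for $\alpha$ large, thanks to $a_n>0$; dually
\[
\mathcal M_{\bf a}(-D^2\psi_\alpha)=\alpha r^{-\alpha-2}\bigl[-a_1(\alpha+1)+(a_2+\cdots+a_n)\bigr]
\]
is strictly negative for $\alpha$ large, thanks to $a_1>0$. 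Translating by constants, $w_-:=\psi_\alpha-(\rho')^{-\alpha}$ and $w_+:=(\rho')^{-\alpha}-\psi_\alpha$ are a strict local classical subsolution and supersolution, respectively, both vanishing at $y$ and of the correct sign on $\overline\Omega$ near $y$. Scaling them by constants large enough to absorb $\|f\|_\infty$ and the continuity modulus of $g$ at $y$, and invoking Theorem~\ref{comparison} in a small neighbourhood of $y$, then pins $u(x)\to g(y)$ as $\Omega\ni x\to y$.

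This barrier step is the main obstacle: the joint positivity $a_1,a_n>0$ is indispensable, and is precisely the reason the theorem cannot be pushed to $\overline{\mathcal A}$. Indeed, for the partial trace operators $\mathcal P^\pm_k$ with $k<n$ the leading $\alpha^2$ coefficient in the above formula vanishes, the radial barrier fails, and continuous solvability of the Dirichlet problem is in fact obstructed on general cone-regular domains.
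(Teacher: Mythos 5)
Your skeleton (Perron construction, comparison principle for uniqueness, and the observation that the boundary‐barrier step is exactly where $a_1>0$ and $a_n>0$ enter) is sound and matches the paper's. The gap is in the barrier construction itself. You claim that the uniform exterior cone condition produces, ``by a standard geometric reduction,'' an exterior ball $B_{\rho'}(x_0)$ tangent to $\overline\Omega$ at each boundary point $y$. This is false: the exterior cone condition is strictly weaker than the exterior ball condition. Take, for example, $\Omega = B_1\setminus\overline{\Sigma_{\pi/4}}$ with $\Sigma_{\pi/4}=\set{x:x_n\ge |x|/\sqrt2}$. At $y=0$ one has $\overline\Omega\cap B_{r_0}(0)\subset \mathcal R\Sigma_{3\pi/4}$ for a suitable rotation $\mathcal R$, so the exterior cone condition holds with $\theta_0=3\pi/4$, but the complement of $\Omega$ near $0$ is the thin cone $\Sigma_{\pi/4}$, which contains no ball touching the origin. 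The radial power barrier $\psi_\alpha(x)=|x-x_0|^{-\alpha}$ therefore has no exterior center $x_0$ to anchor to, and your local barrier step fails at reflex corners. In short, your argument actually proves the theorem only under the exterior sphere condition (S) of Section \ref{intro}, which is strictly stronger than the hypothesis of Theorem \ref{exist-uniq}.

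The paper sidesteps this difficulty entirely: it does not build barriers for $\mathcal M_{\bf a}$ at all. Instead, using the auxiliary operators $\mathcal M_{\overline{\bf a}}$, $\mathcal M_{\underline{\bf a}}$ (inequalities (\ref{Pucci-up})--(\ref{Pucci-down})) it establishes the Pucci sandwich
$\mathcal M^-_{a^*/n,|{\bf a}|}(X)\le \mathcal M_{\bf a}(X)\le \mathcal M^+_{a^*/n,|{\bf a}|}(X)$,
and then solves the Pucci Dirichlet problems (\ref{Dir-sub}), (\ref{Dir-super}) with data $g$ and right-hand sides $\pm K$ by citing \cite[Proposition 3.2]{CCKS}, which already handles the full exterior cone condition (by cone-adapted barriers of Miller type, done once and for all at the level of Pucci extremal operators). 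The solutions $\underline u$, $\overline u$ of those problems are then the Perron sub/supersolution for $\mathcal M_{\bf a}$, and they attain $g$ continuously by construction. If you wish to keep your direct-barrier route, you would have to replace $\psi_\alpha$ by a barrier of the form $|x-y|^\alpha h(\theta)$ with $h$ solving an appropriate spherical problem on the cone section, verifying the sub/supersolution inequality for $\mathcal M_{\bf a}$ in that geometry; this is doable but considerably more work than invoking the Pucci comparison.

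Two smaller remarks. First, your closing heuristic about $\mathcal P^\pm_k$ with $k<n$ is basically right but slightly misstated: for $\mathcal P^+_k$ (so $a_1=0$) the lower barrier $\psi_\alpha$ is fine, it is the dual direction $\mathcal M_{\bf a}(-D^2\psi_\alpha)=\alpha r^{-\alpha-2}\bigl[-a_1(\alpha+1)+(a_2+\cdots+a_n)\bigr]=k\alpha r^{-\alpha-2}$ whose $\alpha^2$ coefficient vanishes, so one cannot force it to be negative. Second, the paper's approach has the extra payoff that the same Pucci sandwich later feeds the local maximum principle and weak Harnack inequality (Theorems \ref{lmp}, \ref{w-H}), so the comparison with $\mathcal M^\pm_{a^*/n,|{\bf a}|}$ is the unifying tool throughout Sections \ref{aux}--\ref{harnack}.
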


\begin{proof} According to the Perron method \cite[Theorem 4.1]{USE}, we need a comparison principle, and the existence of a subsolution and a supersolution of the equation $\mathcal M_{\bf a}(D^2u)=f$.

Since the comparison principle holds by Theorem \ref{comparison}, we only need to look for a viscosity subsolution $\underline u\in$\,usc$(\overline \Omega)$  and a viscosity supersolution  $\overline u\in$\,lsc$(\overline \Omega)$ of the equation $\mathcal M_{\bf a}(D^2u)=f(x)$ such that $\underline u=g=\overline u$ on $\partial \Omega$.

To do this, we will use the following inequalities, see (\ref{Ma-elliptic}) and (\ref{Ma-lip}):

\begin{equation}\label{Pucci-up}
\begin{split}
\mathcal M_{\bf a}(X) &= a_1\lambda_1(X)+\dots+a_n\lambda_n(X)\\
&= n\frac{a_1}{n}\lambda_1(X)+\dots+a_n\lambda_n(X)\\
&\leq \frac{a_1}{n}\lambda_1(X)+\sum_{i=2}^{n}\left(\frac{a_1}{n}+a_i \right)\lambda_i(X)\\
&=: \mathcal M_{\overline{\bf a}}(X) 
\end{split}
\end{equation}
and
\begin{equation}\label{Pucci-down}
\begin{split}
\mathcal M_{\bf a}(X) &= a_1\lambda_1(X)+\dots+a_n\lambda_n(X)\\
&= a_1\lambda_1(X)+\dots+n\frac{a_n}{n}\lambda_n(X)\\
&\geq \sum_{i=1}^{n-1}\left(a_i+\frac{a_n}{n}\right)\lambda_i(X)+\frac{a_n}{n}\lambda_n(X) \\
&=: \mathcal M_{\underline{\bf a}}(X).
\end{split}
\end{equation}

If $\mathcal M_{\bf a} \in \mathcal A_1$, then  $\mathcal M_{\overline{\bf a}}$ is uniformly elliptic with  ellipticity constants
\begin{equation}\label{elliptic-up} 
\begin{split}
\overline\lambda= \frac{a_1}{n},  \ \ \overline\Lambda= \frac{a_1}{n}+\max_{2 \le i \le n}a_i 
\end{split}
\end{equation}
so that
\begin{equation}\label{Ma-Pucci+asy}
\mathcal M_{\bf a}(X) \le \mathcal M_{\overline{\bf a}}(X) \le \mathcal M_{\frac{a_1}{n},|{\bf a}|}^+
\end{equation}
and, if  $\mathcal M_{\bf a} \in \mathcal A_n$, then $\mathcal M_{\underline{\bf a}}$ is uniformly elliptic with ellipticity constants
\begin{equation}\label{elliptic-down} 
\begin{split}
\underline\lambda=\frac{a_n}{n},  \ \ \underline\Lambda=  \frac{a_n}{n}+\max_{1 \le i \le n-1}a_i,
\end{split}
\end{equation}
so that
\begin{equation}\label{Ma-Pucci-asy}
\mathcal M_{\bf a}(X) \ge \mathcal M_{\underline{\bf a}}(X) \ge \mathcal M_{\frac{a_n}{n},|{\bf a}|}^-(X).
\end{equation}

Therefore, if  $\mathcal M_{\bf a} \in \mathcal A$, and $\lambda^*$ and $\Lambda^*$ are positive numbers such that
\begin{equation}\label{elliptic-const}
\begin{split}
&\lambda^* \le \min(\underline \lambda, \overline \lambda)=\frac{a^*}{n}\equiv \frac{\min(a_1,a_n)}{n} \\
&\Lambda^* \ge \max(\underline \Lambda, \overline \Lambda)\ \ge  |{\bf a}| \equiv a_1+\dots+a_n,
\end{split}
\end{equation}
by the extremality properties (\ref{ue0}) of Pucci operators, from (\ref{Ma-Pucci+asy}) and (\ref{Ma-Pucci-asy}) we have
\begin{equation}\label{Ma-Pucci}
\mathcal M^-_{\frac{a^*}{n},|{\bf a}|}(X) \le \mathcal M_{\bf a}(X) \le \mathcal M^+_{\frac{a^*}{n},|{\bf a}|}(X).
\end{equation}

Next, setting $K=\sup_\Omega |f|$, we solve by \cite[Proposition 3.2]{CCKS} the Dirichlet problems
\begin{equation}\label{Dir-sub}
\left\{
\begin{array}{ll}
\mathcal M^-_{\frac{a^*}{n},|{\bf a}|}(D^2\underline u)=K & \hbox{\text in} \ \Omega\\
\underline u = g & \hbox{\text on} \ \partial\Omega
\end{array}
\right.
\end{equation}
and
\begin{equation}\label{Dir-super}
\left\{
\begin{array}{ll}
\mathcal M^+_{\frac{a^*}{n},|{\bf a}|}(D^2\overline u)=-K & \hbox{\text in} \ \Omega\\
\overline u = g & \hbox{\text on} \ \partial\Omega
\end{array}
\right.
\end{equation}
Since obviously $-K \le f(x) \le K$ for all $x \in \Omega$, from (\ref{Ma-Pucci}) it follows that $\underline u$ and $\overline u$ provide a subsolution and a supersolution that we were searching for, concluding the proof.
\end{proof}

An existence and uniqueness result is provided  for all the class $\mathcal A$ by \cite[Theorem 6.2]{HL1} for smooth boundaries.

A weaker condition can be obtained from  \cite{BR}, where the authors consider in detail the case ${\bf a}={\bf e}_j$, namely the equation $\lambda_j[u]=0$, and prove an existence and uniqueness theorem for the Dirichlet problem (\ref{DP}) with a sharp geometric condition on the boundary of $\Omega$, depending on $j$. 

From there, we take a sufficient condition to solve the Dirichlet problem for any equation  $\lambda_j(D^2u)=0$, $j=1,\dots,n$: given  $y \in \partial \Omega$,  for every $r > 0$ there exists $\delta> 0$ such that, for every $x\in B_\delta(y)$ and direction $v \in \mathbb R^n$ ($|v|=1$),
$$(x+\mathbb R\,v)\cap B_r(y) \cap \partial\Omega \neq \emptyset. \eqno(G_1)$$

This condition does not require smooth boundary, but it is nevertheless stronger than the exterior cone property.

\begin{thm}\label{exist-uniq'} Let $\Omega$ be a bounded domain of $\mathbb R^n$ satisfying condition $(G_1)$. Let $g$ be a continuous function on the boundary $\partial \Omega$, and $f$ be a continuous and bounded function in $\Omega$. Then for  $\mathcal M_{\bf a} \in \overline{\mathcal A}$ the Dirichlet problem
\begin{equation}\label{DP}
\left\{
\begin{array}{ll}
\mathcal M_{\bf a}(D^2u) = f & \text{\rm in} \ \Omega\\
u=g & \text{\rm on} \ \partial\Omega
\end{array}\right.
\end{equation}
 has a unique viscosity solution $u \in C(\overline\Omega)$.
\end{thm}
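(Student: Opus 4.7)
The proof adopts the Perron method, as in Theorem~\ref{exist-uniq}, but replacing the Pucci sandwich (unavailable when $\mathcal M_{\bf a}\in\overline{\mathcal A}\setminus\underline{\mathcal A}$) by a direct boundary barrier built from the single-eigenvalue Dirichlet theory of \cite{BR}. Uniqueness follows from the comparison principle of Theorem~\ref{comparison}, valid on the whole class $\overline{\mathcal A}$. For existence I apply \cite[Theorem 4.1]{USE}: since comparison is in hand, it suffices to exhibit global sub/supersolutions of $\mathcal M_{\bf a}(D^2 u)=f$ together with an upper and lower barrier at every $y\in\partial\Omega$.

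Global bounds are straightforward. Let $M=\|g\|_{L^\infty(\partial\Omega)}$, $K=\|f\|_{L^\infty(\Omega)}$, and pick $x_0$, $R$ with $\Omega\subset B_R(x_0)$. The paraboloids $\overline v(x)=M+\frac{K}{2|{\bf a}|}(R^2-|x-x_0|^2)$ and $\underline v=-\overline v$ have constant Hessians $\mp\frac{K}{|{\bf a}|}I$, whence $\mathcal M_{\bf a}(D^2\overline v)=-K\le f$, $\mathcal M_{\bf a}(D^2\underline v)=K\ge f$, with $\underline v\le g\le\overline v$ on $\partial\Omega$.

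The heart of the argument is the construction of a boundary barrier. Fix any index $j$ with $a_j>0$, which exists because $\overline a>0$. By \cite{BR}, condition $(G_1)$ guarantees solvability in $C(\overline\Omega)$ of the Dirichlet problem for the single-eigenvalue operator $\lambda_j$ and, more importantly, furnishes at each $y\in\partial\Omega$ a smooth local $\lambda_j$-barrier $\varphi_y$: a function on $\overline\Omega\cap\overline{B_\rho(y)}$ with bounded Hessian, vanishing at $y$, strictly positive elsewhere, and satisfying $\lambda_j(D^2\varphi_y)\le -1$ in the viscosity sense. I upgrade $\varphi_y$ to an $\mathcal M_{\bf a}$-barrier by adding an affine and a concave quadratic correction: for small $\varepsilon>0$ set $w_{y,\varepsilon}^+(x)=g(y)+\varepsilon+\sigma\,\varphi_y(x)+\tau\bigl(\rho^2-|x-y|^2\bigr)$, so that $D^2 w_{y,\varepsilon}^+=\sigma D^2\varphi_y-2\tau I$ and every eigenvalue of $\sigma D^2\varphi_y$ is uniformly shifted down by $2\tau$. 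Using the Hessian bound on $\varphi_y$, $\mathcal M_{\bf a}(D^2 w_{y,\varepsilon}^+)=\sigma\mathcal M_{\bf a}(D^2\varphi_y)-2\tau|{\bf a}|$, which is made $\le -K\le f$ by choosing $\tau$ large compared to $\sigma$. The parameter $\sigma$ is fixed large enough that $w_{y,\varepsilon}^+\ge g$ on $\partial\Omega\setminus B_\rho(y)$ (exploiting the positive lower bound of $\varphi_y$ there), while $\varepsilon$ and $\rho$ are taken small so that $w_{y,\varepsilon}^+(y)=g(y)+\varepsilon+\tau\rho^2\to g(y)$ as $\varepsilon,\rho\to 0$; on $\partial\Omega\cap B_\rho(y)$ the inequality $w_{y,\varepsilon}^+\ge g$ follows from the uniform continuity of $g$ and the nonnegativity of $\sigma\varphi_y+\tau(\rho^2-|x-y|^2)$. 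Taking $\min(w_{y,\varepsilon}^+,\overline v)$ (still a supersolution) extends the barrier to all of $\overline\Omega$, and a symmetric construction with $-\varphi_y$ produces the lower barrier.

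The Perron envelope $u(x)=\sup\{v(x):\underline v\le v\le\overline v,\;v\text{ viscosity subsolution}\}$ is then a viscosity solution of $\mathcal M_{\bf a}(D^2 u)=f$ in $\Omega$, and the family $\{w_{y,\varepsilon}^\pm\}$ forces $u(y)=g(y)$ continuously at every $y\in\partial\Omega$, giving $u\in C(\overline\Omega)$. The main obstacle I anticipate is precisely the boundary barrier step: since $\mathcal M_{\bf a}$ may be completely degenerate in up to $n-1$ eigen-directions, a single $\lambda_j$-barrier does not dominate $\mathcal M_{\bf a}$ by itself, and one must borrow sufficiently regular $\lambda_j$-barriers from \cite{BR} and absorb the uncontrolled eigenvalue contributions via the concave quadratic correction, which is exactly where condition $(G_1)$ is indispensable.
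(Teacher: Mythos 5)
Your overall framework (Perron method plus the comparison principle of Theorem~\ref{comparison}) matches the paper's, but the barrier step is handled very differently, and as written it contains a genuine gap. The paper's construction is far more direct: it uses the elementary two-sided bound $|{\bf a}|\,\lambda_1(X)\le\mathcal M_{\bf a}(X)\le|{\bf a}|\,\lambda_n(X)$, sets $K=\sup_\Omega|f|$, and invokes \cite[Theorem~1]{BR} under condition $(G_1)$ to solve in $C(\overline\Omega)$ the two scalar Dirichlet problems $|{\bf a}|\,\lambda_1(D^2\underline u)=K$ with $\underline u=g$ on $\partial\Omega$, and $|{\bf a}|\,\lambda_n(D^2\overline u)=-K$ with $\overline u=g$ on $\partial\Omega$. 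The two-sided bound then gives $\mathcal M_{\bf a}(D^2\underline u)\ge K\ge f$ and $\mathcal M_{\bf a}(D^2\overline u)\le -K\le f$ in the viscosity sense, so $\underline u$ and $\overline u$ are the required Perron sub/supersolution pair with the \emph{exact} boundary data $g$; no local barrier construction is needed.

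Your proposal instead hinges on a postulated ``smooth local $\lambda_j$-barrier $\varphi_y$ with bounded Hessian,'' and this is where the argument breaks down. First, \cite{BR} provides, under $(G_1)$, continuous viscosity solutions of the Dirichlet problem for $\lambda_j$; it does not supply $C^{1,1}$ or $C^2$ barriers vanishing precisely at a boundary point, strictly positive nearby, with uniformly bounded Hessian, and such regularity is not expected for these fully degenerate operators. Second, and more tellingly, your computation of $\mathcal M_{\bf a}(D^2 w_{y,\varepsilon}^+)$ never actually uses the barrier inequality $\lambda_j(D^2\varphi_y)\le -1$: once you assume a function with bounded Hessian that vanishes at $y$ and is strictly positive elsewhere, the supersolution inequality is forced purely by taking $\tau$ large compared to $\sigma$, and at that point neither \cite{BR} nor $(G_1)$ plays any role. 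Thus the existence of such a $\varphi_y$ under $(G_1)$ is precisely what your argument needs but does not establish --- and proving it would amount to a boundary regularity result that is not available. There is also a quantitative worry: $\sigma$ must grow as $\rho\to 0$ (to dominate $g$ on $\partial\Omega\setminus B_\rho(y)$), $\tau$ grows with $\sigma$, and hence $w_{y,\varepsilon}^+(y)=g(y)+\varepsilon+\tau\rho^2$ need not converge to $g(y)$. All of these difficulties disappear along the paper's route, where \cite{BR} is used to solve the full auxiliary Dirichlet problems for $\lambda_1$ and $\lambda_n$ rather than to build local barriers.
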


\begin{proof} Following the same lines of the proof of Theorem \ref{exist-uniq}, we only need to look for a viscosity subsolution $\underline u\in$\,usc$(\overline \Omega)$  and a viscosity supersolution  $\overline u\in$\,lsc$(\overline \Omega)$ of the equation $\mathcal M_{\bf a}(D^2u)=f(x)$ such that $\underline u=g=\overline u$ on $\partial \Omega$.

To do this, we observe this time:
\begin{equation}\label{lambdan-up}
\mathcal M_{\bf a}(X) = a_1\lambda_1(X)+\dots+a_n\lambda_n(X) \le |{\bf a }| \lambda_n(X)
\end{equation}
and
\begin{equation}\label{lambda1-down}
\mathcal M_{\bf a}(X) = a_1\lambda_1(X)+\dots+a_n\lambda_n(X) \ge |{\bf a }| \lambda_1(X)\,.
\end{equation}

\vskip0.2cm
Next, setting $K=\sup_\Omega |f|$, we solve by \cite[Theorem 1]{BR} the Dirichlet problems
\begin{equation}\label{Dir-sub}
\left\{
\begin{array}{ll}
 |{\bf a }|\lambda_1(D^2\underline u)=K & \hbox{\text in} \ \Omega\\
\underline u = g & \hbox{\text on} \ \partial\Omega
\end{array}
\right.
\end{equation}
and
\begin{equation}\label{Dir-super}
\left\{
\begin{array}{ll}
|{\bf a }|\lambda_n(D^2\overline u)=-K & \hbox{\text in} \ \Omega\\
\overline u = g & \hbox{\text on} \ \partial\Omega
\end{array}
\right.
\end{equation}
As in the proof of Theorem \ref{exist-uniq}, $\underline u$ and $\overline u$ provide a subsolution and a supersolution, concluding the proof.
\end{proof}

\vskip0.3cm

\subsection{Radial solutions}\label{Radial_solutions}

\vskip0.2cm

We compute $\mathcal M$ on radial functions  $u(x)=v(|x|)$. Suppose   $v$ is $C^2,$ we recall that for $x \neq 0$:
 \begin{equation*}
\begin{split}
&Du(x) = v'(|x|)\,\frac{x}{|x|}\\
& D^2u(x)=v''(|x|)\,\tfrac{x}{|x|}\otimes \tfrac{x}{|x|}+\tfrac{v'(|x|)}{|x|}(I-\tfrac{x}{|x|}\otimes \tfrac{x}{|x|}),
\end{split}
 \end{equation*}
  where $\frac{x}{|x|}\otimes \frac{x}{|x|}\geq 0,$  $I-\frac{x}{|x|}\otimes \frac{x}{|x|}\geq 0$ and
\begin{equation*}
\begin{split}
& \left\langle \tfrac{x}{|x|}\otimes \tfrac{x}{|x|}h,h\right\rangle=\left\langle \tfrac{x}{|x|},h\right\rangle^2,\\
& \left\langle (I-\tfrac{x}{|x|}\otimes \tfrac{x}{|x|})h,h\right\rangle=|h|^2-\left\langle \tfrac{x}{|x|},h\right\rangle^2.\end{split}
 \end{equation*}
As a  consequence, $\frac{x}{|x|}$ is eigenvector of $\frac{x}{|x|}\otimes \frac{x}{|x|}$ with eigenvalue $1$, and  of  $I-\frac{x}{|x|}\otimes \frac{x}{|x|}$ with eigenvalue $0$. Conversely, all non-zero vectors orthogonal to $\frac{x}{|x|}$ are eigenvectors of $\frac{x}{|x|}\otimes \frac{x}{|x|}$ with  eigenvalue $0$ and of $I-\frac{x}{|x|}\otimes \frac{x}{|x|}$ with eigenvalue $1$. 

 It follows that
  $$
  \lambda_1(D^2u(x))+\lambda_n(D^2u(x))=v''(|x|)+\frac{v'(|x|)}{|x|}.
  $$

From this we deduce useful properties which are collected in the following remark. 

\begin{rem}\label{radial}
\begin{itemize}

\item[  ]
 
 \item[(i)]   The operator $\mathcal M$ is linear on the radial functions $u(x)=v(|x|)$.
\item[(ii)]  Any function of the form
$$\varphi(x)=a+b\log |x|,$$
with $a$ and $b$ constant,  is a solution of $\mathcal M[u]=0$ in $\mathbb{R}^n\setminus\{0\}.$

 \item[(iii)]   Recall that the $k-$th Hessian operator, $ k=1,\dots,n$, for radial functions is:
  $$
S_k(D^2u)=\binom{n-1}{k-1}\left(\frac{v'}{|x|}\right)^{k-1}\left(v''+\frac{n-k}{k}\frac{v'}{|x|}\right).
$$ 
In case $n=2k$ the radial  solutions of the equation $S_{\frac{n}{2}}(D^2u)=0$  are just  the radial solutions of $\mathcal M(D^2u)=0$.\qed
 \end{itemize}
\end{rem}

Recalling that $|{\bf a}|=a_1+\dots+a_n$, let $\hat a_j = |{\bf a}|-a_j$, $j=1,\dots,n$.
 More generally, for $\mathcal M_{\bf a} \in \overline{\mathcal A}$ the non-constant radial solutions in $\mathbb R^n\backslash\{0\}$, up to a multiplicative constant, are 
\begin{equation}\label{radialsoln}
\varphi(x)=\left\{
\begin{array}{ll}
|x|^{-\gamma_n} & \hbox{\text if} \  \hat a_n>a_n \vspace{0.1cm}\\ 
\log|x|^{-1} & \hbox{\text if} \  \hat a_n= a_n \ \vspace{0.1cm}\\ 
|x|^{\gamma_1} & \hbox{\text if} \  a_1>\hat a_1 \vspace{0.1cm}\\ 
\log|x| & \hbox{\text if} \  a_1=\hat a_1
\end{array}
\right.
\end{equation}
where  $\gamma_n =\displaystyle \frac{\hat a_n}{a_n}-1$.


\section{The ABP estimate}\label{abp}

The celebrated ABP estimate provides a uniform estimate for the solution of an elliptic equation $F[u]=f$ with the $L^n$-norm of $f$. The original inequality, for linear uniformly elliptic operators in bounded domains, goes back to Alexandroff \cite{ALE,ALE2}, but it already appears in Bakel'man \cite{BAK}. A different version has been later obtained by Pucci \cite{PUC}. 

In \cite{CAB} it was also proved for the first time an ABP estimate for solutions in $W^{2,p}_{\text{loc}}(\Omega)$ of the equation $F[u]=f$ with $f \in L^p$ and $p \in (n/2,n)$. A result of this kind is known in the framework of $L^p$-viscosity solutions \cite{CCKS} as the generalized maximum principle, which can be found in \cite{FOK} and \cite{CS} in the fully nonlinear uniformly elliptic case.

It is worth noticing that an ABP estimate for degenerate elliptic equations of $p$-Laplacian type has been proved by Imbert \cite{IMB}.

An extension of this inequality to unbounded domains $\Omega$ of cylindrical type for bounded solutions in $W^{2,n}_{\text{loc}}(\Omega)$ is due to Cabr\'e \cite{CAB}. By domains of cylindrical type we intend here a measure-geometric condition, which is satisfied by cylinders and goes back to a famous paper of Berestycki-Nirenberg-Vardhan \cite{BNV}, containing a characterization of the weak maximum principle. In subsequent papers the results of \cite{CAB} have been generalized to domains of conical type \cite{CV, VIT,VIA} and to viscosity solutions of  fully nonlinear uniformly elliptic equations \cite{CDLV}, and then to different classes of degenerate elliptic equations \cite{BCDV,CDV2,CDV3}.
\vskip0.2cm

\vskip0.2cm

The proof of the ABP estimates of Theorem \ref{ABP-two} is based on the geometrical argument used in the proof of Theorem 9.1 of the Gilbarg-Trudinger's book \cite{GT} for classical solutions.

We denote by  $\Gamma^+_u$ the upper convex envelope of $u$, the smallest concave function greater than $u$  in $\Omega$, and by $\Gamma^-_u$ the lower convex envelope of $u$, the largest convex function smaller than $u$  in $\Omega$.
\vskip0.2cm

\begin{lem}\label{ABP-classic}
Let $\Omega$ be a bounded domain with diameter $d$, and $\mathcal M_{\bf a} \in \mathcal A_1$. For every $u\in C^2(\Omega)\cap C^0(\bar{\Omega})$ such that $u \le 0$ on $\partial \Omega$  we have
\begin{equation}\label{ABP-classic-sub}
\sup_{\Omega}u^+ \le \frac{1}{a_1}\frac{d}{\omega_n^{1/n}} \left\|\mathcal M_{\bf a}(D^2 u(x))^-\right\|_{L^n(\{\Gamma_u^+=u\}}\,,
\end{equation}
where $\omega_n$ denotes the Lebesgue measure of the $n$-dimensional unit ball. 

On the other hand, let us assume $\mathcal M_{\bf a} \in  \mathcal A_n$. For every $u\in C^2(\Omega)\cap C^0(\bar{\Omega})$ such that $u \ge 0$ on $\partial \Omega$   we have
\begin{equation}\label{ABP-classic-super}
\sup_{\Omega}u^- \le \frac{1}{a_n}\frac{d}{\omega_n^{1/n}} \left\|\mathcal M_{\bf a}(D^2 u(x))^-\right\|_{L^n(\{\Gamma_u^-=u\})}\,,
\end{equation}
\end{lem}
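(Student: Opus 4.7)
The plan is to follow the classical Alexandroff--Bakelman--Pucci argument as presented in the proof of Theorem 9.1 of \cite{GT}, with one modification: the uniformly elliptic pointwise bound $(\det D^2u)^{1/n}\le-\tfrac{1}{n}\text{Tr}(D^2u)$ must be replaced by a one-sided bound driven by the single coefficient $a_1$ (respectively $a_n$). The symmetry between (\ref{ABP-classic-sub}) and (\ref{ABP-classic-super}) is obtained via the substitution $u\mapsto -u$, which reverses both the order of the eigenvalues and the roles of the upper and lower convex envelopes, so it suffices to prove (\ref{ABP-classic-sub}) in detail.

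Set $M=\sup_\Omega u^+$; we may assume $M>0$, otherwise the inequality is trivial. Let $\Gamma^+_u$ denote the upper concave envelope of $u$ and $C^+=\{\Gamma^+_u=u\}$ the contact set. The standard normal-mapping lemma (Lemma 9.2 of \cite{GT}) shows that $Du$ sends $C^+$ onto a set containing the ball of radius $M/d$, so that the area formula gives
\begin{equation*}
\omega_n\Bigl(\frac{M}{d}\Bigr)^{n}\le\int_{C^+}|\det D^2u(x)|\,dx.
\end{equation*}
At any $x_0\in C^+$ a Taylor expansion, comparing $u$ with the affine majorant supplied by the concave envelope, forces $D^2u(x_0)\le 0$; ordering the eigenvalues as $\lambda_1\le\cdots\le\lambda_n\le 0$, two elementary observations are available. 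First, since the $-\lambda_i$ are non-negative, their geometric mean is dominated by the maximum, $\bigl(\prod_{i=1}^{n}(-\lambda_i)\bigr)^{1/n}\le -\lambda_1$. Second, because every $a_i\ge 0$ and every $-\lambda_i\ge 0$,
\begin{equation*}
\bigl(\mathcal M_{\bf a}(D^2u(x_0))\bigr)^{-}=\sum_{i=1}^{n}a_i(-\lambda_i)\ge a_1(-\lambda_1).
\end{equation*}
Combining the two and using $a_1>0$, which is exactly the hypothesis $\mathcal M_{\bf a}\in\mathcal A_1$, gives the pointwise bound
\begin{equation*}
|\det D^2u(x_0)|^{1/n}\le\frac{1}{a_1}\bigl(\mathcal M_{\bf a}(D^2u(x_0))\bigr)^{-}.
\end{equation*}

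Raising to the $n$-th power, integrating over $C^+$, and extracting the $n$-th root delivers (\ref{ABP-classic-sub}). For (\ref{ABP-classic-super}) I apply the same argument to $v=-u$: the contact set of the lower envelope of $u$ coincides with that of the upper envelope of $v$, and $\lambda_i(D^2v)=-\lambda_{n+1-i}(D^2u)$, so the role of $a_1$ is now taken by $a_n$, whose positivity is the hypothesis $\mathcal M_{\bf a}\in\mathcal A_n$. The only conceptually interesting point, and the one I expect to be the main obstacle, is the pointwise step: the AM--GM inequality $(\prod|\lambda_i|)^{1/n}\le\tfrac{1}{n}\sum|\lambda_i|$ is unusable here, because expressing $\tfrac{1}{n}\sum|\lambda_i|$ as a multiple of $\mathcal M_{\bf a}^{-}$ would require every $a_i>0$. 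Replacing AM--GM by the cruder but one-sided bound $(\prod|\lambda_i|)^{1/n}\le|\lambda_1|$ is what allows one to trade the geometric mean $(a_1\cdots a_n)^{1/n}$ of the classical estimate, which collapses whenever a single $a_i$ vanishes, for the quantity $a^{*}=\min(a_1,a_n)$, and this is the feature that makes the ABP estimate informative on the non-uniformly elliptic class $\mathcal A$.
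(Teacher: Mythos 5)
Your proof is correct and takes essentially the same route as the paper's: both establish the lower volume bound $\omega_n(M/d)^n\le\int_{C^+}|\det D^2u|$ via the normal mapping, then use $D^2u\le 0$ on the upper contact set to bound $|\det D^2u|^{1/n}\le-\lambda_1\le\frac{1}{a_1}\sum_i a_i(-\lambda_i)=\frac{1}{a_1}\bigl(\mathcal M_{\bf a}(D^2u)\bigr)^-$, and derive the supersolution estimate by the substitution $u\mapsto-u$ together with the duality $\mathcal M_{{\bf a}'}(-X)=-\mathcal M_{\bf a}(X)$.
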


\begin{proof}
Let us prove the first estimate (\ref{ABP-classic-sub}). We argue following the proof of Lemma 9.2 in \cite{GT} and of Lemma 3.4 in \cite{CC}, denoting by $\chi_u:\Omega \to \mathbb R^n$ the normal mapping 
\begin{equation}\label{normal-map}
\chi_u(z)=\{p\in \mathbb{R}^n:\ u(x)\leq u(y)+\langle p,x-z\rangle\ \,\forall\, x\in \Omega\}, \ \  z \in \Omega\,.
\end{equation}

We e remark that on the upper contact set $\{\Gamma^+_u=u\}$ the eigenvalues of $D^2u$ are non-positive, and
the Lebesgue measure of $\chi_u$  can be estimated as
\begin{equation}\label{normal-measure}
|\chi_u(\Omega)|\leq \int_{\Gamma^+_u=u}|\mbox{det}\, D^2 u(x)|\,dx.
\end{equation}

If $u \le 0$ in $\Omega$, then inequality (\ref{ABP-classic-sub}) is obvious.  Suppose then $u$ realizes a positive maximum at a point $y\in \Omega$, and recall that $\Omega \subset B_d(y)$. 

Let $\kappa$  be the function whose graph is the cone $K$ with vertex $(y,u(y))$ and base $\partial B_d(y)$, then $  \chi_\kappa(\Omega)\subset \chi_u(\Omega) $. Then  $\chi_u(\Omega) $ and contains all the slopes of $B_{u(y)/d}$, so that $\omega_n \,(u(y)/d)^n \le |\chi_u(\Omega)|$  and by (\ref{normal-measure})
\begin{equation}\label{maine}
u^+(y) \leq \frac{d}{\omega_n^{1/n}}\left(\int_{\Gamma^+_u=u}|\mbox{det} D^2 u(x)|\,dx\right)^{1/n}.
\end{equation}
Since on the contact set we have $|\lambda_n|\leq |\lambda_{n-1}|\leq \dots\leq |\lambda_1|$, so that
\begin{equation}\label{det}
\begin{split}
|\mbox{det} D^2 u| &=|\lambda_1(D^2u)|\cdots|\lambda_n(D^2u)| \leq |\lambda_1(D^2u)|^{n}\\
&= \frac1{a_1^n}\, \left|a_1\lambda_1(D^2u)\right|^{n}\leq \frac1{a_1^n}\, \left|\sum_{i=1}^n a_i\lambda_i(D^2u)\right|^{n}\\
&=\frac1{a_1^n}\, \left((\mathcal M_{{\bf a}}(D^2u))^-\right)^n.
\end{split}
\end{equation}
From (\ref{maine}) and (\ref{det}) we obtain  the estimate from above (\ref{ABP-classic-sub}).

\vskip0.2cm

For the estimate from below, we can apply  (\ref{ABP-classic-sub}) with $v=-u$ instead of $u$, observing that by assumption $v \le 0$ on $\partial \Omega$ and by duality 
\begin{equation}\label{dual-super}
\mathcal M_{{\bf a}'}(D^2v)) = -\mathcal M_{{\bf a}}(D^2u)).
\end{equation}

Then
\begin{equation}\label{ABP-classic-presuper}
\begin{split}
\sup_{\Omega}u^- &=\sup_\Omega v^+\\
&\le \frac{1}{a'_1}\frac{d}{\omega_n^{1/n}} \left\|\mathcal M_{{\bf a}'}(D^2 v(x))^+\right\|_{L^n(\{\Gamma_v^+=v\})}\\
&= \frac{1}{a_n}\frac{d}{\omega_n^{1/n}} \left\|\mathcal M_{{\bf a}}(D^2 u(x))^-\right\|_{L^n(\{\Gamma_u^-=u\})}\,.
\end{split}
\end{equation}

\end{proof}

Theorem \ref{ABP-two} is obtained combining the two unilateral ABP estimates, which hold separately for subsolutions and supersolutions, contained in the following result.

\begin{thm}\label{ABP}
Let $\Omega$ be a bounded domain of diameter $d$. Let  $f$ be continuous and bounded in $\Omega$. There exist an universal constant $C_n>0$, depending only on $n$ 

(i) for viscosity subsolutions $u\in$\,usc$(\Omega)$   of the equation $\mathcal M_{\bf a}[u]=f$   in $\Omega$ with $\mathcal M_{\bf a} \in \mathcal A_1$ 
\begin{equation}\label{ABP-est-sub}
\sup_{\Omega}u^+\leq \sup_{\partial\Omega}u^+ + \frac{C_n}{a_1}\,d\,\|f\|_{L^n(\Omega)};
\end{equation}

(ii)  for viscosity  supersolutions $u\in$\,lsc$(\Omega)$   of the equation $\mathcal M_{\bf a}[u]=f$  in $\Omega$ with $\mathcal M_{\bf a} \in \mathcal A_n$ and
\begin{equation}\label{ABP-est-super}
\sup_{\Omega}u^-\leq \sup_{\partial\Omega}u^- + \frac{C_n}{a_n}\,d\,\|f\|_{L^n(\Omega)}.
\end{equation}
\end{thm}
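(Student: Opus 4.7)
The plan is to extend the classical ABP estimates of Lemma \ref{ABP-classic} to viscosity solutions via the standard concave/convex envelope technique \`a la Caffarelli--Cabr\'e \cite{CC}, and to obtain (ii) from (i) by the duality of Subsection \ref{dua5}. For (i), after replacing $u$ by $u - \sup_{\partial\Omega}u^+$ I may assume $u\le 0$ on $\partial\Omega$; let $y\in\Omega$ realize $\sup u^+$ (if $\sup u^+=0$ the claim is trivial), choose a ball $B=B_d(y)\supset\Omega$, extend $u^+$ by zero on $B\setminus\Omega$, and let $\Gamma$ denote the concave envelope of this extension on $B$. By standard arguments \cite{CC}, $\Gamma\in C^{1,1}_{\rm loc}(B)$, so by Alexandrov's theorem $D^2\Gamma$ exists almost everywhere.

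The crucial step is that at almost every contact point $x\in\{\Gamma=u^+\}$ where $u(x)>0$, the pair $(D\Gamma(x),D^2\Gamma(x))$ belongs to the superjet $J^{2,+}u(x)$ (possibly after adding $\varepsilon I$ to the Hessian and then letting $\varepsilon\to 0^+$ via the continuity of $\mathcal M_{\bf a}$). Since $u$ is a viscosity subsolution, this gives $\mathcal M_{\bf a}(D^2\Gamma(x))\ge f(x)$. Because $\Gamma$ is concave, $D^2\Gamma(x)\le 0$, so the eigenvalues satisfy $|\lambda_n|\le\dots\le|\lambda_1|$, and the same algebraic computation carried out in the proof of Lemma \ref{ABP-classic} yields
\[
|\det D^2\Gamma(x)|\le \frac{1}{a_1^n}\bigl|\mathcal M_{\bf a}(D^2\Gamma(x))\bigr|^n\le \frac{1}{a_1^n}\bigl(f^-(x)\bigr)^n,
\]
where in the last step one uses that $f(x)\le \mathcal M_{\bf a}(D^2\Gamma(x))\le 0$ at such contact points. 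The Alexandrov--Bakel'man normal-mapping inequality, applied to the Lipschitz gradient map $D\Gamma$, then delivers
\[
\omega_n\left(\frac{\sup_\Omega u^+}{d}\right)^n\le \int_{\{\Gamma=u^+\}}\!|\det D^2\Gamma(x)|\,dx\le \frac{1}{a_1^n}\|f\|_{L^n(\Omega)}^n,
\]
from which (\ref{ABP-est-sub}) follows with $C_n=\omega_n^{-1/n}$, after restoring the boundary correction $\sup_{\partial\Omega}u^+$.

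Part (ii) is then immediate by duality: if $u$ is a viscosity supersolution of $\mathcal M_{\bf a}[u]=f$ with $\mathcal M_{\bf a}\in\mathcal A_n$, then $v:=-u$ is a viscosity subsolution of $\mathcal M_{{\bf a}'}[v]=-f$, where ${\bf a}'=(a_n,\dots,a_1)$ so that $\mathcal M_{{\bf a}'}\in\mathcal A_1$ with $a'_1=a_n$. Applying (i) to $v$ and using $v^+=u^-$ yields (\ref{ABP-est-super}).

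The main technical obstacle lies in the envelope step: establishing the $C^{1,1}$ regularity of $\Gamma$ and, more delicately, transferring the viscosity subsolution inequality to almost every Alexandrov contact point so that the determinantal bound of Lemma \ref{ABP-classic} can be invoked. These are by now classical steps, but they crucially rely on the continuity of $\mathcal M_{\bf a}$ to close the argument by passing to the limit in the perturbed jets if only $C^{1,1}$ regularity of $\Gamma$ is available; ellipticity of $\mathcal M_{\bf a}$ is never used beyond the sign information $a_i\ge 0$ and $a_1>0$, which is exactly what allows the bound $|\det D^2\Gamma|\le a_1^{-n}|\mathcal M_{\bf a}(D^2\Gamma)|^n$ to hold on the contact set.
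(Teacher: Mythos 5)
Your argument is close in spirit to the paper's, but there is a genuine gap precisely at the step you wave through with ``by standard arguments \cite{CC}, $\Gamma\in C^{1,1}_{\rm loc}(B)$''. For the concave envelope of a merely upper semicontinuous function this is false in general; it holds here only because $u$ is a viscosity subsolution of a (degenerate) elliptic equation, and establishing it is exactly the content of the paper's Lemma~\ref{C1,1} together with \cite[Lemma 3.5]{CC}. Lemma~\ref{C1,1} is not a citation from the literature: it is a barrier/comparison argument in a cylindrical box that relies on the structural inequality $\mathcal M_{\bf a}(X)\ge \hat a_n\lambda_1(X)+a_n\lambda_n(X)$ (and its dual version for subsolutions), the comparison principle of Theorem~\ref{comparison}, and the hypothesis $\mathcal M_{\bf a}\in\mathcal A_1$ (resp.\ $\mathcal A_n$) to keep the constant $C_\varepsilon$ finite. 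Your jet-based argument for the pointwise determinant bound $|\det D^2\Gamma(x)|\le a_1^{-n}(f^-(x))^n$ at a.e.\ contact point is valid, and is a clean alternative to invoking (\ref{lambda1}); but it yields this bound only at Alexandrov a.e.\ differentiability points. That is strictly weaker than the pointwise touching-paraboloid estimate at \emph{every} contact point, which is what $C^{1,1}$ regularity requires. Without $C^{1,1}$ regularity of $\Gamma$ the normal-mapping (Monge--Amp\`ere) measure of the concave envelope can have a singular part on the contact set, and then the inequality
\[
|\chi_\Gamma(B)|\le \int_{\{\Gamma=u^+\}}|\det D^2\Gamma(x)|\,dx
\]
is not justified by Alexandrov differentiability alone, so the cone-comparison step cannot be closed.

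In short, you have implicitly assumed the key regularity fact that the paper's proof is designed to establish. The remaining ingredients of your proposal --- the sub/super-jet inclusion at Alexandrov points, the ordering $|\lambda_n|\le\cdots\le|\lambda_1|$ on the (concave) contact set, the cone comparison, and the duality $\mathcal M_{{\bf a}'}(D^2(-u))=-\mathcal M_{\bf a}(D^2u)$ to deduce (ii) from (i) --- are all correct and run parallel to the paper's argument. To repair the proposal you should state and prove the touching-paraboloid estimate (Lemma~\ref{C1,1} and its dual) and cite \cite[Lemma 3.5]{CC} for the deduction of $C^{1,1}$ regularity; after that your jet computation replaces (\ref{lambda1}) and the rest goes through unchanged.
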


For classical solutions the proof follows directly from Lemma \ref{ABP-classic}.
\vskip0.2cm

{\bf Proof of Theorem \ref{ABP}: classical solutions.}  For subsolutions, supposing $\mathcal M_{\bf a}(D^2u) \ge f \ge -f^-$, we have $\mathcal M_{\bf a}(D^2u)^- \le f^-$. From Lemma \ref{ABP-classic}, passing to $u-\sup_{\partial \Omega}u$ in (\ref{ABP-classic-sub}), we get inequality (\ref{ABP-est-sub}). For supersolutions, supposing $\mathcal M_{\bf a}(D^2u) \le f \le -f^+$, we have $\mathcal M_{\bf a}(D^2u)^+ \le f^+$ . From Lemma \ref{ABP-classic}, passing to  $u-\inf_{\partial \Omega}u$ in (\ref{ABP-classic-super}), we get inequality (\ref{ABP-est-super}).\qed

\vskip0.2cm

To consider viscosity subsolutions, we extend $u^+=\max(u,0)$ and $f^-$ to zero outside $\Omega$, keeping the respective notations, and observing that in the viscosity setting $\mathcal M_{\bf a}(D^2u^+)\ge -f^-$ in $\mathbb R^n$. For viscosity supersolutions we extend $u^-=\max(u,0)$ and $f^+$ to zero outside $\Omega$ so that $\mathcal M_{\bf a}(D^2u^-)\le f^+$ in $\mathbb R^n$.

\vskip0.2cm 
In what follows we will refer to $\Gamma^+_u$ and $\Gamma^-_u$ as to the upper and the lower convex envelope of $u^+$ and $-u^-$, respectively, relative to the ball $B_{2d}$ concentric with a ball $B_d$ of radius $d$ containing $\Omega$. 

\vskip0.2cm
The key tool is the following lemma, which allows to apply the classical ABP estimates obtained before to viscosity subsolutions and supersolutions and is the counterpart of Lemma 3.3 of \cite{CC}.

\begin{lem} \label{C1,1} Let $\mathcal M_{\bf a} \in \mathcal A$. Let $u\in$\,{\rm lsc}$(\overline B_\delta)$, where $ B_\delta=\{|x-x_0|<\delta\}$, such that
\begin{equation}\label{D2-above}
\mathcal M_{\bf a}(D^2u) \le f \ \ {\rm  in} \ B_\delta
\end{equation}
in the viscosity sense, and  $w$ be a convex function such that
\begin{equation}\label{touch-below}
w(x_0)=u(x_0), \ \ w(x) \le u(x) \ \ {\rm  in} \ B_\delta.
\end{equation}
For sufficiently small  $\varepsilon\in(0,\varepsilon_0)$ and any function $f$, bounded above, we have
\begin{equation}\label{C1,1-est}
\ell(x) \le w(x) \le \ell(x)+\frac12\,C_\varepsilon\left(\sup_{B_\delta}f^+\right)|x-x_0|^2 \ \ {\rm  in} \ B_{\varepsilon\delta},
\end{equation}
where $\ell(x)$ is the supporting hyperplane for $w$ at $x_0$. In particular, there exists a convex paraboloid of opening $C_\varepsilon/a_n$ touching the graph of $w$ from above.

Here $\varepsilon_0>0$ depends on (a positive lower bound of) $a_n$ and (an upper bound of)  $\hat a_n$ defined in Subsection \ref{Radial_solutions}; moreover $C_\varepsilon\to 1/a_n$ as $\varepsilon \to 0$. Therefore, when $u$ is second order differentiable and $f$ is continuous at $x=x_0$, we get
\begin{equation}\label{lambdan}
\lambda_n^+(D^2w(x_0)) \le\frac1{a_n}\, f^+(x_0).
\end{equation}
\end{lem}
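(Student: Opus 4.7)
The left inequality $\ell \le w$ is immediate: by construction $\ell$ is the affine supporting hyperplane of the convex function $w$ at $x_0$, so $w \ge \ell$ in $B_\delta$. I focus on the quadratic upper bound, which is the substantive part.

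First I would normalize by an affine change of variables so that $x_0 = 0$ and $\ell \equiv 0$; then $u \ge w \ge 0$ in $B_\delta$ with $u(0) = w(0) = 0$, and $\mathcal M_{\bf a}(D^2 u) \le f$ still holds in the viscosity sense (affine functions have zero Hessian). The driving heuristic is: if $w$ were twice differentiable at $0$, its second-order Taylor polynomial at $0$ would be a smooth test function below $u$ with matching value at $0$, so the viscosity supersolution condition would give $\mathcal M_{\bf a}(D^2 w(0)) \le f(0)$; convexity of $w$ forces $D^2 w(0) \ge 0$ and the weight structure then yields
\[
a_n \lambda_n(D^2 w(0)) \le \sum_{i=1}^n a_i \lambda_i(D^2 w(0)) = \mathcal M_{\bf a}(D^2 w(0)) \le f(0),
\]
which is exactly (\ref{lambdan}). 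The lemma is the quantitative $C^{1,1}$ upgrade of this to non-smooth $w$.

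To carry this out rigorously, I would construct a radial barrier in the annulus $B_\delta \setminus \overline{B}_{\varepsilon\delta}$. Setting $M := \sup_{B_\delta} f^+$, take
\[
\Phi(x) \;=\; \tfrac{1}{2}\,C_\varepsilon M\,|x|^2 \;+\; A\bigl(\psi(\delta) - \psi(|x|)\bigr),
\]
where $\psi$ is the radial profile of a solution of the homogeneous equation $\mathcal M_{\bf a}[\cdot] = 0$ selected from (\ref{radialsoln}) of Subsection \ref{Radial_solutions}: namely $\psi(r) = r^{-\gamma_n}$ with $\gamma_n = \hat a_n/a_n - 1 > 0$ when $\hat a_n > a_n$, and $\psi(r) = -\log r$ when $\hat a_n = a_n$. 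By the radial computation in Remark \ref{radial}, the $\psi$-term contributes zero to $\mathcal M_{\bf a}$, so $\mathcal M_{\bf a}(D^2 \Phi) = C_\varepsilon M |{\bf a}|$, and for $C_\varepsilon |{\bf a}| \ge 1$ the barrier $\Phi$ is a classical subsolution of $\mathcal M_{\bf a}[\Phi] \ge M \ge f$ in the annulus. Tuning $A$ so that $\Phi$ dominates $u$ on $\partial B_\delta$ (using the continuity of $u$ up to the boundary for a crude bound there) and comparing the convex $w$ (which is a viscosity subsolution of $\mathcal M_{\bf a}[w] \ge 0$ by convexity) against $\Phi$ via the comparison principle (Theorem \ref{comparison}) on the annulus, one obtains $w \le \Phi$ throughout. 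Convexity of $w$ with $w(0) = 0$ then propagates the estimate from $\partial B_{\varepsilon\delta}$ into $B_{\varepsilon\delta}$, yielding (\ref{C1,1-est}).

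The smallness threshold $\varepsilon_0 = \varepsilon_0(a_n, \hat a_n)$ arises because the radial correction $\psi$ has algebraic/logarithmic exponent determined by the ratio $\hat a_n/a_n$, and must be a lower-order perturbation of the quadratic term at scale $\varepsilon \delta$ for the effective opening $C_\varepsilon$ to be tuned toward $1/a_n$; this is the improvement over the crude value $1/|{\bf a}|$ produced by a pure quadratic barrier. The pointwise inequality (\ref{lambdan}) then follows by sending $\varepsilon \to 0$ in (\ref{C1,1-est}) and using second-order differentiability of $u$ at $x_0$ and continuity of $f$ at $x_0$. The main obstacle I anticipate is exactly this tuning: the two roles of $\psi$---absorbing the boundary mismatch on $\partial B_\delta$ and remaining negligible on $\partial B_{\varepsilon\delta}$---pull against each other, and only the specific algebraic structure of the radial solutions of $\mathcal M_{\bf a} = 0$ reconciles them below the threshold $\varepsilon_0(a_n, \hat a_n)$, producing both the sharp constant $1/a_n$ in the limit and the quantitative dependence of $\varepsilon_0$ on these two parameters.
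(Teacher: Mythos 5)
Your heuristic is correct and matches the paper's intent: for smooth convex $w$ one has $a_n\lambda_n(D^2w(0))\le\sum_i a_i\lambda_i(D^2w(0))=\mathcal M_{\bf a}(D^2w(0))\le f(0)$. The problem is the rigorous construction you propose to make this quantitative, which differs from the paper's argument and contains several gaps.

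The paper's proof does not use radial barriers at all. It subtracts the supporting plane, reduces the full inequality $\mathcal M_{\bf a}(D^2v)\le f^+$ to the two-term inequality $\hat a_n\lambda_1(D^2v)+a_n\lambda_n(D^2v)\le f^+$ (using $\lambda_1\le\lambda_i$), fixes $\rho<\varepsilon$ and a max point $x_\rho$ of $\varphi=w-\ell$ on $\partial B_\rho$, and then works \emph{by contradiction} in a slender cylindrical box $R$ around $x_\rho$ with the \emph{anisotropic} paraboloid $P(x)=\tfrac12(x_n+\varepsilon\rho)^2-\tfrac12\tfrac{(1+\varepsilon)^2}{1-\rho^2}\rho^2|x'|^2$, which has $\lambda_n(D^2P)=1$ and $\lambda_1(D^2P)=O(\varepsilon^2)$. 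The barrier is a \emph{subsolution} of the two-term operator, it is compared against the \emph{supersolution} $v=u-\ell$ (not against $w$), and the contradiction $Q(0)>0=v(0)$ caps $M_\rho$. The constant $1/a_n$ arises precisely because the paraboloid has unit curvature in the $x_n$-direction and vanishing curvature elsewhere, so $\hat a_n\lambda_1+a_n\lambda_n\to a_n$. The convexity of $\varphi$ is used only to propagate $\varphi\ge M_\rho$ along the top face of the cylinder.

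Your radial-annulus plan has three concrete gaps. First, the claim that ``the $\psi$-term contributes zero to $\mathcal M_{\bf a}$'' is false in general. $\mathcal M_{\bf a}$ on a radial profile $v(r)$ equals $a_n v''+\hat a_n v'/r$ when $v''\ge v'/r$ but $a_1v''+\hat a_1 v'/r$ when $v''\le v'/r$, so it is \emph{not} linear on radial functions unless $a_1\hat a_n=a_n\hat a_1$. Moreover, $\psi=r^{-\gamma_n}$ with $\gamma_n=\hat a_n/a_n-1$ satisfies $a_n\psi''+\hat a_n\psi'/r=0$ precisely because $\psi''\ge\psi'/r$; but your barrier contains $-A\psi$ (the term $A(\psi(\delta)-\psi(|x|))$), which flips the eigenvalue ordering, so the operator evaluated on $\Phi$ picks up the \emph{other} two-term combination and the $\psi$-piece does not cancel. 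Second, the comparison direction is backward: you compute $\Phi$ to be a \emph{subsolution} $\mathcal M_{\bf a}[\Phi]\ge M$, and $w$ is a subsolution $\mathcal M_{\bf a}[w]\ge 0$; the comparison principle cannot give $w\le\Phi$ from two subsolutions. To bound $w$ from above by comparison you would need a \emph{supersolution} barrier (or, as in the paper, argue by contradiction with a subsolution barrier compared against the supersolution $v$). Third, there is no ``crude bound for $u$ on $\partial B_\delta$'': $u$ is only lower semicontinuous on $\overline B_\delta$, hence bounded below but not above, and a convex $w$ on an open ball need not be bounded above near $\partial B_\delta$ either. The paper's cylinder argument is designed precisely to avoid needing any upper bound on $u$ or $\varphi$ near $\partial B_\delta$: it uses only the one-sided facts $\varphi\ge 0$ and $\varphi\ge M_\rho$ on one face, which follow from the supporting plane and convexity. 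You would need to rebuild the argument along these lines for the plan to go through.
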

\begin{proof} The first one inequality in (\ref{C1,1-est}) depends on the fact that $\ell(x)$ is the supporting hyperplane of $w$ at $x_0$.

Concerning the second one, we may proceed assuming $x_0=0$ and $\delta=1$. 

\vskip0.2cm

(i) Subtracting $\ell(x)$, we consider the functions  $v(x)=u(x)-\ell(x)$ and $\varphi(x)=w(x)-\ell(x)$, which satisfy in turn the assumptions on $u(x)$ and $w(x)$, respectively. This simplifies the computations, since $\varphi(0)=0$ and the supporting hyperplane for $v(x)$ at $x=0$ is now horizontal, so that $\varphi(x) \ge 0$ in $B_1$.

In this way, we are reduced to show
\begin{equation}\label{red-est}
 \varphi(x) \le \frac12\,C_\varepsilon K |x|^2 \ \ {\rm  in} \ B_{\varepsilon},
\end{equation}
with $K=\sup_{B_1}f^+$, under the assumptions 
\begin{equation}\label{red}
\varphi(0)=v(0)=0, \ \ \varphi(x) \le v(x) \ \ {\rm  in} \ B_1,
\end{equation}
and
\begin{equation}\label{red-D2}
a_1\lambda_1(D^2v)+\dots+a_n\lambda_n(D^2v) \le f^+(x) \ \ {\rm  in} \ B_1,
\end{equation}
which implies
\begin{equation}\label{redd-D2}
\hat a_n\lambda_1(D^2v)+a_n\lambda_n(D^2v) \le f^+(x) \ \ {\rm  in} \ B_1.
\end{equation}

(ii) Let $0 <\rho < \varepsilon$ and $M_\rho$ be the maximum of $\varphi$ on $\overline B_\rho$. We may suppose that a maximum point  is $x_\rho=(0,\dots,0,\rho) \in \partial B_\rho$.  Since the supporting hyperplane for $\varphi(x)$ at $x_\rho$ is constant on the tangent line to $B_\rho$ through $x_\rho$, we have 
\begin{equation}\label{side-rho}
\varphi(x) \ge M_\rho  \ \ {\rm for} \ x=(x', \rho),
\end{equation}
 where $x'=(x_1,\dots,x_{n-1})$.\\
Let us consider now the cylindrical box 
$$R = \{x=(x',x_n) : |x'| < \sqrt{1-\rho^2},\ -\varepsilon \rho <x_n < \rho\} \subset B_1,$$ 
and the paraboloid 
$$P(x)= \frac12(x_n+\varepsilon\rho)^2-\frac12\,\frac{(1+\varepsilon)^2}{1-\rho^2}\,\rho^2|x'|^2.$$
Evaluating $P(x)$ on $\partial R$, when $x_n=-\varepsilon\rho$ or $|x'|=\sqrt{1-\rho^2}$, we have $P(x) \le 0$. On the remaining part of $\partial R$, $x_n=\rho$, we have $P(x) \le \frac12\,{(1+\varepsilon)^2}\rho^2$, from which 
\begin{equation}\label{P-above}
\frac{M_\rho}{\frac12\,{(1+\varepsilon)^2}\,\rho^2}\,P(x) \le \varphi(x) \ \ {\rm  on} \ \partial R.
\end{equation}

(iii) Since $\rho<\varepsilon$, then $P(x)$ is solution of the differential inequality
\begin{equation}\label{P}
\begin{split}
\hat a_n\lambda_1(D^2P)+ a_n\lambda_n(D^2P)& \ge a_n-\frac{(1+\varepsilon)^2}{1-\rho^2}\,\hat a_n\rho^2\\
&\ge a_n-\frac{1+\varepsilon}{1-\varepsilon}\,\hat a_n\varepsilon^2\\
& \equiv a_n- \hat a_nc_\varepsilon,
\end{split}
\end{equation}
where $c_\varepsilon \to 0$ as $\varepsilon \to 0^+$, so that  $a_n-\hat a_nc_\varepsilon>0$ for $\varepsilon<\varepsilon_0$ small enough, and the function $\displaystyle  Q(x)\equiv\frac{K}{a_n-\hat a_nc_\varepsilon}\,P(x)$
satisfies the differential inequality
\begin{equation}\label{Q-subsoln}
\hat a_n\lambda_1\left(D^2Q\right)+ a_n\lambda_n\left(D^2Q\right) \ge K \ge f^+ \ \ \hbox{\rm in } \ B_1.
\end{equation}

(iv) We claim that 
\begin{equation}\label{Mrho}
 M_\rho = \max_{\overline B_\rho}\varphi(x) \le \frac12 \frac{(1+\varepsilon)^2}{a_n- \hat a_nc_\varepsilon}\,K \rho^2
\end{equation}
In fact, arguing by contradiction, suppose that  $M_\rho > \frac12 \frac{(1+\varepsilon)^2}{a_n-\hat a_nc_\varepsilon}\,K \rho^2$. Then using (\ref{P-above}) and (\ref{red}),
$$Q(x)=\frac{K}{a_n-\hat a_nc_\varepsilon}\,P(x) < \frac{M_\rho}{\frac12 \,{(1+\varepsilon)^2}\rho^2}\,P(x) \le \varphi(x) \le v(x) \ \ {\rm  on} \ \partial R.$$
By (\ref{red-D2}) and (\ref{Q-subsoln}), the comparison principle would imply $Q(x) \le v(x)$ in $R$, and this is a contradiction with $v(0)=0 < Q(0)$, which proves the claim.\\

 Setting $\rho=|x|$ in (\ref{Mrho}), as in the proof of \cite[Theorem 3.2]{CC}, we conclude that the statement of the theorem holds with $C_\varepsilon= \frac{(1+\varepsilon)^2}{a_n-\hat a_nc_\varepsilon}$, where $c_\varepsilon \to 0$ as $\varepsilon \to 0^+$.
\end{proof}

\vskip0.2cm

{\bf Proof of Theorem \ref{ABP}: viscosity solutions.}  We follow the lines of the proof of \cite[Theorem 3.6]{CC}, considering subsolutions $u\in$\,usc$(\overline\Omega)$. The case of supersolutions $u\in$\,lsc$(\overline\Omega)$ with the estimate (\ref{ABP-est-super}) from below can be obtained by duality, passing to $-u$.

\vskip0.2cm

Let $\mathcal M_{\bf a} \in \mathcal A$. From Lemma \ref{C1,1} and duality we deduce a similar conclusion for subsolutions $u\in$\,{\rm lsc}$(\overline B_\delta)$, where $B_\delta=\{|x-x_0|<\delta\}$, such that
\begin{equation}\label{D2-below}
\mathcal M_{\bf a}(D^2u) \ge f(x) \ \ {\rm  in} \ B_\delta
\end{equation}
in the viscosity sense. Let $w$ be a concave function such that
\begin{equation}\label{touch-above}
w(x_0)=u(x_0), \ \ w(x) \le u(x) \ \ {\rm  in} \ B_\delta.
\end{equation}
For sufficiently small  $\varepsilon\in(0,\varepsilon_0)$ and any function $f$, bounded above, we have
\begin{equation}\label{C1,1-est}
\ell(x) -\frac12\,C_\varepsilon\left(\sup_{B_\delta}f^+\right)|x-x_0|^2\le w(x) \le \ell(x) \ \ {\rm  in} \ B_{\varepsilon\delta},
\end{equation}
where $\ell(x)$ is the supporting hyperplane for $w$ at $x_0$. In particular, there exists a concave paraboloid of opening $C_\varepsilon$ touching the graph of $w$ from below.

Here $\varepsilon_0>0$ depends on a lower bound for $a_1$ and an upper bound for  $ \hat a_1$ defined in Subsection \ref{Radial_solutions}, and $C_\varepsilon\to 1/a_1$ as $\varepsilon \to 0$. Therefore, when $u$ is second order differentiable and $f$ is continuous at $x=x_0$, we get
\begin{equation}\label{lambda1}
\lambda_1^-(D^2w(x_0)) \le\frac1{a_1}\, f^-(x_0).
\end{equation}

Using  \cite[Lemma 3.5]{CC}, we deduce from the above that $\Gamma_u \in C^{1,1}(\overline B_d)$. Hence $\Gamma_u $ is second order differentiable a.e. in $\overline B_d$ and (\ref{maine}) holds for $\Gamma_u^+$ in $B_d$.

 If $u \le 0$ on $\partial \Omega$ then we have:
\begin{equation}\label{maine-sub}
\sup_{B_d}u^+ \leq C_nd\left(\int_{\Gamma^+_u=u}|\mbox{det} D^2\Gamma_u^+ (x)|\,dx\right)^{1/n}.
\end{equation}
Reasoning as in the proof of \cite[Theorem 3.6]{CC}, that is observing that the upper contact points are in $\Omega$ and  $\Gamma_u^+$ is  second order differentiable a.e. on $\{\Gamma^+_u=u\}$, where $f$ is continuous and therefore, by (\ref{lambda1}):
\begin{equation}\label{det-lambda1}
|\det D^2(\Gamma^+_u(x))| \le  \left(\lambda_1^-(D^2\Gamma^+_u(x))\right)^n \le \frac1{a_1^n}\, (f^-(x))^n.
\end{equation}
 Estimating (\ref{maine-sub}) with (\ref{det-lambda1}), we get the ABP estimate (\ref{ABP-est-sub}) for $u \le0$ on $\partial\Omega$.
 Passing to $u-\sup_{\partial\Omega}u$, which is $\le 0$ on $\partial\Omega$, we conclude that (\ref{ABP-est-sub}) holds.\qed


 \section{Harnack inequality and $C^\alpha$ estimates}\label{harnack}

The Harnack inequality, classically related to the mean properties of the Laplace operator, is a powerful nonlinear technique for regularity in the framework of fully nonlinear equations. We refer to \cite{GT} for solutions of linear uniformly elliptic equations in Sobolev spaces, to \cite{TRU1} for quasi-linear uniformly elliptic equations and to \cite{CAF,CC} for viscosity solutions of fully nonlinear equations.

\vskip0.2cm

In order to prove the Harnack inequality for non-negative solutions and the related local estimates for subsolutions and non-negative supersolutions, respectively known in literature (see for instance \cite{GT}) as the local maximum principle and  the weak Harnack inequality, we could employ the same strategy of \cite[Chapter 4]{CC}. 

A quicker way, sufficient for the applications, is based on the inequalities (\ref{Ma-Pucci}) obtained in Section \ref{aux}. 

The results are given in cubes, and here $\mathcal Q_{\ell}$ is a cube of $\mathbb R^n$ of edge $\ell$ centered at the origin, i.e. $Q_\ell=\{|x_i|<\ell/2, i=1,\dots,n\}$, but they could be equivalently stated in balls.

\begin{thm}\label{lmp} {\rm (local maximum principle)} Let $\mathcal M_{\bf a}\in {\mathcal A}_1$. Let $u$ be a viscosity subsolution of the equation $\mathcal M_{\bf a}(D^2u)=f$ in $Q_1$, where $f$ is continuous and bounded. Then
\begin{equation}\label{lmp-ineq}
\sup_{Q_{1/2}} u \le C_p\left( \|u^+\|_{L^{p}(Q_{2/3})}+ \|f^-\|_{L^n(Q_1)}\right),
\end{equation}
where $C_p$ is a constant depending only on $n$, $p$, $a_1$ and $\tilde a$.
\end{thm}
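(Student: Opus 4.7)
The plan is to reduce the statement to the already-known local maximum principle for viscosity subsolutions of uniformly elliptic Pucci extremal equations, exploiting the extremality bound established in Section \ref{aux}.

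First, I recall inequality \eqref{Ma-Pucci+asy} derived in the proof of Theorem \ref{exist-uniq}: for every $\mathcal M_{\bf a}\in\mathcal A_1$ and every $X\in\mathcal S^n$,
\begin{equation*}
\mathcal M_{\bf a}(X)\le \mathcal M^+_{\frac{a_1}{n},|{\bf a}|}(X),
\end{equation*}
where $\mathcal M^+_{\lambda,\Lambda}$ is the maximal Pucci operator with ellipticity constants $\lambda=a_1/n$ and $\Lambda=|{\bf a}|=n\tilde a$. Hence, if $u$ is a viscosity subsolution of $\mathcal M_{\bf a}(D^2u)=f$ in $Q_1$, so that $\mathcal M_{\bf a}(D^2u)\ge f$ in the viscosity sense, then for every $(\xi,X)\in J^{2,+}_{Q_1}u(x)$ we have
\begin{equation*}
\mathcal M^+_{\frac{a_1}{n},|{\bf a}|}(X)\ge \mathcal M_{\bf a}(X)\ge f(x),
\end{equation*}
so that $u$ is automatically a viscosity subsolution of the uniformly elliptic extremal equation
\begin{equation*}
\mathcal M^+_{\frac{a_1}{n},|{\bf a}|}(D^2 u)=f \quad\text{in }Q_1.
\end{equation*}

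Next, I apply the classical local maximum principle for viscosity subsolutions of uniformly elliptic equations dominated by a Pucci maximal operator, as proved in \cite[Theorem 4.8(1)]{CC} (with the usual passage from balls to cubes via inclusion $Q_{1/2}\subset B_{\sqrt n/2}$ and $B_{\sqrt n/3}\subset Q_{2/3}$, which only affects the universal constant). This yields, for every $p>0$,
\begin{equation*}
\sup_{Q_{1/2}}u\le C_p\bigl(\|u^+\|_{L^p(Q_{2/3})}+\|f^-\|_{L^n(Q_1)}\bigr),
\end{equation*}
where the constant $C_p$ depends only on $n$, $p$, and on the ellipticity constants of the governing Pucci operator, namely $\lambda=a_1/n$ and $\Lambda=n\tilde a$. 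Translating back, $C_p$ depends only on $n$, $p$, $a_1$ and $\tilde a$, which is exactly the dependence claimed in the statement.

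There is no real obstacle here once inequality \eqref{Ma-Pucci+asy} is in hand: the substantive analytic work (the ABP-barrier construction, the Calderón--Zygmund cube decomposition, and the iteration scheme) is entirely inherited from the uniformly elliptic theory. The only point to be careful about is that the hypothesis $\mathcal M_{\bf a}\in\mathcal A_1$ (that is, $a_1>0$) is essential in order to guarantee that the majorizing Pucci operator is genuinely uniformly elliptic, since its lower ellipticity constant is $a_1/n$; without it, the reduction collapses and the $L^p$ bound on $u^+$ can in fact fail, consistently with the remark in the introduction that the corresponding Harnack-type inequalities do not extend to the whole class $\overline{\mathcal A}$.
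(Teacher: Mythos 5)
Your proof is correct and uses essentially the same reduction as the paper: dominate $\mathcal M_{\bf a}$ from above by the Pucci maximal operator $\mathcal M^+_{a_1/n,|{\bf a}|}$ (uniformly elliptic precisely because $a_1>0$, with $|{\bf a}|=n\tilde a$) and invoke the Caffarelli--Cabr\'e local maximum principle. The only slip is a citation detail: the local maximum principle for subsolutions is Theorem 4.8(2) of \cite{CC}, not 4.8(1) (that part is the weak Harnack inequality, used for Theorem \ref{w-H}), and since Theorem 4.8 there is already formulated in cubes, the ball/cube passage you sketch is unnecessary (and the inclusions as written are off by $\sqrt n$ factors anyway).
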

\begin{proof} In view of inequalities  (\ref{Ma-Pucci}), we have $\mathcal M_{{a_1}/n,\tilde a n}^+(D^2u) \ge \mathcal M_{\bf a}(D^2u) \ge f(x) \ge -f^-(x)$, and therefore we can apply Theorem 4.8 (2) of \cite{CC} to obtain (\ref{lmp-ineq}).
\end{proof}

\begin{thm}\label{w-H} {\rm (weak Harnack inequality)}  Let $\mathcal M_{\bf a}\in \mathcal A_n$. Let $u \ge 0$ be a viscosity supersolution of the equation $\mathcal M_{\bf a}(D^2u)=f$ in $Q_{1}$, where $f$ is continuous and bounded. Then
\begin{equation}\label{w-H-ineq}
\|u\|_{L^{p_0}(Q_{2/3})}\le C_0\left(\inf_{Q_{3/4}} u + \|f^+\|_{L^n(Q_{1})}\right),
\end{equation}
where $p_0>0$ and $C_0$ are universal constants, depending only on $n$, $p$, $a_n$ and  $\tilde a$.
\end{thm}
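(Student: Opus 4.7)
The plan is to mirror exactly the strategy used in the proof of Theorem \ref{lmp}: reduce the weighted partial trace inequality to a Pucci extremal inequality via the bounds from Section \ref{aux}, and then invoke the classical weak Harnack inequality for nonnegative viscosity supersolutions of uniformly elliptic Pucci equations.

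More precisely, since $\mathcal M_{\bf a}\in \mathcal A_n$ (so $a_n>0$), the inequality (\ref{Ma-Pucci-asy}) established in Section \ref{aux} gives
\[
\mathcal M^-_{a_n/n,\,|{\bf a}|}(X)\;\le\;\mathcal M_{\underline{\bf a}}(X)\;\le\;\mathcal M_{\bf a}(X)\qquad\forall\,X\in\mathcal S^n.
\]
As $u\in$\,lsc$(Q_1)$ is a viscosity supersolution of $\mathcal M_{\bf a}(D^2u)=f$, for every $(\xi,X)\in J^{2,-}_{Q_1}u(x)$ we have $\mathcal M_{\bf a}(X)\le f(x)$, and the above pointwise inequality then forces $\mathcal M^-_{a_n/n,|{\bf a}|}(X)\le f(x)\le f^+(x)$. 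Thus $u\ge0$ is a viscosity supersolution of the uniformly elliptic Pucci equation
\[
\mathcal M^-_{\lambda,\Lambda}(D^2u)=f^+(x)\quad\text{in }Q_1,\qquad \lambda=\tfrac{a_n}{n},\;\;\Lambda=|{\bf a}|=n\tilde a.
\]

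Next I would apply the weak Harnack inequality for nonnegative viscosity supersolutions of the minimal Pucci operator, which is precisely \cite[Theorem 4.8(1)]{CC} (this is the partner statement to the local maximum principle used in Theorem \ref{lmp}). That result yields constants $p_0>0$ and $C_0>0$, depending only on the dimension $n$ and the ellipticity constants $\lambda,\Lambda$, such that
\[
\|u\|_{L^{p_0}(Q_{2/3})}\;\le\;C_0\Bigl(\inf_{Q_{3/4}}u+\|f^+\|_{L^n(Q_1)}\Bigr).
\]
Since $\lambda=a_n/n$ and $\Lambda=n\tilde a$, the resulting constants depend only on $n$, $a_n$ and $\tilde a$, as claimed.

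There is no real obstacle here: the work was already done in Section \ref{aux} when the Pucci sandwich (\ref{Ma-Pucci-asy}) was proved, and the only care needed is to make sure that the inequality goes in the correct direction (the lower Pucci bound for $\mathcal M_{\bf a}$ is what transfers a supersolution of $\mathcal M_{\bf a}$ into a supersolution of $\mathcal M^-_{\lambda,\Lambda}$) and to replace $f$ by $f^+$ so that the right-hand side matches the form required by \cite[Theorem 4.8(1)]{CC}. One could alternatively reprove the weak Harnack inequality directly by adapting the Calder\'on--Zygmund/barrier machinery of \cite[Chapter 4]{CC} to $\mathcal M_{\bf a}$, but, as pointed out by the authors before Theorem \ref{lmp}, the present shortcut through Pucci extremal operators is entirely sufficient and keeps the dependence of the constants transparent.
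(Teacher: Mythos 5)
Your proof is correct and is essentially the same as the paper's: both reduce to the minimal Pucci operator via the lower bound $\mathcal M^-_{a_n/n,|{\bf a}|}(X)\le\mathcal M_{\bf a}(X)$ and then invoke the weak Harnack inequality of \cite[Theorem 4.8(1)]{CC}. In fact you are slightly more careful than the paper in citing (\ref{Ma-Pucci-asy}) rather than (\ref{Ma-Pucci}), which is the version that only requires $\mathcal M_{\bf a}\in\mathcal A_n$ rather than $\mathcal M_{\bf a}\in\mathcal A$.
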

\begin{proof} In view of inequalities  (\ref{Ma-Pucci}), we have $\mathcal M_{{a_n}/n,\tilde a n}^-(D^2u) \le \mathcal M_{\bf a}(D^2u) \le f(x) \le f^+(x)$, and therefore we can apply Theorem 4.8 (1) of \cite{CC} to obtain (\ref{w-H-ineq}).
\end{proof}

The proof of Theorem \ref{Harnack}  (Harnack inequality) follows at once.

\vskip0.2cm

\noindent {\bf Proof of Theorem \ref{Harnack}}. Let $p_0>0$ be the exponent of Theorem \ref{w-H}. From (\ref{w-H-ineq}) and (\ref{lmp-ineq}) it follows that
\begin{align*} \sup_{Q_{1/2}} u &\le  C_{p_0}\left( \|u\|_{L^{p_0}(Q_{2/3})}+ \|f^-\|_{L^n(Q_1)}\right) \\
&\le C_{p_0}\left( C_0\left(\inf_{Q_{3/4}} u + \|f^+\|_{L^n(Q_1)}\right)+ \|f^-\|_{L^n(Q_1)}\right),
\end{align*}
which yields the result.\qed

\vskip0.2cm

From the Harnack inequality, in a standard way, using the technique for the proof of  \cite[Proposition 4.10]{CC} and  \cite[Lemma 8.23]{GT}, the following H\"older regularity results and $C^\alpha$ interior estimates can be obtained. We give the result with concentric balls $B_1$ and $B_{1/2}$ of radius $1$ and $1/2$, respectively.

\begin{thm} \label{Holder} {\rm (interior H\"older continuity)}  Let $\mathcal M_{\bf a}\in \mathcal A$.  Let $u$ be a viscosity solution of the equation $\mathcal M_{\bf a}(D^2u) =f$ in $B_1$, where $f$ is continuous and bounded. Then $u\in C^\alpha(\overline B_{1/2})$ and 
$$\|u\|_{C^\alpha(\overline B_{1/2})} \le C\left(\|u\|_{L^\infty(B_1)}+\|f\|_{L^n(B_{1})}\right),$$
where $C$ is a positive constant depending only on $n$, $a^*=\min(a_1,a_n)$ and $\tilde a = (a_1+\dots+a_n)/n$.
\end{thm}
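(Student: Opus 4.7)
The plan is to follow the Caffarelli--Cabr\'e scheme \cite[Proposition 4.10]{CC}: derive a geometric oscillation decay from Theorem \ref{Harnack} and iterate it on a dyadic sequence of balls. The structural facts about $\mathcal M_{\bf a}\in \mathcal A$ that make the argument go through are translation and rotational invariance, the positive $1$-homogeneity \eqref{pos-hom}, and the fact that the dual operator $\tilde{\mathcal M}_{\bf a}=\mathcal M_{{\bf a}'}$ belongs again to $\mathcal A$ with the same invariants $a^*$ and $\tilde a$ (see Subsection \ref{dua5}).

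The core step is the following oscillation decay: there exist universal $\mu\in(0,1)$ and $C>0$, depending only on $n$, $a^*$, $\tilde a$, such that for every $B_r(x_0)\subset B_1$
$$\operatorname{osc}_{B_{r/2}(x_0)} u \le \mu\,\operatorname{osc}_{B_r(x_0)} u + C\,r\,\|f\|_{L^n(B_r(x_0))}.$$
To prove it, set $M:=\sup_{B_r(x_0)} u$ and $m:=\inf_{B_r(x_0)} u$, and introduce the nonnegative functions $v_1:=u-m$ and $v_2:=M-u$. Since constants are in the kernel of $D^2$, $v_1$ is still a viscosity solution of $\mathcal M_{\bf a}(D^2 v_1)=f$, and by the duality of Subsection \ref{dua5} the function $v_2$ solves $\mathcal M_{{\bf a}'}(D^2 v_2)=-f$ with $\mathcal M_{{\bf a}'}\in\mathcal A$. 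Rescaling $\tilde v_i(y):=v_i(x_0+ry)$ and using positive $1$-homogeneity, each $\tilde v_i$ is a nonnegative viscosity solution on the unit cube of an equation whose operator lies in $\mathcal A$ and whose forcing has modulus $r^2|f(x_0+r\,\cdot)|$, with $L^n$-norm on $Q_1$ equal to $r\|f\|_{L^n(Q_r(x_0))}$. Applying Theorem \ref{Harnack} separately to $\tilde v_1$ and $\tilde v_2$ and adding the two resulting inequalities one obtains, after undoing the scaling,
$$(M-m)+\operatorname{osc}_{B_{r/2}(x_0)} u \le C\bigl((M-m)-\operatorname{osc}_{B_{r/2}(x_0)} u\bigr)+C\,r\,\|f\|_{L^n(B_r(x_0))},$$
which rearranges to the claimed contraction with $\mu=(C-1)/(C+1)\in(0,1)$.

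Iterating the decay on $B_{2^{-k}}(x_0)$ and summing the geometric series, whose convergence is guaranteed by the linear-in-$r$ factor in front of $\|f\|_{L^n}$, yields
$$\operatorname{osc}_{B_r(x_0)} u \le C\,r^{\alpha}\bigl(\|u\|_{L^\infty(B_1)}+\|f\|_{L^n(B_1)}\bigr)$$
for every $x_0\in B_{1/2}$ and $0<r\le 1/4$, with $\alpha=\min\{\log_2(1/\mu),1\}\in(0,1)$; combined with the trivial $L^\infty$-bound this gives the stated $C^\alpha$-estimate. The main delicate point, and the reason why the $L^n$-norm of $f$ is the right quantity on the right-hand side, is the scaling identity $\|r^2 f(x_0+r\,\cdot)\|_{L^n(Q_1)}=r\|f\|_{L^n(Q_r(x_0))}$: since $\mathcal M_{\bf a}$ is positively $1$-homogeneous rather than $2$-homogeneous, the Hessian contributes a factor $r^2$ and the change of variables in $L^n$ contributes $r^{-1}$, producing exactly the factor $r$ needed to close the iteration. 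This matches the scaling dependence already visible in the ABP estimate of Theorem \ref{ABP-two}.
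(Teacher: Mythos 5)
Your proposal is correct and follows exactly the route the paper intends: the paper derives Theorem \ref{Holder} from the Harnack inequality (Theorem \ref{Harnack}) by referring to the standard Caffarelli--Cabr\'e oscillation-decay argument (\cite[Proposition~4.10]{CC}), and you have simply written that argument out, including the key structural facts (duality keeps the operator in $\mathcal A$ with the same $a^*$, $\tilde a$; positive $1$-homogeneity gives the scaling identity $\|r^2 f(x_0+r\,\cdot)\|_{L^n(Q_1)}=r\|f\|_{L^n(Q_r(x_0))}$). One small imprecision: after rescaling $\tilde v_i(y)=v_i(x_0+ry)$, nonnegativity of $\tilde v_i$ is guaranteed only on the ball inscribed in $Q_1$, not on $Q_1$ itself, since $B_r(x_0)$ does not contain the cube of side $r$ about $x_0$ in dimension $n>4$; the fix is to rescale by $\ell=2r/\sqrt n$ so that $Q_\ell(x_0)\subset B_r(x_0)$, which only changes the universal constants and the decay ratio $\theta$, not the structure of the iteration.
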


Global H\"older estimates can be proved for domains with the uniform exterior sphere condition (S), see Section \ref{intro}, via the boundary H\"older estimates of the lemma below. We adopt the following notations, for the H\"older seminorm ($0<\gamma <1)$) of a function $h: D \to \mathbb R$ in a subset $D$ of $\mathbb R^n$:

\begin{equation}\label{betanorm}
 [h]_{_{\beta,D}} = \sup_{\substack{{x,y \in D}\\ {x\neq y}}}\frac{|h(x)-h(y)|}{|x-y|^\beta};\\ 
\end{equation}

  \begin{lem} \label{Holder-bdary-lem}  Let $\mathcal M_{\bf a}\in \overline{\mathcal A}$ and $u$ be a viscosity solution of the equation $\mathcal M_{\bf a}(D^2u) =f$ in a bounded domain $\Omega$, where $f$ is continuous and bounded. 

We assume that $\Omega$ satisfies a uniform exterior sphere condition (S) with  radius $R>0$, and  $u=g$ on $\partial \Omega$.
\vskip0.2cm
(i) If $g \in C^\beta(\partial \Omega)$ with $\beta \in (0,1]$, then
\begin{equation}\label{Holder-bdary1}
\sup_{\substack{x \in \Omega\\y\in\partial\Omega}}\frac{u(x)-u(y)}{|x-y|^{\beta/2}}\le  C\left([g]_{_{\beta,\partial\Omega}}+\|f^-\|_{L^\infty(\Omega)}\right) 
\end{equation}
with $C>0$  depending only on $n$, $\tilde a$, $R$ and $\beta$.

\vskip0.2cm
(ii) Assume in addition that $\Omega$ has a uniform Lipschitz boundary with Lipschitz constant $L$. If $g \in C^{1,\beta}(\partial \Omega)$ with $\beta \in [0,1)$, then
\begin{equation}\label{Holder-bdary2}
\sup_{\substack{x \in \Omega\\y\in\partial\Omega}}\frac{u(x)-u(y)}{|x-y|^{(1+\beta)/2}}\le  C\left([g]_{_{1,\partial\Omega}}+[Dg]_{_{\beta,\partial\Omega}}+\|f^-\|_{L^\infty(\Omega)}\right) 
\end{equation}
with $C>0$  depending only on $n$, $\tilde a$, $R$, L and $\beta$.

\vskip0.2cm
If $u$ is a viscosity supersolution, (i) and (ii) holds with $u(y)-u(x)$ and $\|f^+\|_{L^\infty(\Omega)}$ instead of  $u(x)-u(y)$ and $\|f^-\|_{L^\infty(\Omega)}$, respectively.

\end{lem}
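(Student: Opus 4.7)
The plan is a barrier argument at each boundary point $y\in\partial\Omega$, followed by the comparison principle of Theorem \ref{comparison}. Fix $y$, use (S) to pick the tangent exterior ball $B_R(z)$, and set $\rho(x):=|x-z|\ge R$ on $\overline{\Omega}$, $d(x):=\rho(x)-R\ge 0$, $d(y)=0$. Two elementary geometric facts drive the argument: (a) $d(x)\le|x-y|$ globally on $\overline\Omega$, and (b) for $x\in\partial\Omega$ close to $y$, $d(x)\ge c_R\,|x-y|^2$ (the quadratic tangency responsible for the loss from exponent $\beta$ to $\beta/2$).

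For (i) I would try the radial ansatz
\[
w(x)=g(y)+A\,d(x)^{\beta/2}+M,
\]
with $A>0$ tuned to $[g]_{\beta,\partial\Omega}$ and the data, and $M\ge 0$ absorbing $\|u\|_{L^\infty(\Omega)}$ (itself controlled by Theorem \ref{ABP-two} in terms of $\|g\|_\infty$ and $\|f\|_{L^n}$). Using the radial decomposition of Subsection \ref{Radial_solutions} with $\psi(r)=A(r-R)^{\beta/2}$, $D^2 w$ has the eigenvalue $\psi''(\rho)$ along $\hat\rho:=(x-z)/\rho$ and $\psi'(\rho)/\rho$ with multiplicity $n-1$; since $\beta/2<1$ one has $\psi''<0<\psi'/\rho$, and the Case 1 formula gives
\[
\mathcal M_{\bf a}(D^2 w)=A\,\tfrac{\beta}{2}\,(\rho-R)^{\beta/2-2}\Bigl[a_1\bigl(\tfrac{\beta}{2}-1\bigr)+\hat a_1\,\tfrac{\rho-R}{\rho}\Bigr],
\]
whose leading order as $\rho\to R^+$ is a large negative multiple of $(\rho-R)^{\beta/2-2}$. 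Choosing $A$ large enough (in terms of $\|f^-\|_\infty$, $R$, $\beta$, $\tilde a$) forces $\mathcal M_{\bf a}(D^2 w)\le -\|f^-\|_\infty$ in some $\Omega\cap B_\delta(y)$. The boundary match uses (b) to estimate $A\,d(x)^{\beta/2}\ge A\,c_R^{\beta/2}|x-y|^\beta$ on $\partial\Omega\cap B_\delta(y)$, so a choice $A\gtrsim R^{\beta/2}[g]_{\beta,\partial\Omega}$ gives $w\ge g$ there, while $M$ handles the complement. Comparison yields $u\le w$ in $\Omega\cap B_\delta(y)$, hence $u(x)-g(y)\le A|x-y|^{\beta/2}$ near $y$, which combined with a trivial bound away from $y$ gives \eqref{Holder-bdary1}. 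The reverse inequality $u(y)-u(x)\le C|x-y|^{\beta/2}$, and hence the supersolution addendum, follows by the symmetric subsolution barrier $-w$, equivalently by applying the above to $-u$ through the duality $\tilde{\mathcal M}_{\bf a}=\mathcal M_{{\bf a}'}$ of Subsection \ref{dua5}.

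Part (ii) is obtained by repeating the scheme with the improved ansatz $\psi(r)=(r-R)^{(1+\beta)/2}$, after replacing the constant $g(y)$ by the tangential first-order Taylor polynomial $T_y g(x)=g(y)+\nabla_\tau g(y)\cdot(x-y)$; this polynomial is affine (hence invisible to $\mathcal M_{\bf a}$), while the uniform Lipschitz boundary together with $g\in C^{1,\beta}(\partial\Omega)$ guarantees $|g(x)-T_y g(x)|\le C|x-y|^{1+\beta}$ on $\partial\Omega$, which is absorbed by the improved barrier.

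\textbf{Main obstacle.} The crux is carrying out the barrier estimate with a constant depending only on $n,\tilde a,R,\beta$ (and $L$ in (ii)), uniformly for $\mathcal M_{\bf a}\in\overline{\mathcal A}$. The bracket $a_1(\beta/2-1)+\hat a_1(\rho-R)/\rho$ is strictly negative at $\rho=R^+$ only when $a_1>0$, and the dual subsolution barrier needs $a_n>0$, so the two extremes of the weight vector must be interchanged through the duality of Subsection \ref{dua5}. The resulting coefficient-free bound is then secured via the Pucci-type estimates $\mathcal M_{\bf a}\le n\tilde a\,\lambda_n$ and $\mathcal M_{\bf a}\ge n\tilde a\,\lambda_1$ used in the preceding sections, which let one convert the radial computation into an estimate depending only on $\tilde a$ and the sphere radius $R$.
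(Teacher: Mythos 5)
Your ansatz has a genuine gap that the paper's proof avoids with a crucially different choice of barrier.

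You use the \emph{radial} barrier $w(x)=g(y)+A\,d(x)^{\beta/2}+M$ with $d(x)=|x-z|-R$. With $\psi(r)=A(r-R)^{\beta/2}$, the Hessian of $\psi(\rho)$ has one negative eigenvalue $\psi''(\rho)$ and $n-1$ \emph{positive} eigenvalues $\psi'(\rho)/\rho>0$, so
\[
\mathcal M_{\bf a}(D^2 w)=a_1\,\psi''(\rho)+\hat a_1\,\frac{\psi'(\rho)}{\rho}.
\]
If $a_1=0$ this is $\ge 0$ everywhere in $\Omega$, so $w$ is a \emph{sub}solution and not a supersolution, and the comparison principle cannot give $u\le w$. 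The lemma, however, is stated for all $\mathcal M_{\bf a}\in\overline{\mathcal A}$, which includes operators with $a_1=0$ such as $\mathcal P^+_k$ with $k<n$, and even operators with $a_1=a_n=0$ (e.g.\ $\mathbf a=(0,1,0)$ in $\mathbb R^3$). Your proposed repairs do not close this gap: the ``Pucci-type'' bound $\mathcal M_{\bf a}\le n\tilde a\,\lambda_n$ applied to the radial barrier gives $\mathcal M_{\bf a}(D^2w)\le n\tilde a\,\psi'(\rho)/\rho>0$, which is the wrong sign for a supersolution barrier; and duality replaces $a_1$ by $a_n$, so it does not help when both extremes vanish.

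The paper instead uses the \emph{flat (slab)} barrier $\varphi(x)=C_1([g]_\beta+\varepsilon)\,x_n^{\beta/2}-\tfrac{1}{2|\mathbf a|}\|f^-\|_\infty |x|^2$ in coordinates where $y=0$ and the exterior ball is $\{x_1^2+\dots+x_{n-1}^2+(x_n-R)^2\le R^2\}$. The function $x_n^{\beta/2}$ has a rank-one Hessian with a single negative eigenvalue and $n-1$ zero eigenvalues; therefore
\[
\mathcal M_{\bf a}\!\left(D^2\varphi\right)=a_1\,C_1([g]_\beta+\varepsilon)\,\tfrac{\beta}{2}\bigl(\tfrac{\beta}{2}-1\bigr)x_n^{\beta/2-2}-\|f^-\|_\infty\le -f^-(x)
\]
for \emph{every} $a_1\ge 0$, since the first term is non-positive regardless. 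The quadratic term contributes the full $-\|f^-\|_\infty$ by itself because $\mathcal M_{\bf a}(X-tI)=\mathcal M_{\bf a}(X)-t|\mathbf a|$, and the boundary match is guaranteed globally by the geometric fact $|x|^2\le 2Rx_n$ on $\overline\Omega$, which converts $|g(x)|\le[g]_\beta|x|^\beta$ into a bound by $x_n^{\beta/2}$ and also absorbs $-\tfrac{c}{|\mathbf a|}|x|^2$ into the $x_n^{\beta/2}$ term. So the key idea you are missing is to replace the radial distance to the exterior sphere's center by the linear ``height'' $x_n$ above the tangent plane, precisely so that the positive tangential eigenvalues of the barrier's Hessian disappear. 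Your strategy for part (ii) (subtract the tangential affine Taylor polynomial) and the duality step for supersolutions are correct and coincide with the paper's; only the core barrier construction needs to be changed as above.
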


\begin{proof} 
We treat in detail the case of subsolutions. The result for subsolutions will follow by duality.
\vskip0.2cm

Therefore, suppose that $u \in$\,usc$(\overline \Omega)$ is a subsolution of the equation $\mathcal M_{\bf a}(D^2u) =f$ in $\Omega$  such that $u=g$ on $\partial\Omega$.  

Let $y \in \partial \Omega$ and $B_R$ a ball of radius $R$, centered at $x_0\in\mathbb R^n$, such that $y \in \partial B_R$ and $\overline \Omega\subset \overline B_R$, according to (S).  Supposing, as we may, $y=(0,\dots,0,0)$ and  $x_0=(0,\dots,0,R)$, then $\overline B_R$ is described by the inequality $x_1^2+\dots x_{n-1}^2+ (x_n-R)^2 \le R^2$. 

It follows that 
\begin{equation}\label{fromS}
|x|^2\le 2Rx_n, \ \ x \in\overline\Omega.
\end{equation}

\vskip0.3cm

{\it Case (i)} 

\vskip0.2cm

By assumption on $g$ and (\ref{fromS}), we have
\begin{equation}\label{gi1y}
|g(x)| \le [g]_{\beta,\Omega}\, |x|^{\beta},  \ \ x \in \overline \Omega,
\end{equation}

To simplify, we may suppose:  $g(0)=0$, so that in particular
\begin{equation}\label{gi1}
g(x) \le [g]_{\beta,\Omega} |x|^{\beta} \le (2R)^{\beta/2}[g]_{\beta,\Omega}\,x_n^{\beta/2}, \ \ x \in \overline \Omega.
\end{equation}

Next, we define
\begin{equation}\label{Ctilde}
\varphi(x)=C_1\,([g]_{\beta,\Omega}+\varepsilon)\,x_n^{\beta/2} - \frac1{2|{\bf a}|}\,\|f^-\|_{_{L^\infty(\Omega)}}|x|^2 
\end{equation}
where $\varepsilon$ is any positive number and $C_1\ge(2R)^{\beta/2} + \frac{(2R)^{2-\beta/2}}{2|{\bf a}|}\,\frac{\|f^-\|}{[g]_{_{\beta,\Omega}}+\varepsilon}$. 

Thus from (\ref{gi1}) 
\begin{equation}\label{i-bdary}
u(x) = g(x) \le \varphi(x) \ \ \hbox{\text on}\ \partial\Omega.
\end{equation}
 Moreover, $\varphi$ is a supersolution in $\Omega$:
\begin{equation}\label{i-supersoln}
\begin{split}
\mathcal M_{\bf a}(D^2\varphi) &=a_1\, C_1\, \left([g]_{\beta,\Omega}+\varepsilon\right)\tfrac{\beta}{2}\left(\tfrac{\beta}{2}-1\right)  \,x_n^{\frac{\beta}{2}-2}- \|f^-\|_{_{L^\infty(\Omega)}} \\
&\le  -f^-(x) \ \ \hbox{\text in}\ \Omega.
\end{split}
\end{equation}

By the comparison principle $u(x) \le \varphi(x)$ for all $x \in \Omega$, from which
\begin{equation}\label{Holder-bdary-above-0}
\frac{u(x)}{|x|^{\beta/2}}\le C_2\,\left([g]_{\beta,\Omega}+ \|f^-\|_{_{L^\infty(\Omega)}}\right). 
\end{equation}
Then for an arbitrary $y \in \partial\Omega$ we have
\begin{equation}\label{Holder-bdary-above-y}
\frac{u(x)-u(y)}{|x-y|^{\beta/2}}\le C\,\left([g]_{\beta,\Omega}+ \|f^-\|_{_{L^\infty(\Omega)}}\right),
\end{equation}
from which (\ref{Holder-bdary2}) follows.
\vskip0.3cm

{\it Case (ii)} 

\vskip0.2cm

By assumption on $g$ and on $\partial\Omega$, we have
\begin{equation}\label{g1y}
|g(x) - g(y)- \langle Dg(y), x-y\rangle| \le C_1[Dg]_{\beta,\Omega} |x-y|^{1+\beta},  \ \ x \in \overline \Omega,
\end{equation}
where $C_1$ is a positive constant depending on the Lipschitz constant $L$ for $\partial \Omega$.

We adopt the above simplifications: $y=(0,\dots,0,0)$,  $x_0=(0,\dots,0,R)$, $g(y)=0$, so that in particular
\begin{equation}\label{g1}
g(x) \le \langle Dg(0),x\rangle+C_1[Dg]_{\beta,\Omega} |x|^{1+\beta}, \ \ x \in \overline \Omega.
\end{equation}

 Therefore 
\begin{equation}\label{g2}
g(x) \le \langle Dg(0),x\rangle+C_2[Dg]_{\beta,\Omega}\, x_n^{(1+\beta)/2}, \ \ x \in \overline \Omega,
\end{equation}
where $C_2$ is a positive constant depending on $L$, $R$ and $\beta$.

Next, we define
\begin{equation}\label{Ctilde}
\varphi(x)= \langle Dg(0),x\rangle+C_3\left([Dg]_{\beta,\Omega}+\varepsilon\right) x_n^{(1+\beta)/2} - \frac1{2|{\bf a}|}\,\|f^-\|_{_{L^\infty(\Omega)}},
\end{equation}
where $\varepsilon$ is any positive number and $C_3\ge C_2 + \frac{(2R)^{1-\beta/2}}{2|{\bf a}|}\,\frac{\|f^-\|}{[Dg]_{_{\beta,\Omega}}+\varepsilon}$. 

\vskip0.2cm

Therefore by  (\ref{g2}):
\begin{equation}\label{ii-bdary}
u(x) = g(x) \le \varphi(x) \ \ \hbox{\text on}\ \partial\Omega.
\end{equation}
Moreover $\varphi$ is a supersolution in $\Omega$:
\begin{equation}\label{ii-supersoln}
\begin{split}
\mathcal M_{\bf a}(D^2\varphi) &=a_1 C_3\, \left([Dg]_{\beta,\Omega}+\varepsilon\right)  \tfrac{\beta+1}{2}\tfrac{\beta-1}{2}\,x_n^{\frac{\beta+1}{2}-2}- \|f^-\|_{_{L^\infty(\Omega)}} \\
&\le  -f^-(x) \ \ \hbox{\text in}\ \Omega.
\end{split}
\end{equation}

By the comparison principle, we get $u \le \varphi$ in $\Omega$, and therefore by (\ref{Ctilde}):
\begin{equation}
\begin{split}
\frac{u(x)}{|x|^{(1+\beta)/2}} &\le  |Dg(0)|\,|x|^{(1-\beta)/2}+C_4\left([Dg]_{\beta,\Omega}+ \|f^-\|_{_{L^\infty(\Omega)}}\right) \\
& \le  (4R)^{(1-\beta)/2}\,[Dg]_{_{0,\partial\Omega}}+ C_4\left([Dg]_{\beta,\Omega}+ \|f^-\|_{_{L^\infty(\Omega)}}\right).
\end{split}
\end{equation}

Then for arbitrary $y \in \partial\Omega$ we have
\begin{equation}\label{Holder-bdary-above-y-2}
\frac{u(x)-u(y)}{|x-y|^{(1+\beta)/2}} \le  (4R)^{(1-\beta)/2}\,[Dg]_{_{0,\partial\Omega}}+ C_4\left([Dg]_{\beta,\Omega}+ \|f^-\|_{_{L^\infty(\Omega)}}\right)
\end{equation}
for all $x \in \Omega$, from which (\ref{Holder-bdary2}) follows. 
\end{proof}

We are ready to show the global H\"older estimates of Theorem \ref{Holder-global}.

\vskip0.2cm

\noindent {\bf Proof of Theorem \ref{Holder-global}}. 
 Let $\alpha \in (0,1)$ be the H\"older exponent of Theorem \ref{Holder}. From the boundary H\"older estimates of Lemma \ref{Holder-bdary-lem} we deduce an estimate of type
\begin{equation}\label{bdary-est}
|u(x)-u(y)| \le C(g,f) |x-y|^{\gamma_b}, \  \ x\in\Omega, \ y \in\partial \Omega,
\end{equation}
where $\gamma_{b} = \beta/2$  in the case (i) and $\gamma_{b}  = (1+\beta)/2$  in the case (ii).

\vskip0.2cm 
We want to show a global H\"older estimate with exponent $\gamma=\min(\alpha,\gamma_b)$.
\vskip0.2cm
For proving the result we follow the same lines of \cite[Proposition 4.13]{CC}.

Thus, for $x,y \in \Omega$ we set $d_x=$\,dist$(x,\partial \Omega)=|x-x_0|$, $d_y=$\,dist$(y,\partial \Omega)=|y-y_0|$ for $x_0,y_0\in \partial\Omega$, and suppose $d_y \le d_x$. 

Here the constants $C_i$ will depend at most on $n$, $a^*$, $\tilde a$, $R$, $L$ and $\beta$.

\vskip0.2cm

(i) Suppose $|x-y| \le d_x/2$. Since $y \in \overline B_{d_x/2}(x) \subset B_{d_x}(x) \subset \Omega$, then we can apply Theorem \ref{Holder} properly scaled to the function $u(x)-u(x_0)$, and then the H\"older boundary estimate (\ref{bdary-est}) obtaining
\begin{equation}\label{internal}
\frac{|u(x)-u(y)|}{|x-y|^\alpha}\, d_x^\alpha\le C_1\|u-u(x_0)\|_{L^\infty(B_{d_x}(x))} \le C_2K\,d_x^{\gamma_b}
\end{equation}

Recall that $\gamma \le \alpha$. Since $d_x/|x-y| \ge 2$, from this we get
\begin{equation}\label{internal-2}
\frac{|u(x)-u(y)|}{|x-y|^\gamma}\,d_x^\gamma  \le\frac{u(x)-u(y)}{|x-y|^\alpha}\, d_x^\alpha \le C_2K\, d_x^{\gamma_b}.
\end{equation}
Since also $\gamma \le \gamma_b$,
\begin{equation}\label{internal-end}
\frac{|u(x)-u(y)|}{|x-y|^\gamma} \le C_2K\,d_x^{\gamma_b -\gamma} \le C_2K\,d^{\gamma_b -\gamma}\equiv  C_3K\,.
\end{equation}

\vskip0.2cm
(ii) Suppose now $|x-y| \ge d_x/2$. Since $d_y \le d_x \le 2|x-y|$ and $|x_0-y_0| \le d_x+|x-y|+d_y$, then by (\ref{bdary-est})
\begin{equation}\label{bdary-points}
\begin{split}
|u(x)-u(y)| &\le |u(x)-u(x_0)|+|u(x_0)-u(y_0)|+|u(y_0)-u(y)|\\
& \le C_4K(d_x^{\gamma_b}+|x_0-y_0|^{\gamma_b}+d_y^{\gamma_b})  \le C_5K |x-y|^\gamma.
\end{split}
\end{equation}

 From (\ref{internal-end}) and (\ref{bdary-points}), letting $\overline C=\max(C_3,C_5)$, we deduce the desired estimate:
\begin{equation}\label{seminorm}
[u]_{\gamma,\Omega} \le \overline CK.
\end{equation}
\qed

\vskip0.2cm

In some cases, when the weights $a_i$ are concentrated near the one of the extremal eigenvalues, we obtain an explicit interior H\"older exponent. 

\begin{lem}\label{Holder-asy-int} Let $\mathcal M_{\bf a} \in \overline{\mathcal A}$ be such that   either   $a_1 \ge \hat a _1$ (resp. $a_n \ge \hat a _n$).

Suppose that  $u\in usc(B_1)$ (resp. $u\in lsc(B_1)$)  is a viscosity subsolution (resp. supersolution) of the equation $\mathcal M_{\bf a}(D^2u) =f$  in $B_1$, a ball of radius $1$, and $f$ is continuous and bounded above (resp. below) in $B_1$.

Then $u \in C^\alpha(B_1)$ and the following interior $C^\alpha$ estimate holds:
\begin{equation}\label{asy1}
[u]_{\alpha,B_{1/2}}\le C\left( \|u\|_{L^\infty(B_1)}+ \|f^-\|_{L^\infty(B_1)}\right),
\end{equation}
resp.
\begin{equation}\label{asy2}
[u]_{\alpha,B_{1/2}}\le C\left( \|u\|_{L^\infty(B_1)}+ \|f^+\|_{L^\infty(B_1)}\right),
\end{equation}
 where $B_{1/2}$ is a ball of radius $1/2$ concentric with  $B_1$, 
$$\alpha = 1-\hat a_1/a_1 \ \  (\hbox{\text resp.} \ \alpha = 1-\hat a_n/a_n),$$
 and $C$ a positive costant depending on $n$, $a_1$ and $\hat a_1$ (resp. $a_n$ and $\hat a_n$). 
\end{lem}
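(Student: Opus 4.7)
The plan is to use the explicit $\mathcal M_{\bf a}$-harmonic radial function $|y-x|^{\alpha}$ from (\ref{radialsoln}), valid precisely under the hypothesis $a_{1}\ge\hat a_{1}$ with $\alpha=1-\hat a_{1}/a_{1}$, as a one-sided barrier, and to run a contradiction argument that circumvents the singularity of this barrier at its apex. The degenerate case $a_{1}=\hat a_{1}$ gives $\alpha=0$ and the estimate is trivial, so one may assume $a_{1}>\hat a_{1}$.

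Fix $x\in B_{1/2}$ and set $R=1/2$, so that $B_{R}(x)\subset B_{1}$. For constants $C,c>0$ to be determined, introduce
\begin{equation*}
\psi_{x}(y) := u(x)+C\,|y-x|^{\alpha}-c\,|y-x|^{2}.
\end{equation*}
By the radial computation in Subsection \ref{Radial_solutions} together with positive homogeneity, $\mathcal M_{\bf a}(D^{2}|y-x|^{\alpha})=0$ classically on $B_{R}(x)\setminus\{x\}$; since subtracting $c|y-x|^{2}$ shifts every Hessian eigenvalue by $-2c$, one obtains $\mathcal M_{\bf a}(D^{2}\psi_{x}(y))=-2c|\mathbf{a}|$ pointwise for $y\neq x$. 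Choose $c>\|f^{-}\|_{L^{\infty}(B_{1})}/(2|\mathbf{a}|)$ and then take $C$ of size $O(\|u\|_{L^{\infty}(B_{1})}+\|f^{-}\|_{L^{\infty}(B_{1})})$, large enough that $\psi_{x}\ge u$ on $\partial B_{R}(x)$.

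The core step is to show $u\le\psi_{x}$ on $\overline{B_{R}(x)}$. Assume not: the usc function $\Phi:=u-\psi_{x}$ then attains a positive maximum at some $y^{*}\in\overline{B_{R}(x)}$. The identity $\Phi(x)=0$ forces $y^{*}\neq x$, and the boundary choice of $C$ rules out $y^{*}\in\partial B_{R}(x)$. Hence $y^{*}\in B_{R}(x)\setminus\{x\}$, where $\psi_{x}$ is $C^{2}$ and can be used as a test function from above; the viscosity subsolution condition then yields
\begin{equation*}
-2c|\mathbf{a}|=\mathcal M_{\bf a}(D^{2}\psi_{x}(y^{*}))\ge f(y^{*})\ge-\|f^{-}\|_{L^{\infty}(B_{1})},
\end{equation*}
contradicting the choice of $c$. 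The main obstacle is precisely this singularity of $|y-x|^{\alpha}$ at $y=x$, which prevents a direct application of the comparison principle of Theorem \ref{comparison}; the contradiction argument bypasses it by exploiting the match $\psi_{x}(x)=u(x)$, which automatically forces any would-be maximizer away from the apex into the region where $\psi_{x}$ is classical.

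Once $u(y)\le u(x)+C|y-x|^{\alpha}$ is established on $B_{R}(x)$, swapping the roles of $x$ and $y$ yields the reverse inequality and therefore $|u(x)-u(y)|\le C|x-y|^{\alpha}$ for all $x,y\in B_{1/2}$ with $|x-y|<1/2$; the complementary case $|x-y|\ge1/2$ follows from the crude bound $|u(x)-u(y)|\le 2\|u\|_{L^{\infty}(B_{1})}\le 2^{1+\alpha}\|u\|_{L^{\infty}(B_{1})}|x-y|^{\alpha}$, proving (\ref{asy1}). The supersolution estimate (\ref{asy2}) reduces to the subsolution case via the duality of Subsection \ref{dua5}: if $u$ is a supersolution of $\mathcal M_{\bf a}[u]=f$ with $a_{n}\ge\hat a_{n}$, then $v=-u$ is a subsolution of $\mathcal M_{\mathbf{a}'}[v]=-f$ with $\mathbf{a}'=(a_{n},\dots,a_{1})$ satisfying $a_{1}'=a_{n}\ge\hat a_{n}=\hat a_{1}'$, and the exponent $1-\hat a_{1}'/a_{1}'=1-\hat a_{n}/a_{n}$ matches.
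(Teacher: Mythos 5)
Your proof is correct and follows essentially the same strategy as the paper's: both build the barrier $u(x)+C|y-x|^{\alpha}-c|y-x|^{2}$ out of the $\mathcal M_{\bf a}$-harmonic radial profile $|y-x|^{\alpha}$ from Subsection \ref{Radial_solutions}, compare with $u$ in the ball $B_{1/2}(x)$, and close via duality for supersolutions. The one place where you diverge is in how the singularity of the barrier at its apex is handled: the paper simply invokes ``the comparison principle,'' which strictly speaking cannot be applied directly on the full ball (the subjet of $|y-x|^{\alpha}$ at $y=x$ is all of $\mathbb R^{n}\times\mathcal S^{n}$, so $\psi_{x}$ is not a viscosity supersolution there; one must comparison on the punctured ball, taking $\{x\}$ as part of the boundary where $\psi_x(x)=u(x)$), whereas your contact-point contradiction argument makes this completely explicit by using $\Phi(x)=0$ to push any positive maximizer off the apex into the region where $\psi_{x}$ is classical. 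This is a marginally more self-contained presentation of the same idea, and is a genuine improvement in rigor at that one step.
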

\begin{proof}
We only treat the case of subsolution, when $a_1\ge \hat a_1$. The case of supersolutions, when  $a_n\ge \hat a_n$,  will follow by duality. 

We assume that  the balls $B_1$ and $B_{1/2}$ are centered at $0$. Then we take $x',x'' \in B_{1/2}$, and consider the ball $B_{1/2}(x')$. We note that  on $\partial B_{1/2}(x')$:
\begin{equation}\label{asy1/2}
u(x)-u(x') \le 2\|u\|_{L^\infty(B_1)} \le 2^{1+\alpha}\|u\|_{L^\infty(B_1)}\,|x-x'|^\alpha
\end{equation}
Next, we define 
\begin{equation}\label{phi-Holder-int}
\varphi(x)= C_1\,\|u\|_{L^\infty(B_1)}\,|x|^\alpha-\frac{1}{2|{\bf a}|}\,\|f^-\|_{L^\infty(B_1)}\,|x|^2
\end{equation}
where $C_1 =2^{1+\alpha}+ \frac{1}{2|{\bf a}|}\,\frac{\|f^-\|}{\|u\|}$ (in the nontrivial case $u\not\equiv0$).

Thus on $\partial B_{1/2}(x')$:
\begin{equation}\label{asy1/2}
\begin{split}
u(x)-u(x') &\le 2^{1-\alpha}\|u\|_{L^\infty(B_1)}\,|x-x'|^\alpha\\
& \le \varphi(x-x').
\end{split}
\end{equation}

On the other hand,  
\begin{equation}\label{asy-phi}
\begin{split}
\mathcal M_{\bf a}(D^2\varphi(x-x')) &= C_1\left(a_1(\alpha-1)+\hat a_1\right)|x-x'|^{\alpha-2}\\
&-\|f^-\|_{L^\infty(\Omega)}\le -f^-(x).
\end{split}
\end{equation}

By (\ref{asy-phi}) and (\ref{asy1/2}), using the comparison principle  we get $u(x)-u(x') \le \varphi(x-x')$ in $B_{1/2}(x')$, from which in particular:
\begin{equation}\label{asy1}
u(x'')-u(x') \le C\left(\|u\|_{L^\infty(B_1)}+\|f^-\|_{L^\infty(B_1)}\right)|x''-x'|^\alpha.
\end{equation}
Interchanging the role of $x'$ and $x''$, we get (\ref{asy1}).
\end{proof} 

\vskip0.2cm

Combining Lemma \ref{Holder-bdary-lem} with Lemma \ref{Holder-asy-int}, we obtain the global estimates of Theorem \ref{Holder-asy-global}.

\vskip0.2cm

{\bf Proof of Theorem \ref{Holder-asy-global}}. To obtain (\ref{global-Holder1-intro}) and (\ref{global-Holder2-intro}) it  is sufficient to follow the proof of Theorem \ref{Holder-global}. 

The above estimates (\ref{global-Holder1-asy}) and (\ref{global-Holder2-asy})  are in particular obtained from this proof  taking $\gamma=\alpha$.  

We use once more the boundary H\"older estimates of Lemma \ref{Holder-bdary-lem}, with $\beta=2\alpha$ for (\ref{global-Holder1-asy}) and $\beta=2\alpha-1$ for (\ref{global-Holder2-asy}), as there. But we use here the interior $C^\alpha$ estimates of Lemma \ref{Holder-asy-int}, instead of Theorem \ref{Holder}.
\qed

\vskip0.2cm

\begin{rem}\label{Lip}
Note that  Theorem \ref{Holder-asy-global} provides Lipschitz estimates only in the case $\hat a_1=0$ and $\hat a_n=0$, corresponding to the operators $\mathcal M_{{\bf e}_1}[u]=\lambda_1[u]$ and $\mathcal M_{{\bf e}_n}[u]=\lambda_n[u]$. See for instance \cite{BGI}.
\end{rem}

\vskip0.2cm

\begin{rem}\label{worseandworse}
Asking for higher regularity of viscosity solutions, we cannot expect viscosity solutions more regular than $C^2.$ Indeed,  we may consider  the function $u:=x_1^2+\omega(x_2)-x_3^2,$ in $\mathbb{R}^3$,  where $\omega$ is a $C^2$ function but no more regular. The same regularity holds for $u$. Assuming in addition $|\omega''(x_2)|<2$, by a straightforward computation we get
$$
D^2u(x)=\mbox{Diag}[(2,\omega''(x_2),-2)],
$$
so that $\mathcal{M}(D^2u(x)=0.$  So we have found a solution $u\in C^2,$ which does not belong to any $C^{2,\beta}$ space, $\beta\in (0,1)$. \end{rem}

\section{The strong maximum principle}\label{strong}

The strong maximum principle for an elliptic operator $F$, such that $F(0)=0$, means that a subsolution of the equation $F[u]=0$ in an open set $\Omega$ cannot have a maximum at a point of $\Omega$ unless to be constant. 

Analogously, the strong minimum principle means that a supersolution $u$ cannot have a minimum at a point of $\Omega$ unless $u$ is constant.

\vskip0.2cm

One of the most elegant proof of the strong maximum principle, also known for this reason as the celebrated Hopf maximum principle \cite{HOP}, is based on boundary point lemma, which we establishes here below for the class of weigthed partial trace operators $\mathcal M_{\bf a}$. To obtain a strong maximum principle it is sufficient to state this lemma just for a ball.

\vskip0.2cm
\begin{lem}\label{Hopf} {\rm (Hopf boundary point lemma)} Let $u\in$\,{\rm usc}$(\overline B)$ be a viscosity subsolution of the equation $\mathcal M_{\bf a}[u]=0$ in a ball $B$, with $M_{\bf a}\in \mathcal A_1$. Let $x_0 \in \partial B$. If $u(x_0)>u(x)$ for all $x \in B$, then the outer  normal derivative of $u$ at $x_0$, if it exists, satisfies the strict inequality 
\begin{equation}\label{Hopf-eq-max}
\frac{\partial u}{\partial \nu}(x_0)>0.
\end{equation}
On the other hand, let $u\in$\,{\rm lsc}$(\overline B)$ be a viscosity supersolution of the equation $\mathcal M_{\bf a}[u]=0$ in a ball $B$, with $M_{\bf a}\in \mathcal A_n$.  If $u(x_0)<u(x)$ for all $x \in B$, then the outer  normal derivative of $u$ at $x_0$, if it exists, satisfies the strict inequality 
\begin{equation}\label{Hopf-eq-min}
\frac{\partial u}{\partial \nu}(x_0)<0.
\end{equation}
\end{lem}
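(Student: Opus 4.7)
The plan is to run the classical Hopf barrier comparison argument, with the assumption $a_1>0$ (resp.\ $a_n>0$) entering at a single computation certifying that the barrier is a classical supersolution of $\mathcal M_{\bf a}[\,\cdot\,]=0$. Setting $B=B_R(y)$ and $\nu=(x_0-y)/R$ the outer unit normal at $x_0$, I would work in the annulus $A=B_R(y)\setminus\overline{B_{R/2}(y)}$. Upper semicontinuity of $u$ on the compact inner sphere, combined with the strict-maximum hypothesis, gives some $\eta>0$ with $u\le u(x_0)-\eta$ there. I then introduce the standard radial barrier
\[ h(x)=e^{-\alpha|x-y|^2}-e^{-\alpha R^2},\qquad v(x)=u(x_0)-\epsilon\,h(x), \]
with $\alpha, \epsilon>0$ to be chosen.

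The crucial step is to check that $v$ is a classical $\mathcal M_{\bf a}$-supersolution in $A$. A direct computation of $D^2v=-\epsilon D^2h$ shows that, for $\alpha$ large enough, its ordered eigenvalues consist of one strictly negative eigenvalue in the radial direction and $n-1$ equal positive eigenvalues transversely. The negative eigenvalue therefore gets multiplied by the coefficient $a_1$, and one finds
\[ \mathcal M_{\bf a}(D^2v)\;=\;2\alpha\epsilon\,e^{-\alpha|x-y|^2}\bigl(|{\bf a}|-2a_1\alpha|x-y|^2\bigr), \]
which becomes non-positive throughout $A$ as soon as $\alpha\ge 2|{\bf a}|/(a_1R^2)$; this is exactly where the hypothesis $a_1>0$ is used. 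The boundary inequality $v\ge u$ on $\partial A$ is immediate on the outer sphere (where $v\equiv u(x_0)$) and, on the inner sphere, holds after choosing $\epsilon\le\eta/\max_A h$. The comparison principle of Theorem~\ref{comparison} then gives $u\le v$ in $\overline A$, with equality at $x_0$. Since $\partial v/\partial\nu(x_0)=-\epsilon\,\partial_\nu h(x_0)=2\alpha R\,\epsilon\,e^{-\alpha R^2}>0$ and $x_0$ realizes the maximum of $u-v$ on $\overline A$, the one-sided bound $\partial u/\partial\nu(x_0)\ge\partial v/\partial\nu(x_0)>0$ follows at once, whenever the outer normal derivative of $u$ exists.

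For the supersolution case I would invoke the duality developed in Subsection~\ref{dua5}: if $\mathcal M_{\bf a}\in\mathcal A_n$ and $u$ is a supersolution, then $\tilde u:=-u$ is a viscosity subsolution of $\mathcal M_{{\bf a}'}[\tilde u]=0$ with the reflected weight ${\bf a}'=(a_n,\dots,a_1)$, whose leading entry $(a')_1=a_n$ is positive; hence $\mathcal M_{{\bf a}'}\in\mathcal A_1$ and the first case applies to $\tilde u$ at $x_0$, after which a change of sign yields the stated conclusion. The principal technical obstacle is getting the eigenvalue ordering of $D^2v$ right: the single ``special'' eigenvalue of $D^2h$ is the \emph{largest} $\lambda_n$, but the sign flip $D^2v=-\epsilon D^2h$ moves it to the \emph{smallest} slot, so that the weight in the critical bracketed term becomes $a_1$ rather than $a_n$. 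It is precisely this sign inversion that aligns the assumption $\mathcal M_{\bf a}\in\mathcal A_1$ with the Hopf argument in the subsolution direction.
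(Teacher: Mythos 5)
Your proof is correct and follows essentially the same approach as the paper: the same annular barrier $e^{-\alpha r^2}-e^{-\alpha R^2}$, the same eigenvalue computation feeding the comparison principle (Theorem~\ref{comparison}), and the same duality reduction for the supersolution half. The only cosmetic difference is that you apply $\mathcal M_{\bf a}$ directly to the supersolution $v=u(x_0)-\epsilon h$, whereas the paper phrases the identical computation as $h$ being a subsolution of the dual operator $\mathcal M_{{\bf a}'}$ and $u(x_0)-u$ a supersolution thereof.
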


\begin{proof} We just prove the theorem for subsolutions.

We may suppose that $B$ is centered at the origin, i.e  $B = \{|x|<R\}$ for $R>0$. Arguing as in \cite[Section 3.2]{GT}, and considering $0 < \rho <R$, we introduce the radial test function $v(x)=e^{-\alpha r^2} - e^{-\alpha R^2}$, with $r=|x|$. 

By direct computation, see Remark \ref{radial} in Section \ref{aux}, we get
\begin{equation}
\mathcal M_{{\bf a}'}(D^2v) \ge 2\alpha \left(a_1(2\alpha \rho^2-1)-\sum_{i=2}^na_i \right)e^{-\alpha r^2}
\end{equation}
 for $ r\ge \rho$ and $\mathcal M[v]\ge 0$ for $\alpha>0$ large enough.

Since $u(x_0)-u(x)>0$ on $|x|=\rho$, there is a constant $\varepsilon>0$ such that  $u(x_0) - u(x)-\varepsilon v(x)\ge 0$ on $|x|=\rho$, as well as on $|x|=R$. Therefore $\varepsilon v(x) \le u(x_0) - u(x) $ on the boundary of the annulus  $A_{\rho,R}= \{\rho < |x| < R\}$. 

By the comparison principle, the same inequality holds in $A_{\rho,R}$ . In fact $\mathcal M_{{\bf a}'}[\varepsilon v]=\varepsilon \mathcal M_{{\bf a}'}[v] \ge 0$, by positive homogeneity, and by duality $\mathcal M_{{\bf a}'}[u(x_0)-u] \le 0$, so that $\varepsilon v$ and $u(x_0)-u$ are respectively a subsolution and a supersolution in $A$, and we can apply Theorem \ref{comparison} to deduce that
$$
u(x_0)-u(x) \ge \varepsilon v(x) \ \ \hbox{\text for all} \ x \in A.
$$

Taking $x=x_0-t \frac {x_0}{R}$ in the latter inequality, dividing by $t>0$ and letting $t \to 0^+$, we get 
$$
\frac{\partial u}{\partial \nu} (x_0) \ge -\varepsilon \frac{d}{dr}\left( e^{-\alpha r^2}\right)\Big|_{r=R} = 2  \varepsilon \alpha R e^{-\alpha R^2},
$$
which proves (\ref{Hopf-eq-max}).
\end{proof}

\vskip0.2cm
Following \cite{GT} we remark that, whether or not the normal derivative exists, we have instead of (\ref{Hopf-eq-max}) and (\ref{Hopf-eq-min}), respectively, the inequalities
\begin{equation}\label{Hopf-eq2}
\liminf_{\substack{{x\to x_0}\\{x \in \Sigma}}} \frac{u(x_0)-u(x)}{|x-x_0|}>0,
\end{equation}
and 
\begin{equation}\label{Hopf-eq2}
\liminf_{\substack{{x\to x_0}\\{x \in \Sigma}}} \frac{u(x_0)-u(x)}{|x-x_0|}<0,
\end{equation}
where $\Sigma$ is any circular cone of vertex $x_0$ and opening less than $\pi$ with axis along the normal direction at the boundary point $x_0$.

\vskip0.2cm

 The Hopf boundary point lemma can be used to prove the strong maximum principle for classical subsolutions or viscosity subsolutions which are differentiable. A strong maximum principle, valid also for nonsmooth viscosity solutions, can be  obtained through the weak Harnack inequality of Lemma \ref{w-H}.

For a detailed discussion on the strong maximum principle, we refer to the paper \cite{PS} and the papers quoted therein. In the case of fully nonlinear elliptic operators, see for instance \cite{BDL,BDL2}.

\vskip0.2cm

\begin{thm}\label{smp} {\rm(strong maximum principle)} Let $u$ be a non-negative continuous viscosity supersolution of the equation $\mathcal M_{\bf a}(D^2u)=0$, with $\mathcal M_{\bf a} \in \mathcal A_n$, in a domain $\Omega$ of $\mathbb R^n$.  If $u$ has a minimum $m$  at some point  $x_0 \in \Omega$, then $u \equiv m$ in $\Omega$. \\
Similarly, let $u$ be a continuous viscosity subsolution of the equation $\mathcal M_{\bf a}(D^2u)=0$, with $\mathcal M_{\bf a} \in \mathcal A_1$. If $u$  has a maximum $M$ at $x_0\in \Omega$, then $u\equiv M$ in $\Omega$.
\end{thm}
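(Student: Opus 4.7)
The plan is to deduce the theorem from the weak Harnack inequality (Theorem \ref{w-H}) by the standard connectedness scheme, treating the supersolution case first and reducing the subsolution case to it by the duality of Subsection \ref{dua5}. As stressed in the paragraph just before the statement, this is the route that works for arbitrary continuous viscosity solutions, whereas a Hopf-based argument requires some differentiability at the extremal point.

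For the supersolution statement, I would consider the level set $\Omega_0 := \{x \in \Omega : u(x) = m\}$, which is non-empty by hypothesis and closed in $\Omega$ by continuity of $u$. The key point is to show that $\Omega_0$ is also open in $\Omega$, whence the connectedness of $\Omega$ forces $\Omega_0 = \Omega$. Given $x_0 \in \Omega_0$, I would pick $r > 0$ so small that the cube $Q_r(x_0)$ centered at $x_0$ is contained in $\Omega$ and pass to the rescaled function $\tilde v(y) := u(x_0+ry)-m$, which is a non-negative continuous viscosity supersolution of $\mathcal M_{\bf a}(D^2 \tilde v) = 0$ on the unit cube $Q_1$ (translating by a constant and isotropic rescaling preserve the equation, using the positive homogeneity (\ref{pos-hom})). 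Applying Theorem \ref{w-H} with $f \equiv 0$, and using that $0 \in Q_{3/4}$ with $\tilde v(0)=0$, one obtains $\|\tilde v\|_{L^{p_0}(Q_{2/3})} \le C_0 \inf_{Q_{3/4}} \tilde v = 0$, so $u \equiv m$ on the open neighborhood $Q_{2r/3}(x_0)$ of $x_0$.

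For the subsolution half, the natural move is to set $w := M - u \ge 0$ and invoke the duality formula from Subsection \ref{dua5}: since $\tilde{\mathcal M}_{\bf a}(X) = \mathcal M_{{\bf a}'}(X)$ with ${\bf a}' = (a_n,\dots,a_1)$, the subsolution inequality $\mathcal M_{\bf a}(D^2 u) \ge 0$ rewrites as $\mathcal M_{{\bf a}'}(D^2 w) \le 0$ in the viscosity sense. The hypothesis $\mathcal M_{\bf a} \in \mathcal A_1$ means $a_1 > 0$, hence $\mathcal M_{{\bf a}'} \in \mathcal A_n$, and $w$ is a continuous non-negative viscosity supersolution attaining its minimum $0$ at the interior point $x_0$. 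The supersolution statement applied to $w$ then yields $w \equiv 0$, i.e.\ $u \equiv M$.

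The only real obstacle is the bookkeeping of the invariances: one has to verify that the viscosity supersolution property is genuinely stable under a constant shift, under isotropic rescaling (so that the universal constants $p_0$ and $C_0$ in Theorem \ref{w-H} are unaffected and the conclusion transfers back from $Q_1$ to $Q_r(x_0)$), and under the passage from subsolutions of $\mathcal M_{\bf a}$ to supersolutions of the dual operator $\mathcal M_{{\bf a}'}$. Once these routine checks are granted, the entire analytic content of the proof is packaged in the weak Harnack inequality together with the connectedness of the domain.
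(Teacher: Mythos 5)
Your proposal is correct and matches the paper's own argument essentially line for line: the paper also defines $A=\{u=m\}$, proves it is both closed and open via the (properly scaled) weak Harnack inequality applied to $u-m\ge 0$, concludes $A=\Omega$ by connectedness, and then treats the subsolution statement by applying the same reasoning to $M-u$ as a supersolution of the dual equation $\mathcal M_{{\bf a}'}$. The only cosmetic difference is that you make the rescaling to $Q_1$ explicit and verbalize the invariance checks, which the paper leaves to the reader.
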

\begin{proof} For the proof in the case of differentiable solutions $u$, based on the Hopf lemma, we refer to the proof of \cite[Theorem 3.5]{GT}.

Concerning viscosity supersolutions (strong minimum principle), let $A=\{x \in \Omega : u(x)=m\}$ and $B=\Omega \backslash A$, so that $A \cup B =\Omega$, $A\cap B= \emptyset$ with $A \neq \emptyset$ and $B$ is open. Moreover, we claim that $A$ is also open. Recalling that $\Omega$ is a open connected set, then $B=\emptyset$, otherwise we would have a contradiction. Then $\Omega=A$, and the first part of the theorem is proved.

We are left with proving that $A$ is open. Let $x_0 \in A$, that is $u(x_0)=m$, and suppose that the cube $Q_\ell$ of side $\ell$ centered at $x_0$ is contained in $\Omega$. By the weak Harnack inequality (\ref{w-H-ineq}), properly scaled and applied to $u-m\ge 0$, we have
\begin{equation}\label{w-H-m}
\|u-m\|_{_{L^{p_0}(Q_{2\ell/3})}}\le C_0\,\inf_{Q_{3\ell/4}}(u-m) =u(x_0)-m=0\,.
\end{equation}
The $u(x)-m$ is constant in $Q_{2/3}$ and by continuity $u(x)-m=u(x_0)-m=0$ for all $x \in Q_{2/3}$, so that $Q_{2/3}\subset A$. This shows that $A$ is open, thereby proving the claim and concluding the proof of the first part.

\vskip0.2cm
In the case of viscosity subsolutions (strong maximum principle), we argue in a similar manner, considering the set $A=\{x \in \Omega : u(x)=M\}$, using the duality and applying the weak Harnack inequality to the supersolution $M-u(x)$ of the dual equation.
\end{proof}

\vskip0.2cm
It follows that for elliptic operators $\mathcal M_{\bf a} \in \mathcal A$ both the strong maximum and minimum principle are satisfied.

\vskip0.2cm

It is plain that the strong maximum principle implies the weak maximum principle (see Section \ref{prelim}) in bounded domains. This is no more true in unbounded domains, where the strong maximum principle may hold while the weak maximum principle fails to hold. An elementary example of this fact is given by the function $u(x_1,x_2)=x_1x_2$, which is harmonic in the whole plane, and therefore satisfies the strong maximum principle in all domains of $\mathbb R^2$, but is positive in the quarter plane $\Omega=\mathbb R_+\times \mathbb R_+$ and zero on $\partial \Omega$, so that the weak maximum principle does not hold in $\Omega$.

\vskip0.2cm

Turning to bounded domains, as observed in Section \ref{prelim},  it is sufficient that $\mathcal M_{\bf a} \in \overline{\mathcal A}$ to have both the weak maximum and minimum principle. Theorem \ref{smp} requires instead $\mathcal M_{\bf a} \in \mathcal A$ to have the strong maximum and minimum principle hold together.

\vskip0.2cm

Actually, the strong maximum and minimum principle may fail when  $\mathcal M_{\bf a} \in \overline{\mathcal A}$, but $\mathcal M_{\bf a} \not\in\mathcal A$. In fact, let us consider the partial trace operator $\mathcal P^+_{k}$ defined above for $1 \le k \le n-1$:
the non-constant function $u(x)=1+\sin x_1$  has a maximum $M=2$  inside the cube $]0,\pi[^n$, even though  $\mathcal P^+_{k}(D^2u)=0$ in $]0,\pi[^n$. 

Similarly, $u(x)$  is non-negative  in the cube $]-\pi,0[^n$ and has a zero inside,  even though $\mathcal P^-_{k}(D^2u)=0$ in the cube $]-\pi,0[^n$  for $1 \le k \le n-1$.

From the proof of Theorem \ref{smp}, the weak Harnack inequality, which would imply the strong minimum principle, fails to hold in general for the partial trace operator $\mathcal P_k^-$ as soon as $k<n$.

Analogously, the Harnack inequality, which would imply both the strong maximum and minimum principle, fails to hold in general for the partial trace operators $\mathcal P_k^\pm$ as soon as $k<n$

 \section{Liouville theorems}\label{liouville}

A direct application of the Harnack inequality yields in a standard fashion the following Liouville result for entire solutions, defined in the whole $\mathbb R^n$.  See for instance \cite{ARV}.
 
\begin{thm} \label{Liouville} {\rm (Liouville theorem)} Let $\mathcal M_{\bf a} \in \mathcal A$. If $u$ is an entire viscosity solution of the equation $\mathcal M(D^2u)=0$  which is bounded above or below, then $u$ is constant.
\end{thm}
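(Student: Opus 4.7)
The idea is standard: rescale the Harnack inequality of Theorem \ref{Harnack} to cubes of arbitrary size and let the size tend to infinity. First, I would handle the two cases (bounded below / bounded above) simultaneously by duality. If $u$ is bounded above, then $v=-u$ is bounded below and, by the duality relation $\tilde{\mathcal M}_{\bf a}=\mathcal M_{{\bf a}'}$ recorded in Subsection \ref{dua5}, it satisfies $\mathcal M_{{\bf a}'}(D^2v)=0$ in $\mathbb R^n$. Since ${\bf a}'=(a_n,\dots,a_1)$ is a permutation of ${\bf a}$, we still have $\mathcal M_{{\bf a}'}\in\mathcal A$ with the same $a^*=\min(a_1,a_n)$ and the same mean $\tilde a$. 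So it suffices to treat the case when $u$ is bounded below.

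Next, set $m=\inf_{\mathbb R^n}u$ and $w=u-m\ge 0$. Since $\mathcal M_{\bf a}$ depends only on the Hessian, $w$ is an entire non-negative viscosity solution of $\mathcal M_{\bf a}(D^2w)=0$. The key point is the scaling invariance of the equation: for any $x_0\in\mathbb R^n$ and any $R>0$, the function $w_R(x):=w(x_0+Rx)$ satisfies $D^2w_R(x)=R^2D^2w(x_0+Rx)$, and by positive homogeneity of degree one (see \eqref{pos-hom}),
\begin{equation*}
\mathcal M_{\bf a}(D^2 w_R(x))=R^2\,\mathcal M_{\bf a}(D^2w(x_0+Rx))=0 \quad \text{in } Q_1 .
\end{equation*}
Moreover $w_R\ge 0$ in $Q_1$, so Theorem \ref{Harnack} applied to $w_R$ with $f\equiv 0$ yields
\begin{equation*}
\sup_{Q_{1/2}} w_R\;\le\; C\,\inf_{Q_{3/4}} w_R ,
\end{equation*}
where the constant $C$ depends only on $n$, $a^*$ and $\tilde a$, hence is independent of $R$ and $x_0$. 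Rewriting this in terms of $w$, we obtain
\begin{equation*}
\sup_{Q_{R/2}(x_0)} w\;\le\; C\,\inf_{Q_{3R/4}(x_0)} w .
\end{equation*}

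Finally I would let $R\to\infty$. Since $w\ge 0$ in $\mathbb R^n$ and $\inf_{\mathbb R^n}w=0$, the right-hand side tends to $0$, while the left-hand side converges to $w(x_0)\ge 0$ as $R\to 0^+$ (actually it is already $\ge w(x_0)$ for every $R>0$). Thus $w(x_0)=0$ for every $x_0\in\mathbb R^n$, i.e. $w\equiv 0$ and $u\equiv m$ is constant. This completes the reduction and the argument.

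The proof is essentially routine once scaling is in place; the only point worth checking carefully is that the Harnack constant is genuinely scale-invariant, which is the case because the coefficients $a_1,\dots,a_n$ (and hence $a^*$ and $\tilde a$) are not affected by the change of variable $x\mapsto x_0+Rx$. I do not anticipate any serious obstacle beyond this verification.
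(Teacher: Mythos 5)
Your proof is correct and is exactly the argument the paper has in mind (it simply cites the "standard" derivation from the Harnack inequality, as in \cite{ARV}, without spelling it out): reduce to the nonnegative case, rescale, use the scale-invariance of the Harnack constant coming from positive one-homogeneity of $\mathcal M_{\bf a}$, and let $R\to\infty$. The duality reduction from "bounded above" to "bounded below" and the observation that $a^*$ and $\tilde a$ are unchanged under ${\bf a}\mapsto{\bf a}'$ are both correctly handled; the only cosmetic slip is the phrase "as $R\to 0^+$" where you clearly mean that $\sup_{Q_{R/2}(x_0)}w\ge w(x_0)$ for every $R$, which the parenthetical already fixes.
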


It is well known that the above Liouville theorem holds in a stronger unilateral version for the Laplace operator in dimension $n=2$ where instead of solutions, bounded above or below, we may consider subsolutions bounded above and supersolutions bounded below. This is due to the fact that the fundamental solutions are of logarithmic type.  See \cite[Theorem 29]{PW}.

On the other hand, this is no longer true in higher dimension. For instance, the function
\begin{equation}\label{fun}
u(x)=\left\{\begin{array}{cc}
-\frac18\left(15-10|x|^2+3|x|^4\right) & \hbox{\rm for} \ |x|\le1\\
-1/|x| & \hbox{\rm for} \ |x|>1
\end{array}\right.
\end{equation}
is a non-constant subharmonic, bounded function in $\mathbb R^3$.  See \cite[Ch.2, Section 12]{PW}.

As well, the unilateral Liouville theorem does not hold for general elliptic operators even in dimension $n=2$. Actually, as soon as $\lambda < \Lambda$ we can find subsolutions $u$, bounded above, of the equation $\mathcal M^{+}_{\lambda,\Lambda} (D^2u)=0$ in $\mathbb R^2$. For instance, the function (\ref{fun}), regarded as a function of $x \in \mathbb R^2$, is a subsolution of the equation $\mathcal M^{+}_{\lambda,2\lambda} (D^2u)=0$ in $\mathbb R^2$. 

Therefore, the uniform ellipticity is not sufficient by itself to guarantee such an unilateral Liouville property, even in dimension $2$.

However, for particular uniformly elliptic operators  as the minimal Pucci operators $\mathcal M^{-}_{\lambda,\Lambda}$, which are suitably smaller than the Laplace operator, precisely when $n \le 1+\Lambda/\lambda$, the Liouville property still holds for subsolutions, bounded above (see \cite{CL}).  We thank Dr. Goffi for drawing our attention to the latter issue during a workshop where the results of this paper have been announced for the first time\footnote{3 Days in Evolution PDEs, 2019 June 21st}.
\vskip0.2cm

We notice here that the same is true for the min-max operator $\mathcal M(X)=\lambda_1(X)+\lambda_n(X)$, and more generally for the operators $\mathcal M_{\bf a}\in \overline{\mathcal A}$ such that $a_1=\hat a_1$. In fact, from Remark \ref{radial}, considering $0<r_1<r_2$ and setting $M(r)=\max_{B_{r}}u$, the function
$$
\varphi(x) = \frac{M(r_1)\log(r_2/|x|)+M(r_2)\log(|x|/r_1)}{\log(r_2/r_1)},
$$
 satisfies the equation $\mathcal M(D^2\varphi)=0$ as linear combination of a constant and $\log |x|$ with non-negative coefficients, by positive homogeneity. 

 Moreover $u(x) \le \varphi(x)$ on the boundary of the annulus $A_{r_1,r_2}=\{r_1 < |x| < r_2\}$.
\vskip0.2cm
From the comparison principle (Theorem \ref{comparison}) in $A_{r_1,r_2}$ then we obtain the {\bf Hadamard Three-Circles Theorem}, according to which $M(r)$ is a convex function of $\log r$:
\begin{equation}\label{3C}
M(r) \le  \frac{M(r_1)\log(r_2/r)+M(r_2)\log(r/r_1)}{\log(r_2/r_1)}, \ \ r_1 \le r \le r_2
\end{equation}

From (\ref{radialsoln}) the same inequality (\ref{3C}) continues to hold for $\mathcal M_{\bf a}$ such that $a_1=\hat a_1$, so that the same Liouville theorem as for the Laplace operator in dimension $n=2$ holds for viscosity subsolutions $u$, bounded above, of the equation $\mathcal M(D^2u)=0$ in all $\mathbb R^n$ . 

\begin{thm} \label{Liouville} {\rm (unilateral Liouville property)} Let $\mathcal M_{\bf a}\in \overline{\mathcal A}$ be such that $a_1=\hat a_1$. Let $u$ be a viscosity subsolution of the equation $\mathcal M(D^2u)=0$ in $\mathbb R^n\backslash\{0\}$, which is bounded above. Then $u$ is constant.
If $u$ is a subsolution in the whole $\mathbb R^n$ bounded above, then the same conclusion holds if $a_1>\hat a_1$. 

On the other hand, suppose  $\mathcal M_{\bf a}\in \overline{\mathcal A}$ such that $a_n=\hat a_n$. Let $v$ be a viscosity supersolution of the equation $\mathcal M(D^2v)=0$ in $\mathbb R^n\backslash\{0\}$ which is bounded below. Then $v$ is constant.
If $u$ is a supersolution in the whole $\mathbb R^n$, bounded below, then the same conclusion holds if $a_n>\hat a_n$. 
\end{thm}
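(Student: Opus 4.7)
The proof rests on the Hadamard three-circles inequality (\ref{3C}) (already derived in the paragraph preceding the statement) combined with the strong maximum principle of Theorem \ref{smp}, plus an analogous power-type three-spheres comparison in the non-critical case $a_1>\hat a_1$. Throughout, set $M(r):=\sup_{|x|=r}u$, which by the weak maximum principle applied on annuli is non-decreasing in $r$.

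\textbf{Critical case $a_1=\hat a_1$ on $\mathbb R^n\setminus\{0\}$.} Inequality (\ref{3C}) says exactly that $M(r)$ is convex as a function of $\log r$, and a convex function of one real variable that is bounded above must be constant. Concretely, fixing $r$ and $r_1$ in (\ref{3C}) and sending $r_2\to\infty$ uses only $M(r_2)\le\sup u<\infty$ to conclude $M(r)\le M(r_1)$; combined with monotonicity this forces $M\equiv M^{*}:=\sup u$ on $(0,\infty)$. For any $r_0>0$, compactness of $\{|x|=r_0\}$ together with upper semicontinuity of $u$ yield an interior point $x_0$ with $u(x_0)=M^{*}$. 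Since $\overline a>0$ and $a_1=\hat a_1$ together force $a_1>0$ (otherwise all $a_i$ would vanish), we have $\mathcal M_{\bf a}\in\mathcal A_1$, and Theorem \ref{smp} gives $u\equiv M^{*}$ in the connected domain $\mathbb R^n\setminus\{0\}$.

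\textbf{Non-critical case $a_1>\hat a_1$ on $\mathbb R^n$.} Here the role of $\log|x|$ is played by the radial solution $|x|^{\gamma_1}$ with $\gamma_1=1-\hat a_1/a_1\in(0,1)$ listed in (\ref{radialsoln}). On each annulus $A_{r_1,r_2}$ I compare $u$ with
\[
\varphi(x)\;=\;\frac{M(r_2)\left(|x|^{\gamma_1}-r_1^{\gamma_1}\right)+M(r_1)\left(r_2^{\gamma_1}-|x|^{\gamma_1}\right)}{r_2^{\gamma_1}-r_1^{\gamma_1}},
\]
which equals $M(r_i)$ on $|x|=r_i$ and whose coefficient of $|x|^{\gamma_1}$ is non-negative by monotonicity of $M$. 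By positive homogeneity $\varphi$ solves $\mathcal M_{\bf a}[\varphi]=0$ in the annulus, so Theorem \ref{comparison} gives $u\le\varphi$ there. Fixing $r_1>0$ and letting $r_2\to\infty$, the boundedness of $M(r_2)$ and $r_2^{\gamma_1}\to\infty$ kill the $|x|^{\gamma_1}$-term and force the constant term to $M(r_1)$, yielding $M(r)\le M(r_1)$ for all $r\ge r_1$ and hence $M\equiv M^{*}$ on $(0,\infty)$. Sending $r_1\to 0^+$ and using upper semicontinuity at the origin gives $u(0)=M^{*}$, so Theorem \ref{smp} concludes $u\equiv M^{*}$ in $\mathbb R^n$.

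\textbf{Supersolutions and the main obstacle.} The supersolution assertions follow by the duality of Subsection \ref{dua5}: if $v$ is a supersolution of $\mathcal M_{\bf a}[v]=0$ bounded below, then $u:=-v$ is a subsolution of $\mathcal M_{{\bf a}'}[u]=0$ with ${\bf a}'=(a_n,\dots,a_1)$, and the hypotheses $a_n=\hat a_n$ or $a_n>\hat a_n$ on $\bf a$ translate into $a'_1=\hat a'_1$ or $a'_1>\hat a'_1$ on $\bf a'$, reducing to the subsolution case already treated. The genuine subtlety is the choice of radial barrier: in the critical regime $\gamma_1=0$ only the logarithmic barrier is available and is intrinsically singular at the origin (hence the restriction to $\mathbb R^n\setminus\{0\}$), while in the non-critical regime the sublinear growth $\gamma_1<1$ is precisely what allows the $r_2\to\infty$ limit to annihilate the slope of $\varphi$ despite $|x|^{\gamma_1}$ itself being unbounded at infinity.
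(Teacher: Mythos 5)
Your proof is correct, and for the critical case $a_1=\hat a_1$ it is essentially the paper's argument: the Hadamard three-circles inequality (\ref{3C}) makes $M$ convex in $\log r$, boundedness above forces $M$ constant (the paper phrases this by taking the limits $r_1\to0^+$, $r_2\to\infty$ in (\ref{3C}); your ``convex bounded above on $\mathbb R$ is constant'' is the same observation in cleaner form), and the strong maximum principle then upgrades constancy of $M$ to constancy of $u$. Your duality reduction of the supersolution half is also identical to the paper's. Where you genuinely diverge from the paper is in the entire non-critical case $a_1>\hat a_1$: the paper does \emph{not} pass through $M(r)$ or the strong maximum principle there, but instead fixes an arbitrary point $x_1$, compares $u$ with the cone-type barrier $u(x_1)+C|x-x_1|^{\gamma}$ (with $\gamma=1-\hat a_1/a_1$) on the punctured ball $B_R(x_1)\setminus\{x_1\}$, and sends $R\to\infty$ so that $C=(M-u(x_1))R^{-\gamma}\to0$, directly yielding $u(x)\le u(x_1)$ for all $x,x_1$ and hence constancy. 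Your route instead runs the power-type three-spheres comparison on origin-centred annuli, concludes $M\equiv M^*$, and then returns to the strong maximum principle. Both are sound; the paper's version is slightly leaner because it bypasses the SMP and treats all centres symmetrically, while yours makes the parallel with the critical case more transparent.

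One imprecision worth fixing: you justify that $M(r)=\sup_{|x|=r}u$ is non-decreasing ``by the weak maximum principle applied on annuli.'' The weak maximum principle on an annulus only gives $M(r)\le\max(M(r_1),M(r_2))$ for $r_1\le r\le r_2$ (no strict interior maximum), which is unimodality, not monotonicity. This does not actually endanger your proof: in the critical case you do not need monotonicity (convexity plus upper boundedness already forces $M$ constant), and in the non-critical case, where you do use it to make the coefficient of $|x|^{\gamma_1}$ in $\varphi$ non-negative, $u$ is an entire subsolution, so the weak maximum principle applied to the \emph{ball} $B_{r_2}$ gives $\max_{\overline B_{r_2}}u=M(r_2)\ge M(r_1)$ directly. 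You should state it that way. Finally, $\gamma_1=1-\hat a_1/a_1$ lies in $(0,1]$ (not $(0,1)$), the endpoint $\gamma_1=1$ occurring when $\hat a_1=0$; the argument still goes through there since $M(r_2)$ bounded still kills the slope as $r_2\to\infty$.
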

{\bf Proof.} Let $\mathcal M_{\bf a}\in \overline{\mathcal A}$ with $a_1= \hat a_1$. Reasoning as in \cite[Section 12]{PW}, we take alternatively the limits as $r_1 \to 0^+$ and $r_2 \to \infty$ in (\ref{3C}). So we get
$$M(r) \le M(r_2) \ \ \hbox{\text for} \ r \le r_2, \ \ M(r) \le M(r_1) \ \ \hbox{\text for} \ r \ge r_1$$
concluding that $M(r_1)=M(r_2)$ for arbitrary pairs of positive numbers $r_1, r_2$.

Then $M(r)$ is constant, and by the strong maximum principle $u$ is in turn a constant function. 

Supposing $a_1>\hat a_1$,  for any arbitrary $x_0 \in \mathbb R^n$ we set $v(x)=u(x_1)+C|x-x_1|^{\gamma_2}$, with $\gamma_2=1-\hat a_1/a_1$ and $C\ge0$ to be determined, recalling that by (\ref{radialsoln}) we  have $\mathcal M_{\bf a}(D^2v)=0$ in $\mathbb R^n\backslash\{0\}$. 

We will compare the entire subsolution bounded above, say $u(x) \le M$, in every punctured ball $B_R(x_1)\backslash\{x_1\}$, noting that $u(x_1)=v(x_1)$ and on  $\partial B_R(x_1)$ we have 
\begin{equation}\label{liou-comp}
u(x) \le M \le u(x_1)+C |x-x_1|^{\gamma_2}
\end{equation}
choosing $C=(M-u(x_1)R^{-\gamma_2}$. 

Using the comparison principle (Theorem \ref{comparison}) we infer that this inequality holds in $B_R(x_1)$. Letting $R \to \infty$, we will have $u(x) \le u(x_1)$ for all $x \in \mathbb R^n$. The same holds true for any other $x_2 \in\mathbb R^n$ so that $u(x_1)=u(x_2)$ for all $x_1,x_2 \in \mathbb R^n$.
\vskip0.2cm

Concerning supersolutions $v$, bounded below, of the same equation in $\mathbb R^n$, it is sufficient to note that by duality  the function $u=-v$  is a subsolution, bounded above, of the equation $\mathcal M_{{\bf a}'}[u]=0$, where ${\bf a}'=(a_n,a_{n-1},\dots,a_1)$, in $\mathbb R^n$, and then to use the result proved before for subsolutions. \qed 
\vskip0.2cm

\end{document}